\documentclass{amsart}
\usepackage[toc,page]{appendix}
\usepackage[utf8]{inputenc}
\usepackage[english]{babel}
\usepackage{blindtext}
\usepackage{amssymb}
\usepackage{enumitem}
\usepackage{amsthm}
\usepackage{amsmath}
\usepackage{caption}
\usepackage{amsfonts}
\usepackage{indentfirst}
\usepackage{comment}
\usepackage{mathtools}
\usepackage{csquotes}
\usepackage{setspace}
\usepackage{physics}
\usepackage{hyperref}
\usepackage{cleveref}
\usepackage{xcolor}
\usepackage[foot]{amsaddr}

\usepackage[
backend=biber,
style=alphabetic,
]{biblatex}
\addbibresource{sample.bib}
\usepackage[left=3cm,right=3cm,top=4.2cm,bottom=4.2cm, a4paper]{geometry}

\setstretch{1.3}

\newtheorem{theorem}{Theorem}
\newtheorem{lemma}[theorem]{Lemma}
\newtheorem{proposition}[theorem]{Proposition}
\newtheorem{corollary}[theorem]{Corollary}
\theoremstyle{definition}

\newtheorem{assumption}[theorem]{Assumption}
\theoremstyle{remark}

\newtheorem*{notation}{Notation}

\title[Kinetic Langevin Dynamics Acceleration]{Appropriate State-Dependent Friction Coefficient Accelerates Kinetic Langevin Dynamics}

\keywords{kinetic Langevin dynamics, Lyapunov analysis, quantitative convergence, nonlinear critical damping}

\author{Keunwoo Lim} 
\address{Department of Applied Mathematics, University of Washington} 

\author{Molei Tao} 
\address{School of Mathematics, Georgia Institute of Technology} 

\email{kwlim@uw.edu, mtao@gatech.edu}

\begin{document}

\begin{abstract}
We consider the convergence of kinetic Langevin dynamics to its ergodic invariant measure, which is Gibbs distribution. Instead of the standard setup where the friction coefficient is a constant scalar, we investigate position-dependent friction coefficient and the possible accelerated convergence it enables. We show that by choosing this coefficient matrix to be $2\sqrt{\text{Hess}\,V}$, convergence is accelerated in the sense that no constant scalar friction coefficient can lead to faster convergence for a large subset of (nonlinear) strongly-convex potential $V$'s. The speed of convergence is quantified in terms of Chi-square divergence from the target distribution, and proved using a Lyapunov approach, based on viewing sampling as optimization in the infinite dimensional space of probability distributions.
\end{abstract}

\maketitle

\section{Introduction}
\noindent
Consider kinetic Langevin dynamics described by a system of stochastic differential equations
\begin{equation}
\begin{cases}
\dd \mathbf{q}_{t} = \mathbf{p}_{t}\, \dd t \\
\dd\mathbf{p}_{t} = -\nabla V(\mathbf{q}_{t})\,\dd t-\Gamma\mathbf{p}_{t}\,\dd t + \Sigma \,\dd\mathbf{W}_{t}.
\end{cases}
\end{equation}
In a physical sense, we understand $\mathbf{q}$ as position, $\mathbf{p}$ as momentum, $V(\mathbf{q})$ as potential function, $\Gamma$ as friction coefficient, and $\Sigma$ as diffusion coefficient. When $\Gamma$ and $\Sigma$ are positive-definite-matrix-valued functions of only $\mathbf{q}$ and $\Sigma(\mathbf{q})=\sqrt{2\Gamma(\mathbf{q})}, \forall \mathbf{q}$, the system admits Gibbs distribution 
\[
    Z^{-1}\exp(-(V(\mathbf{q})+\|\mathbf{p}\|^2/2)) \dd \mathbf{q} \dd \mathbf{p}
\]
as an invariant distribution. Moreover, under reasonable conditions (see later for examples), Gibbs is the unique ergodic distribution of this diffusion process, and $\mathbf{q}_t, \mathbf{p}_t$ converge to it as $t\rightarrow\infty$. Therefore, this dynamics serves as the foundation of many Markov Chain Monte Carlo samplers which discretize the time to construct sampling algorithms, as in \cite{dalalyan_2020, cheng_2018, 19, ma_2021, zhang_2023}. Consequently, its speed of convergence, which is closely related to the efficiency of derived sampling algorithms, is important (together with discretization error; see e.g., \cite{li2022sqrt} for this aspect). We thus focus on analyzing and improving the quantitative convergence rate of this dynamics (no time discretization in this work).

Our improvement is inspired by a simple question: would appropriate choice of state-dependent $\Gamma$, i.e. $\Gamma(\textbf{q})$, possibly accelerate the convergence? The answer is affirmative. We make a specific choice of $\Gamma$ inspired by physical intuitions and quantitatively demonstrate its advantage. The analysis is based on the dynamical evolution of density in the infinite dimensional space of probability density functions, given by the Fokker-Planck equation.

\subsection{Main Results}
Most existing research mainly considered the dynamics where $\Gamma = \lambda I$ for scalar $\lambda>0$. However, in the case of quadratic, positive definite $V$ (i.e. Gaussian sampling), the choice of constant but matrix-valued $\Gamma=2\sqrt{\text{Hess}\, \, V}$ achieves the fastest convergence among all constant matrix-valued $\Gamma$'s \cite{tao2010temperature}. This is the critically damped case in physics terminology, and it was also conjectured in \cite{tao2010temperature} that when $V$ is no longer quadratic but admitting a positive definite Hessian, the choice $\Gamma(\textbf{q})= 2\sqrt{\text{Hess}\, \, V(\textbf{q})}$ would still lead to accelerated convergence, although the meaning of `accelerated' was not quantified. Specifically, particle in nonlinear strongly convex potential spends most of the time near the bottom of the potential well, and it leads to the intuition on quadratic approximation of the potential. We prove the convergence rate in this case is greater than or equal to the $\Gamma =\lambda I$ case for any $\lambda>0$, for a large subset of strongly convex (possibly high-dimensional) $V$'s. 

Our analysis leverage techniques from the seminal work of \cite{ma_2021}, along the line of viewing sampling as optimization in the infinite dimensional space of probability distributions (e.g., \cite{10}). It was known, for example, that overdamped Langevin dynamics corresponds to density evolution that is a gradient flow under Wasserstein-2 metric, where the potential is the Kullback-Leibler (KL) divergence of the distribution of the current state from Gibbs distribution (e.g., \cite{santambrogio2017euclidean}). This potential, therefore, can serve as a Lyapunov function(al) to establish quantitative convergence of the process under reasonable conditions. Kinetic Langevin dynamics, on the other hand, has an additional momentum variable and irreversibility, and it is not known to have a gradient flow structure. Nevertheless, for constant friction coefficients, it has been shown that one can use KL divergence of the joint distribution from the momentum-version Gibbs (e.g., $Z^{-1}\exp(-\|p\|^2/2-V(q))\,dq\,dp$) plus a correction term to construct a Lyapunov function(al) \cite{ma_2021}, which again will allow a quantitative proof of the convergence of kinetic Langevin.

To analyze the convergence of density when the friction coefficient is position-dependent, we introduce a Lyapunov function(al) similar to that in \cite{ma_2021}, which again sums two terms, one being a statistical divergence of joint distribution from target, and another being a Fisher-information-like cross term to account for the fact that kinetic Langevin's Fokker Planck equation admits no known gradient structure. But we replaced  KL divergence in \cite{ma_2021} by $\chi^{2}$-divergence and also modified the cross term in order to obtain a proof that works for our case, where $\Gamma$ is no longer a constant but dependent on $\bf{q}$.

More technical innovations include: 1) when the friction coefficient is not a constant, additional terms absent in the previous research will appear as the Lyapunov function needs to be differentiated. We explicitly evaluate and bound these terms. 2) We provided a sharper convergence rate bound by tuning coefficients in the Lyapunov function. Specifically, we proved that the exponential convergence holds for a parameterized family of Lyapunov functions, and then optimized the convergence rate as a function of parameters of the Lyapunov function. Based on this approach, we obtained the improved convergence rate in \cref{cor17}.

We note that the explicit evaluation of the derivative of our Lyapunov function in \cref{thm5} is valid for any friction coefficient as long as it is a symmetric-matrix-valued function of position, even without needing to be positive definite. Therefore, our approach can also establish the convergence rate for a more general class of friction coefficients. Our main result is \cref{thm1}, which is based on the result of \cref{thm20} and \cref{thm21}. It presents exponential decay of $\chi^{2}$-divergence of probability density function from stationary distribution. $\chi^{2}$-divergence is defined in \cref{chi}.

\begin{theorem}\label{thm1} Consider $2d$-dimensional stochastic differential equation with the potential function  $V(\mathbf{q})\colon\mathbb{R}^{d}\to\mathbb{R}$ and the friction coefficient $\Gamma(\mathbf{q})\colon\mathbb{R}^{d}\to\mathbb{R}^{d\cross d}$ given by
\begin{equation}\label{eq: original}
\begin{cases}
\dd \mathbf{q}_{t} = \mathbf{p}_{t}\, \dd t \\
\dd\mathbf{p}_{t} = -\nabla V(\mathbf{q}_{t})\,\dd t-\Gamma(\mathbf{q}_{t})\mathbf{p}_{t}\,\dd t + \sqrt{2\Gamma(\mathbf{q}_{t})}\,\dd\mathbf{W}_{t}
\end{cases}
\end{equation}
Assume that $\alpha I\preceq \text{Hess}\, V\preceq \beta I$ for $\alpha, \beta>0$ and $\Vert \frac{\partial\sqrt{\text{Hess}\, \, V}}{\partial q_{i}}\Vert_{2} \leq \gamma $ for all $1\leq i\leq d$. Denote the probability density function at time $t$ by $\rho_{t}$. The stationary measure is Gibbs distribution, whose density is given by 
\[
    Z^{-1}\exp(-(V(\mathbf{q})+\|\mathbf{p}\|^2/2)),
\]
denoted by $\pi$. 

When the friction coefficient is given by $\Gamma =2\sqrt{\text{Hess}\, \, V}$,  
\begin{align}
\chi^{2}(\rho_{t}\Vert \pi) \leq C_{1} e^{-\sqrt{\alpha}(\frac{1}{2}-\gamma^{2}m)t}    
\end{align}
holds for some $C_{1} = C_{1}(\rho_{0})$, $C_{2} = C_{2}(\rho_{0})$, $m = m(\alpha, \beta, d)$. Denoting the condition number as $\kappa = \frac{\beta}{\alpha}$, convergence rate is $\frac{\sqrt{\alpha}}{2}-O(\alpha^{-\frac{1}{2}}\kappa^{2}\gamma^{2}d)$.

In particular, considering the case of diagonal quadratic potential $V(\mathbf{q}) = \frac{\alpha}{2}\Vert\mathbf{q}\Vert_{2}^{2}$, for any $\epsilon>0$
\begin{align}
\chi^{2}(\rho_{t}\Vert \pi) \leq C e^{-\sqrt{\alpha}(2-\epsilon)t}    
\end{align}
holds for some $C = C(\rho_{0}, \epsilon)$ independent of $t$. 
\end{theorem}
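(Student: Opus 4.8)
The plan is to construct a parameterized Lyapunov functional of the form $\mathcal{L}[\rho] = \chi^2(\rho\|\pi) + \text{(cross term involving Fisher-information-like quantities)}$, differentiate it along the Fokker–Planck flow, and show the derivative is bounded above by $-c\,\mathcal{L}[\rho]$ for an explicit rate $c$; Grönwall then yields the exponential decay. Concretely, I would first write the Fokker–Planck equation associated to \eqref{eq: original}, split the generator into its reversible (Hamiltonian transport) part and the dissipative (Ornstein–Uhlenbeck in $\mathbf{p}$) part, and record how each acts on $\chi^2(\rho_t\|\pi)$: the transport part is skew and kills the $\chi^2$ term by itself, while the dissipative part contributes a good negative term proportional to $\int \Gamma(\mathbf{q})\,|\nabla_{\mathbf{p}}(\rho/\pi)|^2\,\pi$. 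Since this negative term controls only $\mathbf{p}$-derivatives, it cannot directly close the estimate — this is the familiar hypocoercivity obstruction — so the cross term in $\mathcal{L}$ is there precisely to generate a negative $\mathbf{q}$-derivative contribution when differentiated against the transport part.

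Next I would invoke \cref{thm5}, which by the remark in the excerpt gives an \emph{exact} expression for $\frac{d}{dt}\mathcal{L}[\rho_t]$ valid for any symmetric-matrix-valued $\Gamma(\mathbf{q})$. Specializing to $\Gamma = 2\sqrt{\mathrm{Hess}\,V}$, the terms that in \cite{ma_2021} vanished because $\Gamma$ was constant now survive and involve $\partial_{q_i}\sqrt{\mathrm{Hess}\,V}$; I would bound these using the hypothesis $\|\partial_{q_i}\sqrt{\mathrm{Hess}\,V}\|_2 \le \gamma$, Cauchy–Schwarz, and the spectral bounds $\alpha I \preceq \mathrm{Hess}\,V \preceq \beta I$ (which also give $\sqrt{\alpha}\,I \preceq \sqrt{\mathrm{Hess}\,V} \preceq \sqrt{\beta}\,I$). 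The outcome should be a quadratic form in the "energies" $(\chi^2,\ \mathbf{p}\text{-Fisher term},\ \mathbf{q}\text{-Fisher term})$ whose matrix of coefficients depends on the Lyapunov parameters; requiring this matrix to be negative definite and then optimizing the spectral gap over the free parameters is exactly the tuning step alluded to before \cref{cor17}. This is where the main rate $\frac{\sqrt\alpha}{2} - O(\alpha^{-1/2}\kappa^2\gamma^2 d)$ comes out, with $m = m(\alpha,\beta,d)$ absorbing the dependence of the optimal parameters on the problem data. I expect this optimization — keeping track of all cross terms and showing the perturbation caused by the non-constant $\Gamma$ is genuinely lower order in $\gamma$ — to be the main obstacle; everything else is bookkeeping on the Fokker–Planck integration by parts (and verifying $\pi$ is stationary, which follows from $\Sigma = \sqrt{2\Gamma}$).

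For the quadratic example $V(\mathbf{q}) = \frac{\alpha}{2}\|\mathbf{q}\|_2^2$, I would not go through the general bound but instead exploit that $\mathrm{Hess}\,V = \alpha I$ is constant, so $\gamma = 0$ and all the extra non-constant-$\Gamma$ terms vanish identically. Then $\Gamma = 2\sqrt{\alpha}\,I$ is the classical critically damped scalar case, the dynamics decouples coordinatewise into $d$ independent one-dimensional critically damped Langevin equations, and the Fokker–Planck/Lyapunov computation reduces to a finite-dimensional linear one whose sharp rate is known to be $2\sqrt{\alpha}$ (matching \cite{tao2010temperature}). The $-\epsilon$ arises because at exact critical damping the linear operator is non-diagonalizable (a Jordan block), so the decay picks up a polynomial-in-$t$ prefactor $t\,e^{-2\sqrt\alpha t}$; absorbing this into $C(\rho_0,\epsilon)\,e^{-\sqrt\alpha(2-\epsilon)t}$ gives the stated bound. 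I would present this as a short corollary-style verification, checking that the parameterized Lyapunov functional from \cref{thm20}/\cref{thm21}, evaluated at these parameters, indeed certifies any rate strictly below $2\sqrt\alpha$.
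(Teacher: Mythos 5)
For the general strongly convex case your outline coincides with the paper's argument: the paper builds exactly the Lyapunov functional $\chi^{2}(\rho_t\Vert\pi)+\mathbb{E}_{\pi}[\langle \nabla_{\mathbf{x}}h, S\nabla_{\mathbf{x}}h\rangle]$, computes its exact time derivative for arbitrary symmetric matrix-valued $\Gamma(\mathbf{q})$ in \cref{thm5}, bounds the extra terms carrying $\partial_{q_i}\sqrt{\mathrm{Hess}\,V}$ by $\gamma$ in \cref{cor1}, closes the estimate with Poincar\'e and Gr\"onwall in \cref{thm13}, and optimizes the parameters $s,a,b,c$ in \cref{cor17} to get $m_1=\tfrac{1}{2}$ at $s=2$. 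The one ingredient you omit is the rescaling of $\mathbf{p}$ and time (\cref{prop5}, \cref{cor3}), which normalizes the Poincar\'e constant of the target and is where the overall $\sqrt{\alpha}$ factor in the exponent comes from; without some such normalization the $\alpha$-bookkeeping in the quadratic form does not come out as stated.

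For the diagonal quadratic case your primary route (decouple into $d$ critically damped one-dimensional oscillators, compute the Gaussian $\chi^{2}$ explicitly, absorb the Jordan-block polynomial prefactor into $C(\rho_0,\epsilon)$) is genuinely different from the paper's proof of this clause: the paper obtains the $2-\epsilon$ rate by re-running the Lyapunov argument with a different parameter family ($b=2(a+x)$, $c=\tfrac{1}{2}(a-y)$, $a$ large) so that $M_1+M_2\succeq(2-\epsilon)E$; your explicit-solution computation is what the paper uses separately in \cref{theorem19} for the comparison claim. Two caveats. First, your fallback of ``checking that the parameterized Lyapunov functional from \cref{thm20}/\cref{thm21}, evaluated at these parameters, certifies any rate strictly below $2\sqrt{\alpha}$'' does not work with the parameters of \cref{cor17}: setting $\gamma=0$ there only yields the rate $\tfrac{\sqrt{\alpha}}{2}$, a factor of $4$ short of $2\sqrt{\alpha}$; the near-degenerate limit in which $S$ approaches singularity is an essential additional step, not a plug-in. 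Second, the explicit-solution route gives a polynomial-times-exponential bound for a fixed initial point $X_0$; to obtain a constant $C(\rho_0,\epsilon)$ for a general initial law you need a further argument (e.g.\ convexity of $\chi^{2}$ together with uniform control of the prefactor over the support of $\rho_0$), which the paper sidesteps by proving \cref{thm21} through the Lyapunov route.
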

In \cref{section2}, we introduce basic notations and assumptions, then rewrite the dynamics in a canonical form via appropriate rescaling. In \cref{sec3}, we analyze dynamics in canonical form with friction coefficient $\Gamma = 2\sqrt{\text{Hess}\, \, V}$ and obtain the exponential convergence of Lyapunov function. Lastly, in \cref{app1}, we compare our result with the result of constant scalar friction coefficient case, then analyze the convergence of a dynamics with diagonal quadratic potential function for the support of a claim. 

\subsection{Related Work}
\emph{(in this description, `dynamics' will be in continuous time and `algorithm' will be in discrete time)}
Non-asymptotic sampling error bounds of kinetic Langevin Monte Carlo algorithms (KLMC) have been established in \cite{cheng_2018, dalalyan_2020, zhang_2023, ma_2021} and dimension and accuracy dependence have been quantified. In \cite{26}, composition of synchronous and reflection coupling method is applied to prove the convergence of underdamped Langevin dynamics (ULD). In \cite{8}, improved $L^{2}$-distance convergence for ULD is obtained with functional inequalities. Variant of ULD and KLMC based on numerical approach are introduced in \cite{28, 19}, and further references include \cite{li2022sqrt, 24, 33, 32}. We also note the works on overdamped Langevin dynamics (OLD) and Langevin Monte Carlo (LMC) methods. Starting from the research of \cite{31, durmus2016sampling, durmus2017nonasymptotic}, non-asymptotic error bounds of LMC methods continue to be analyzed \cite{34, 25, 14, durmus2019analysis}. The convexity assumptions could be relaxed inside a ball \cite{Cheng2020} or relaxed by isoperimetric inequality assumptions \cite{chewi2024book}. 

Lyapunov methods provide powerful approaches to understand the convergence of stochastic processes including Langevin dynamics. The classical approach employs a Lyapunov function defined in the state space, which provides a stochastic quantity that monitors the convergence of the SDE. For example, in \cite{5}, the ergodicity of the solution of stochastic differential equation and its discretizations are examined based on Markov chain theory on general state space, specifically for kinetic Langevin dynamics based on a Lyapunov method. More results in this approach are presented in, e.g., \cite{16, 17, 18}. A later approach, on the other hand, considers Lyapunov functions defined in the space of probability measures; they are deterministic quantities that monitor the convergence of the Fokker-Planck PDE instead. This approach corresponds to the perspective of viewing sampling as optimization in the space of probability distributions. For instance, one such result was presented in \cite{ma_2021}, where Lyapunov analyses based on entropic hypocoercivity \cite{villani2009hypocoercivity} were conducted to quantify the convergences of both kinetic Langevin (the continuous process) and its discretization. We also refer to \cite{feng2021hypoelliptic} for some Lyapunov-based results for kinetic Langevin dynamics, which may not be tight but considered variable friction coefficient in the scalar case without focusing on our choice of $\Gamma$ or its acceleration, to \cite{leimkuhler2020hypocoercivity} which employed clever thermostats techniques to choose friction coefficient based on the introduction of an additional variable, and to \cite{chak2023optimal} for a different setup where beautiful analysis is provided to study friction coefficient optimal in the sense of asymptotic variance, as opposed to the sense of convergence rate considered in this article.

An additional remark is, our Lyapunov analysis of continuous time sampling dynamics is closely related to that of continuous time dynamics for optimization. Some seminal work that connect the two include \cite{14,10,25,31}, although overdamped Langevin was the primary consideration. The acceleration effect of introducing momentum in optimization has been analyzed in \cite{9}. The sampling analogue was considered in \cite{ma_2021}, and related notions can trace back to at least entropic hypocoercivity in \cite{villani2009hypocoercivity}. We also note \cite{chak2023optimal} that aimed to find the optimal constant friction matrix that minimizes asymptotic variances of expectation of observables, which is different from our method that minimizes the $\chi^{2}$-divergence between the density and the stationary distribution.

\subsection{Comparison with previous results} Several previous results related to our research are summarized in \cref{table1}. Here, we assume that $\alpha I\preceq\text{Hess}\, \, V \preceq \beta I$ and denote condition number as $\kappa = \frac{\beta}{\alpha}$. Condition $-\gamma I \preceq\frac{\partial\sqrt{\text{Hess}\, \, V}}{\partial q_{i}} \preceq \gamma I$ is required only for our case to analyze the derivative of friction coefficient. We note that our choice of friction coefficient $\Gamma = 2\sqrt{\text{Hess}\, \, V}$ is unique compared to previous works that considered friction coefficient as identity matrix.

    \begin{table}[tbhp]
    \footnotesize
    \caption{Convergence rate of ULD under the condition $\alpha I\preceq\text{Hess}\, \, V \preceq \beta I$}\label{table1}
    \begin{center}
    \begin{tabular}[t]{|c|c|c|c|} 
    \hline
    Source & Convergence Rate & Friction $\Gamma$ & Metric \\ [0.5ex]
    \hline\hline
    Dalalyan and Riou-Durand \cite{dalalyan_2020} & $O(\sqrt{\frac{\alpha}{\kappa}})$ & $O(\sqrt{\beta}) I$& $W_{2}$ \\
    \hline
    Ma et al. \cite{ma_2021}, after rescaling &$O(\sqrt{\frac{\alpha}{\kappa}})$ & $O(\sqrt{\beta}) I$ & twisted KL \\
    \hline
    Cao et al. \cite{8} & $O(\sqrt{\alpha})$ & $O(\sqrt{\alpha}) I$ & $L^{2}$ \\
    \hline
    This paper &$\frac{\sqrt{\alpha}}{2} - O(\alpha^{-\frac{1}{2}}\kappa^{2}\gamma^{2}d)$& $2\sqrt{\text{Hess}\, \, V}$ & $\chi^{2}$ \\
    \hline
    \end{tabular}
    \end{center}
    \end{table}
First observation is that our result can be interpreted as extension from the results of $\Gamma = O(\sqrt{\beta}) I$ and $\Gamma = O(\sqrt{\alpha})I$ cases. Since the smallest eigenvalue of the friction coefficient $\Gamma = 2\sqrt{\text{Hess}\, \, V}$ is $O(\sqrt{\alpha})$ and largest eigenvalue is $O(\sqrt{\beta})$, we may view that our dynamics have the properties of both $\Gamma = O(\sqrt{\alpha})I$ and $\Gamma = O(\sqrt{\beta}) I$ dynamics. It coincides with the fact that convergence rate of our dynamics is $\frac{\sqrt{\alpha}}{2} - O(\alpha^{-\frac{1}{2}}\kappa^{2}\gamma^{2}d)$, which is between $O(\sqrt{\frac{\alpha}{\kappa}})$ and $O(\sqrt{\alpha})$ under reasonable conditions.

We discuss our result in the perspective of our intuition. When the potential function is diagonal quadratic, $\Gamma = 2\sqrt{\text{Hess}\, \, V}$ case have equal or better convergence result than $\Gamma = \lambda I$ case, and the equality holds for the friction coefficient $(2\sqrt{\alpha}) I$. We compare our result for $\Gamma = 2\sqrt{\text{Hess}\, \, V}$ case with the results of two cases of friction coefficients $O(\sqrt{\alpha}) I$ and $O(\sqrt{\beta}) I$, although the measures are not explicitly comparable, and these matchings are only providing handwavy sanity check. As long as $\alpha^{-1}\kappa^{2}\gamma^{2}d$ is small, the order of our convergence rate matches $O(\sqrt{\alpha})$ which is the best outcome as in \cite{8} that the coefficient is not explicitly determined. Also, our result has better condition number dependence than the results with the friction coefficient $O(\sqrt{\beta}) I$ as considered in \cite{dalalyan_2020} and \cite{ma_2021}.

For more explicit analysis, we compared our result with the recent explicit convergence rate result of constant scalar friction coefficient case. We referred the result in \cite[Theorem 4]{8} obtained by DMS hypocoercive estimation approach originated from \cite{32}. In \cref{thm17}, we directly compared the two convergence rates and proved that our convergence rate is higher than the convergence rate of any constant scalar friction coefficient case when $\alpha^{-1}\kappa^{2}\gamma^{2}d$ is small. 

We applied our approach to the diagonal quadratic potential case also considered in \cite[Theorem 3]{8} and obtained a convergence result in \cref{thm21} which is very close to the optimal rate. Moreover, in \cref{theorem19}, we gave explicit comparison for the dynamics with diagonal quadratic potential function as a special case to support the claim.

Finally, we note that our choice of friction coefficient $\Gamma = 2\sqrt{\text{Hess}\, \, V}$ can be interpreted as nonlinear version of critically damped Langevin dynamics. This is because when the potential $V$ is diagonal quadratic, kinetic Langevin with $\Gamma = 2\sqrt{\text{Hess}\, \, V}$ is critically damped in the classical sense and fastest convergent.
\section{Preliminaries}\label{section2}
\begin{notation}
For the function $v\colon\mathbb{R}^{k}\to\mathbb{R}^{k}$, we define the gradient $\nabla v\in \mathbb{R}^{k\times k}$ and the divergence $\nabla\cdot v\in\mathbb{R}$ as 
\begin{align}
(\nabla v)_{ij} = \frac{\partial v_{i}}{\partial x_{j}}\text{ and }\nabla\cdot v= \sum_{i}\frac{\partial v_{i}}{\partial x_{i}}.    
\end{align}
We denote the gradient operator respect to $\mathbf{x}$ as $\nabla_{\mathbf{x}}$, respect to $\mathbf{q}$ as $\nabla_{\mathbf{q}}$, and respect to $\mathbf{p}$ as $\nabla_{\mathbf{p}}$. Then, for the function $M\colon\mathbb{R}^{k}\to\mathbb{R}^{k\times k}$, we define the divergence $\nabla\cdot M \in \mathbb{R}^{k}$ as \begin{align}
(\nabla\cdot M)_{i} = \sum_{j}\frac{\partial M_{ij}}{\partial x_{j}}.    
\end{align}

Next, we define an adjoint operator of gradient with respect to stationary distribution. Letting the stationary distribution of the process as $\pi(\mathbf{q},\mathbf{p})$ and $\mathbf{x}= (\mathbf{q},\mathbf{p})$, we define the adjoint operator of $\nabla_{\mathbf{x}}$ with respect to inner product $\mathbb{E}_{\pi}[\langle\cdot,\cdot\rangle]$ as $(\nabla_{\mathbf{x}}\cdot)^{*}$ and denote it by $(\nabla_{\mathbf{x}})^{*}$, absuing the notation for brievity. Note that the stationary distribution is a Gibbs measure $\pi(\mathbf{q}, \mathbf{p}) =\frac{1}{Z}\exp(-V(\mathbf{q})-\Vert \mathbf{p} \Vert^{2}/2)$
where $Z$ is a normalizing constant. The operation $(\nabla_{\mathbf{x}})^{*}A$ for $2d\times 2d$ matrix $A$ is defined as $2d$-dimensional vector with entries 
\begin{align}
\big((\nabla_{\mathbf{x}})^{*}A\big)_{i}=\sum_{j}(\frac{\partial}{\partial x_{j}})^{*}A_{ij}.    
\end{align}
The operations of $(\nabla_{\mathbf{q}})^{*}$ and $(\nabla_{\mathbf{p}})^{*}$ are similarly defined for the matrix case.

Then, for a function $h\colon \mathbb{R}^{2d}\to\mathbb{R}$, we define an operator $B[h]$ as
\begin{align}
B[h] =(\nabla_{\mathbf{p}})^{*}\nabla_{\mathbf{q}}h-(\nabla_{\mathbf{q}})^{*}\nabla_{\mathbf{p}}h.    
\end{align}
Since the operator $(\nabla_{\mathbf{x}})^{*}$ is given as
$(\nabla_{\mathbf{x}})^{*} = -\nabla_{\mathbf{x}}^\top - (\nabla_{\mathbf{x}}\log\pi)^\top$, $(\nabla_{\mathbf{q}})^{*} = -\nabla_{\mathbf{q}}^\top+\alpha\nabla V(\mathbf{q})^\top$ and $(\nabla_{\mathbf{p}})^{*} = -\nabla_{\mathbf{p}}^\top+\alpha\mathbf{p}^\top$, $B[h]$ is evaluated as $\alpha\mathbf{p}^{\top}\cdot\nabla_{\mathbf{q}}h-\alpha\nabla V(\mathbf{q})^{\top}\cdot\nabla_{\mathbf{p}}h$. Additionally, $[\nabla_{\mathbf{p}}, B][h] =\alpha\nabla_{\mathbf{q}}h$ and $[\nabla_{\mathbf{q}}, B][h] =-\alpha\text{Hess}\,V\nabla_{\mathbf{p}}h$ holds.
For two operators $G, H$, we define a commutator of two operators as
\begin{align}
[G, H]= GH-HG.    
\end{align}

We use $\chi^{2}$-divergence as the metric of two probability distributions $\mu$ and $\nu$, which is defined as 
\begin{align}\label{chi}
    \chi^{2}(\mu \Vert \nu) = \int\big(\frac{\dd \mu}{\dd \nu}-1\big)^{2}\, \dd \nu.
\end{align}
\end{notation}
Then, based on the notation, we introduce the lemma widely applied in the evaluation.
\begin{lemma}
The friction coefficient $\Gamma(\mathbf{q})$ and the operator $(\nabla_{\mathbf{p}})^{*}$ commute, i.e. for $d\cross d$ matrix $A$, $ (\nabla_{\mathbf{p}})^{*}\Gamma A = \Gamma (\nabla_{\mathbf{p}})^{*} A$ holds.
\end{lemma}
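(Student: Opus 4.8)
The plan is to expand both sides of the claimed identity into components and to use the single structural fact at hand: $\Gamma$ is a function of $\mathbf{q}$ alone. First I would recall from the Notation that, for a $d\times d$ matrix-valued function $M$, the operator acts entrywise as $\big((\nabla_{\mathbf{p}})^{*}M\big)_{i}=\sum_{j}(\partial/\partial p_{j})^{*}M_{ij}$, and that the scalar adjoint is $(\partial/\partial p_{j})^{*}=-\partial/\partial p_{j}+\alpha p_{j}$, a sum of differentiation in $p_{j}$ and multiplication by a function of $\mathbf{p}$ only. The key observation, which I would state as the heart of the argument, is that each of these two pieces commutes with multiplication by the entry $\Gamma_{ik}(\mathbf{q})$: differentiation because $\partial\Gamma_{ik}/\partial p_{j}=0$ lets the product rule carry $\Gamma_{ik}$ outside the derivative, and multiplication by $\alpha p_{j}$ trivially since it is scalar.

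Next I would assemble these pieces. Writing $(\Gamma A)_{ij}=\sum_{k}\Gamma_{ik}A_{kj}$ and using linearity of $(\partial/\partial p_{j})^{*}$, one obtains
\[
\big((\nabla_{\mathbf{p}})^{*}(\Gamma A)\big)_{i}=\sum_{j,k}(\partial/\partial p_{j})^{*}\big(\Gamma_{ik}A_{kj}\big)=\sum_{j,k}\Gamma_{ik}\,(\partial/\partial p_{j})^{*}A_{kj}=\sum_{k}\Gamma_{ik}\big((\nabla_{\mathbf{p}})^{*}A\big)_{k},
\]
and the right-hand side is exactly the $i$-th component of $\Gamma(\nabla_{\mathbf{p}})^{*}A$, which finishes the proof.

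I do not expect a genuine obstacle in this lemma; the only thing that needs care is the index bookkeeping --- checking that $\Gamma$ is contracted against the first index of $A$ both before and after the operator is applied, so that it really is left multiplication that is preserved --- together with being explicit that the adjoint is taken with respect to the full Gibbs measure $\pi(\mathbf{q},\mathbf{p})$, so that $(\partial/\partial p_{j})^{*}$ involves $\mathbf{p}$ only. That last point is exactly what makes the $\mathbf{q}$-dependence of $\Gamma$ inert, and the same computation shows more generally that $(\nabla_{\mathbf{p}})^{*}$ commutes with left multiplication by any matrix-valued function of $\mathbf{q}$.
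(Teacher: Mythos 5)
Your proof is correct and follows essentially the same route as the paper: both expand $(\nabla_{\mathbf{p}})^{*}(\Gamma A)$ componentwise, apply $(\partial/\partial p_{j})^{*}=-\partial/\partial p_{j}+\alpha p_{j}$ to $\sum_{k}\Gamma_{ik}A_{kj}$, and pull $\Gamma_{ik}$ out using $\partial\Gamma_{ik}/\partial p_{j}=0$. No gap; your remark that the argument works for left multiplication by any matrix-valued function of $\mathbf{q}$ is a harmless (and accurate) generalization.
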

\begin{proof}
By direct computation,
\begin{align}
((\nabla_{\mathbf{p}})^{*}\Gamma A)_{i} &=\sum_{j}\alpha p_{j}(\Gamma A)_{ij}-\frac{\partial(\Gamma A)_{ij}}{\partial p_{j}} \\&=\sum_{k}\Gamma_{ik}(\alpha\sum_{j}p_{j}A_{kj}-\sum_{k}\frac{\partial A_{kj}}{\partial p_{j}}) = (\Gamma(\nabla_{\mathbf{p}})^{*}A)_{i}
\end{align}
holds.
\end{proof}

Then, we state the well known Bakry-Emery criterion for strongly convex function (see \cite[Section 4.5]{pavliotis2014stochastic}). We refer PI as Poincar\'e's inequality for convenience. We note that the Bakry-Emery criterion holds for the potential function $V$, which is strongly convex.
\begin{proposition}[Bakry-Emery Criterion]\label{prop33} When a function $U(x)$ satisfies 
\begin{align}
 \text{Hess}\, U \succeq \nu I
\end{align}
for $\nu>0$, then the probability measure $\mu(dx) = \frac{1}{Z}e^{-U} dx$ with normalizing constant $Z$ satisfies PI with constant $\nu$, that is
\begin{align}
\int(f - \int f d\mu)^{2}\leq \frac{1}{\nu} \int \Vert \nabla f \Vert^{2} \dd \mu
\end{align}
holds for function $f\in L^{2}(\mathbb{R}^{d},\mu) \cap C^{2}(\mathbb{R}^{d})$.
\end{proposition}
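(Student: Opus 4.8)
The plan is to prove PI by the classical Bakry--Émery $\Gamma_2$-calculus, working with the Markov semigroup $(P_t)_{t\ge0}$ generated by $L = \Delta - \nabla U\cdot\nabla$, which is reversible and ergodic with respect to $\mu = \frac1Z e^{-U}\,dx$. First I would record the variance-as-Dirichlet-energy identity: for $f$ in a convenient core (say $C_c^\infty$, then extended by density), since $P_tf\to\int f\,d\mu$ in $L^2(\mu)$ as $t\to\infty$ and $\frac{d}{dt}\|P_tf\|_{L^2(\mu)}^2 = 2\langle LP_tf,P_tf\rangle_{L^2(\mu)} = -2\int\|\nabla P_tf\|^2\,d\mu$ by integration by parts against $\mu$,
\[
\int\Big(f-\int f\,d\mu\Big)^2 d\mu \;=\; -\int_0^\infty \frac{d}{dt}\,\|P_tf\|_{L^2(\mu)}^2\,dt \;=\; 2\int_0^\infty \int \|\nabla P_tf\|^2\,d\mu\,dt .
\]

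The heart of the argument is a pointwise gradient-contraction estimate $\|\nabla P_tf\|^2 \le e^{-2\nu t}\,P_t\big(\|\nabla f\|^2\big)$. To get it I would fix $t>0$, set $\psi(s):=P_s\big(\|\nabla P_{t-s}f\|^2\big)$ for $s\in[0,t]$, and differentiate, using $\frac{d}{du}\|\nabla P_uf\|^2 = 2\langle\nabla P_uf,\nabla LP_uf\rangle$:
\[
\psi'(s) = P_s\Big(L\|\nabla P_{t-s}f\|^2 - 2\langle\nabla P_{t-s}f,\nabla LP_{t-s}f\rangle\Big) = 2\,P_s\big(\Gamma_2(P_{t-s}f)\big),
\]
where $\Gamma_2(g):=\tfrac12 L\|\nabla g\|^2 - \langle\nabla g,\nabla Lg\rangle$ is the iterated carré du champ (not to be confused with the friction matrix). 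The Bochner identity for $L=\Delta-\nabla U\cdot\nabla$ gives $\Gamma_2(g) = \|\text{Hess}\,g\|_{\mathrm{HS}}^2 + \nabla g^\top(\text{Hess}\,U)\nabla g$, and the hypothesis $\text{Hess}\,U\succeq\nu I$ yields $\Gamma_2(g)\ge\nu\|\nabla g\|^2$. Hence $\psi'(s)\ge 2\nu\,\psi(s)$, so $s\mapsto e^{-2\nu s}\psi(s)$ is nondecreasing, and comparing $s=0$ with $s=t$ gives $\|\nabla P_tf\|^2=\psi(0)\le e^{-2\nu t}\psi(t)=e^{-2\nu t}P_t(\|\nabla f\|^2)$.

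It then remains to integrate against $\mu$ and combine. By invariance of $\mu$ under $P_t$, $\int P_t(\|\nabla f\|^2)\,d\mu = \int\|\nabla f\|^2\,d\mu$, so $\int\|\nabla P_tf\|^2\,d\mu \le e^{-2\nu t}\int\|\nabla f\|^2\,d\mu$; inserting this into the variance identity,
\[
\int\Big(f-\int f\,d\mu\Big)^2 d\mu \;\le\; 2\Big(\int_0^\infty e^{-2\nu t}\,dt\Big)\int\|\nabla f\|^2\,d\mu \;=\; \frac{1}{\nu}\int\|\nabla f\|^2\,d\mu,
\]
which is exactly PI with constant $\nu$. The main obstacle is analytic rather than algebraic: one must justify the semigroup manipulations (regularity of $P_tf$, differentiation under $P_s$, the pointwise Bochner identity, and the $L^2(\mu)$-ergodicity $P_tf\to\int f\,d\mu$). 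I would handle this by first establishing everything on a core of smooth, rapidly decaying functions---legitimate because $\text{Hess}\,U\succeq\nu I$ forces $\mu$ to be log-concave with light tails and $L$ to be essentially self-adjoint there---and then extending to all $f\in L^2(\mathbb{R}^d,\mu)\cap C^2(\mathbb{R}^d)$ by density together with lower semicontinuity of the Dirichlet form. As an alternative route I would note that the sharper Brascamp--Lieb inequality $\operatorname{Var}_\mu(f)\le\int\nabla f^\top(\text{Hess}\,U)^{-1}\nabla f\,d\mu$ implies the claim at once, since $(\text{Hess}\,U)^{-1}\preceq\nu^{-1}I$.
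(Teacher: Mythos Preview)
Your proof is correct and is the standard Bakry--\'Emery semigroup argument: the variance identity via $-\frac{d}{dt}\|P_tf\|_{L^2(\mu)}^2$, the interpolation $\psi(s)=P_s(\|\nabla P_{t-s}f\|^2)$, the Bochner formula $\Gamma_2(g)=\|\text{Hess}\,g\|_{\mathrm{HS}}^2+\nabla g^\top(\text{Hess}\,U)\nabla g\ge\nu\|\nabla g\|^2$, and the resulting gradient contraction $\|\nabla P_tf\|^2\le e^{-2\nu t}P_t(\|\nabla f\|^2)$ are all assembled correctly, and the alternative route via Brascamp--Lieb is also sound.

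There is nothing to compare against, however: the paper does not prove this proposition. It is stated as a well-known background result with a pointer to \cite[Section 4.5]{pavliotis2014stochastic} and is used only as a black box (to pass from a weighted $\|\nabla(\rho_t/\pi)\|^2$ term to $\chi^2(\rho_t\Vert\pi)$ in the Gr\"onwall step of \cref{thm13}). So your write-up supplies a full proof where the paper simply cites one; what you have is exactly the textbook argument that the cited reference would give.
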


We also note the core component of our analysis, which is the Cayley-Hamilton Theorem. 
\begin{proposition}[Cayley-Hamilton Theorem]\label{thm: cayley}
The set of the eigenvalues of the square matrix $A$ is equal to the set of the roots of the charateristic polynomial of $A$. 
\end{proposition}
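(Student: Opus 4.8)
\textbf{Proof proposal for the Cayley--Hamilton Theorem statement (Proposition \ref{thm: cayley}).}

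The statement as worded is actually the standard characterization of eigenvalues via the characteristic polynomial, so the plan is to prove that for a square matrix $A \in \mathbb{R}^{n \times n}$ (or $\mathbb{C}^{n \times n}$), a scalar $\lambda$ is an eigenvalue of $A$ if and only if $\lambda$ is a root of the characteristic polynomial $p_A(t) = \det(tI - A)$. First I would recall the definition: $\lambda$ is an eigenvalue of $A$ precisely when there exists a nonzero vector $v$ with $Av = \lambda v$, equivalently $(\lambda I - A)v = 0$, equivalently the linear map $\lambda I - A$ has nontrivial kernel. The key algebraic fact to invoke is that a square matrix $M$ has nontrivial kernel if and only if $M$ is singular, i.e. $\det M = 0$; this follows from the rank-nullity theorem together with the basic property of the determinant that $\det M \neq 0$ iff $M$ is invertible.

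The argument then proceeds in two directions. For the forward direction, suppose $\lambda$ is an eigenvalue; pick an eigenvector $v \neq 0$, so $(\lambda I - A)v = 0$, hence $\lambda I - A$ is singular, hence $\det(\lambda I - A) = 0$, i.e. $p_A(\lambda) = 0$. For the reverse direction, suppose $p_A(\lambda) = \det(\lambda I - A) = 0$; then $\lambda I - A$ is singular, so by rank-nullity its kernel is nontrivial, so there exists $v \neq 0$ with $(\lambda I - A)v = 0$, i.e. $Av = \lambda v$, which makes $\lambda$ an eigenvalue. Since both inclusions hold, the set of eigenvalues equals the set of roots of $p_A$, as claimed.

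There is essentially no obstacle here: the entire content is the equivalence ``$\det M = 0 \iff M$ singular $\iff \ker M \neq \{0\}$,'' which is standard linear algebra and could simply be cited (e.g. from any standard reference such as \cite{pavliotis2014stochastic} or a linear algebra text). If one wishes to be fully self-contained, the only point worth a sentence is why $\det M = 0$ implies $M$ is not invertible: if $M$ were invertible with inverse $M^{-1}$, then $\det M \cdot \det M^{-1} = \det(MM^{-1}) = \det I = 1$, forcing $\det M \neq 0$, a contradiction; conversely an invertible $M$ visibly has trivial kernel since $Mv = 0 \Rightarrow v = M^{-1}Mv = 0$. The remaining step --- that a non-invertible square matrix has nontrivial kernel --- is exactly rank-nullity. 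Because the paper only uses this proposition as a black-box tool to pass between spectra of the drift matrices and roots of low-degree polynomials, I would keep the proof to three or four lines and not belabor the foundational determinant theory.
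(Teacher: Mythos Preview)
Your proof is correct. The paper itself provides no proof of this proposition; it is simply stated as a standard fact and then invoked later (in the proof of \cref{thm13}) to read off when the $2\times 2$ blocks $G'_{1i}$ and $G'_{2i}$ have nonnegative eigenvalues via their trace and determinant. So there is nothing in the paper to compare your argument against --- your three-line determinant/kernel argument is exactly the standard justification and is entirely adequate.

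One minor observation you already made implicitly: the proposition is mislabeled. What is stated (eigenvalues $=$ roots of the characteristic polynomial) is not the Cayley--Hamilton theorem but rather the elementary spectral characterization; the actual Cayley--Hamilton theorem asserts $p_A(A)=0$. Since the paper only ever uses the statement as written --- namely, that a symmetric $2\times 2$ matrix is positive semidefinite iff its characteristic polynomial has nonnegative roots, i.e.\ nonnegative trace and determinant --- your proof covers exactly what is needed and the mislabeling is harmless.
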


Now, we rescale the original dynamics \cref{eq: original} to revise the Poincar\'e constant of stationary distribution for effective computation. Since the potential function is $\alpha$-strongly convex, we rescale the variable $\mathbf{p}$ and associated variables to make the potential function of the gibbs measure be $\alpha$-strongly convex. We will analyze the rescaled dynamics \cref{eq: rescaled} and prove the convergence in \cref{sec3}.
\begin{proposition}\label{prop5}
By the proper rescaling of the variables of original dynamics \cref{eq: original}, we could obtain the rescaled dynamics \cref{eq: rescaled} with the stationary distribution having PI constant as $\alpha$.
\begin{equation}\label{eq: rescaled}
\begin{cases}
\dd \mathbf{q}_{t} = \mathbf{p}_{t}\, \dd t \\
\dd\mathbf{p}_{t} = -\nabla V(\mathbf{q}_{t})\,\dd t-\Gamma(\mathbf{q}_{t})\mathbf{p}_{t}\,\dd t + \sqrt{2\alpha^{-1}\Gamma(\mathbf{q}_{t})}\,\dd\mathbf{W}_{t}
\end{cases}
\end{equation}
\end{proposition}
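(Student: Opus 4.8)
The plan is to carry out the change of variables explicitly, identify the invariant measure of the resulting system by a direct Fokker--Planck computation, and then read off its Poincar\'e constant from the Bakry--\'Emery criterion. For the rescaling I would leave $\mathbf q$ unchanged, set $\tilde{\mathbf p} = \alpha^{-1/2}\mathbf p$, and rescale time by $\tilde t = \sqrt\alpha\, t$ (so the driving Wiener increment acquires a factor $\alpha^{-1/4}$ in law). Substituting into \cref{eq: original} and dividing through by the leading scalar, the $\mathbf q$-equation keeps the form $\dd\tilde{\mathbf q} = \tilde{\mathbf p}\,\dd\tilde t$, and the $\mathbf p$-equation becomes $\dd\tilde{\mathbf p} = -\alpha^{-1}\nabla V\,\dd\tilde t - \alpha^{-1/2}\Gamma\tilde{\mathbf p}\,\dd\tilde t + \sqrt{2\alpha^{-3/2}\Gamma}\,\dd\tilde{\mathbf W}$; absorbing the scalar prefactors into a relabelled potential ($V \mapsto \alpha^{-1}V$) and friction ($\Gamma \mapsto \alpha^{-1/2}\Gamma$) --- both still symmetric positive definite and obeying the standing assumptions, now with the strong-convexity constant of $V$ normalised to $\text{Hess}\,V \succeq I$ and the identity $\Gamma = 2\sqrt{\text{Hess}\,V}$ preserved in the relevant case --- recovers exactly \cref{eq: rescaled}. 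Put differently, the only net effect is that the fluctuation--dissipation-matched diffusion $\sqrt{2\Gamma}$ is replaced by $\sqrt{2\alpha^{-1}\Gamma}$, i.e.\ the dynamics is run at inverse temperature $\alpha$.

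Next I would verify that the invariant measure of \cref{eq: rescaled} is $\pi \propto \exp(-\alpha(V(\mathbf q) + \tfrac12\|\mathbf p\|^2))$. Splitting its generator as $\mathcal L = \mathcal L_{\mathrm H} + \mathcal L_{\mathrm{OU}}$ with $\mathcal L_{\mathrm H}f = \mathbf p\cdot\nabla_{\mathbf q}f - \nabla V\cdot\nabla_{\mathbf p}f$ and $\mathcal L_{\mathrm{OU}}f = -(\Gamma\mathbf p)\cdot\nabla_{\mathbf p}f + \alpha^{-1}\sum_{i,j}\Gamma_{ij}\partial_{p_i}\partial_{p_j}f$, I would check $\mathcal L^{*}\pi = 0$ termwise. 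The divergence-form adjoint $\mathcal L_{\mathrm H}^{*}$ annihilates $\pi$ because the Hamiltonian drift $(\mathbf p, -\nabla V)$ is divergence free and tangent to the level sets of $V(\mathbf q) + \tfrac12\|\mathbf p\|^2$, hence of any function of it; the adjoint $\mathcal L_{\mathrm{OU}}^{*}$ annihilates $\pi$ because, for $\mathbf q$ frozen, $\mathcal L_{\mathrm{OU}}$ generates an Ornstein--Uhlenbeck process with drift matrix $\Gamma$ and diffusion matrix $2\alpha^{-1}\Gamma$ whose stationary Gaussian is $\mathcal N(0,\alpha^{-1}I)$ --- this is the Lyapunov identity $\Gamma(\alpha^{-1}I) + (\alpha^{-1}I)\Gamma^{\top} = 2\alpha^{-1}\Gamma$, which holds since $\Gamma = \Gamma^{\top}$ and which the commutation of $\Gamma(\mathbf q)$ with $(\nabla_{\mathbf p})^{*}$ established above reduces to one line. (Alternatively one may simply cite the standard fact that kinetic Langevin with $\Sigma\Sigma^{\top} = 2\alpha^{-1}\Gamma$, friction $\Gamma(\mathbf q)$, and force $-\nabla V$ admits this Gibbs measure as its unique ergodic invariant law.)

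Finally, the potential of $\pi$ is $U(\mathbf q,\mathbf p) = \alpha V(\mathbf q) + \tfrac\alpha2\|\mathbf p\|^2$, so $\text{Hess}\,U = \alpha\,\mathrm{diag}(\text{Hess}\,V,\, I) \succeq \alpha I$ since $\text{Hess}\,V \succeq I$ after the rescaling; hence $U$ is $\alpha$-strongly convex and the Bakry--\'Emery criterion (\cref{prop33}) gives that $\pi$ obeys the Poincar\'e inequality with constant $\alpha$, which is the assertion. I expect the only step with real content to be the verification of the invariant measure, and within it the fluctuation--dissipation cancellation for the $\mathbf p$-Ornstein--Uhlenbeck part; the change of variables is routine bookkeeping whose one delicate point is the time-rescaling of the Wiener process, and the Poincar\'e bound is then immediate from the already-stated Bakry--\'Emery criterion.
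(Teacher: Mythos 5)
Your proposal is correct and follows essentially the same route as the paper: the identical change of variables ($\tilde{\mathbf p}=\alpha^{-1/2}\mathbf p$, $\tau=\sqrt{\alpha}\,t$, with $V\mapsto\alpha^{-1}V$ and $\Gamma\mapsto\alpha^{-1/2}\Gamma$), followed by reading off the Gibbs measure $\propto\exp(-\alpha(\tilde V+\|\tilde{\mathbf p}\|^2/2))$ and applying Bakry--\'Emery. The only difference is that you verify invariance of this measure by an explicit generator/Fokker--Planck computation, whereas the paper simply asserts it; this is a harmless (indeed welcome) addition of detail.
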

\begin{proof}
In the original dynamics \cref{eq: original}, we rescale the potential function $V$, friction coefficient $\Gamma$ and momentum $\mathbf{p}$ as 
$\tilde{V} = \frac{1}{\alpha}V$, $\tilde{\Gamma} = \frac{1}{\sqrt{\alpha}}\Gamma$, and $\mathbf{p}' = \frac{1}{\sqrt{\alpha}}\mathbf{p}$. Then, the equation is equivalent to 
\begin{equation}
\begin{cases}
\dd \mathbf{q}_{t} =\sqrt{\alpha} \mathbf{p}'_{t}\, \dd t \\
\dd \mathbf{p}'_{t} = -\sqrt{\alpha}\nabla \tilde{V}(\mathbf{q}_{t})\,\dd t-\sqrt{\alpha}\tilde{\Gamma}(\mathbf{q}_{\tau})\mathbf{p}'_{t}\,\dd t + \sqrt{2\sqrt{\alpha}^{-1}\tilde{\Gamma}(\mathbf{q}_{t})}\,\dd\mathbf{W}_{t}.
\end{cases}
\end{equation}
Then, we rescale the time variable by defining the process $\tilde{X}_{\tau} = (\tilde{\mathbf{q}}_{\tau}, \tilde{\mathbf{p}}_{\tau})$ as $\tilde{\mathbf{q}}_{\tau} = \mathbf{q}_{\tau/\sqrt{\alpha}}$ and $\tilde{\mathbf{p}}_{\tau} = \mathbf{p}'_{\tau/\sqrt{\alpha}}$. The process $(\tilde{X}_{\tau})$ satisfies the equation 
\begin{equation}\label{eq2}
\begin{cases}
\dd \tilde{\mathbf{q}}_{\tau} = \tilde{\mathbf{p}}_{\tau}\, \dd \tau \\
\dd\tilde{\mathbf{p}}_{\tau} = -\nabla \tilde{V}(\tilde{\mathbf{q}}_{\tau})\,\dd \tau-\tilde{\Gamma}(\tilde{\mathbf{q}}_{\tau})\tilde{\mathbf{p}}_{\tau}\,\dd \tau + \sqrt{2\alpha^{-1}\tilde{\Gamma}(\tilde{\mathbf{q}}_{\tau})}\,\dd\mathbf{W}_{\tau}.
\end{cases}
\end{equation}
By rescaling, the stationary distribution $\tilde{\pi}$ of \cref{eq2} is proportional to $\exp\{-\alpha \tilde {V}-\frac{\alpha}{2} \Vert \tilde{\mathbf{p}}\Vert^{2}\}$, and it has the PI constant as $\alpha$.
\end{proof}
Next is the corollary of the rescaling scheme for the Lyapunov function that we mainly consider in \cref{sec3}. 
\begin{corollary}\label{cor3}
Recalling the notation on the proof of \cref{prop5}, we let the solution of the original dynamics \cref{eq: original} as $\rho_{t}$, the corresponding solution of the rescaled dynamics \cref{eq: rescaled} as $\tilde{\rho}_{t}$, and similarly denote the variables. Denote the Lyapunov function $\tilde{\mathcal{L}}(\tilde{\rho}_{\tau})$ for the rescaled dynamics as
\begin{align*}
     \tilde{\mathcal{L}}(\tilde{\rho}_{\tau})=\chi^{2}(\tilde{\rho}_{\tau}\Vert\tilde{\pi})+\mathbb{E}_{\tilde{\pi}}\big[\,\big\langle\, \nabla_{\tilde{x}}\frac{\mathbf{\tilde{\rho}}_{\tau}}{\tilde{\pi}},\,\tilde{S}\,\nabla_{\tilde{x}}\frac{\mathbf{\tilde{\rho}}_{\tau}}{\tilde{\pi}}\,\big\rangle\,\big],
\end{align*}
where $\tilde{S}$ is a positive semidefinite matrix
$\tilde{S}=
\bigl[ \begin{smallmatrix}b\tilde{\Gamma}^{-2}\,&a\tilde{\Gamma}^{-1}\,\\a\tilde{\Gamma}^{-1}\,&cI\,
\end{smallmatrix}\bigr]=\bigl[\begin{smallmatrix}
b(\tilde{\Gamma})&a(\tilde{\Gamma})\\
a(\tilde{\Gamma})&c(\tilde{\Gamma})
\end{smallmatrix}\bigr]$ with positive coefficients $a, b$, and $c$. In addition, let the Lyapunov function for the original dynamics as
\begin{align}
\mathcal{L}(\rho_{t}) = \chi^{2}(\rho_{t}\Vert\pi)+\mathbb{E}_{\pi}\big[\,\big\langle\, \nabla_{\mathbf{x}}\frac{\rho_{t}}{\pi},\,P^{\top}\tilde{S}P\,\nabla_{\mathbf{x}}\frac{\rho_{t}}{\pi}\,\big\rangle\,\big]
\end{align}
with the matrix $P = \bigl[ \begin{smallmatrix}
I&O\\O& \sqrt{\alpha} I
\end{smallmatrix}\bigr ]$. Then, the exponential convergence of the function $\tilde{\mathcal{L}}$ with the rate $m>0$
\begin{align}
\tilde{\mathcal{L}}(\tilde{\rho}_{\tau})\leq e^{-m \tau}\tilde{\mathcal{L}}(\tilde{\rho}_{0})
\end{align}
implies
\begin{align}
\mathcal{L}(\rho_{t})\leq e^{-\sqrt{\alpha}mt}\mathcal{L}(\rho_{0}).
\end{align}
\end{corollary}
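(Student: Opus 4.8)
The plan is to prove the exact pointwise-in-time identity
\[
\tilde{\mathcal{L}}(\tilde{\rho}_{\tau}) = \mathcal{L}(\rho_{\tau/\sqrt{\alpha}}) \qquad \text{for every } \tau \geq 0,
\]
and then read off the statement: substituting $\tau = \sqrt{\alpha}\,t$ converts the hypothesis $\tilde{\mathcal{L}}(\tilde{\rho}_{\tau}) \leq e^{-m\tau}\tilde{\mathcal{L}}(\tilde{\rho}_{0})$ into $\mathcal{L}(\rho_{t}) \leq e^{-\sqrt{\alpha}\,m\,t}\,\mathcal{L}(\rho_{0})$, where the $\tau=0$ case of the identity also supplies $\tilde{\mathcal{L}}(\tilde{\rho}_{0}) = \mathcal{L}(\rho_{0})$.

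First I would extract the change of variables from the proof of \cref{prop5}: there the rescaled process satisfies $\tilde{X}_{\tau} = P^{-1}X_{\tau/\sqrt{\alpha}}$ with $P = \bigl[\begin{smallmatrix}I&O\\O&\sqrt{\alpha}\,I\end{smallmatrix}\bigr]$, so by the push-forward formula $\tilde{\rho}_{\tau}(\tilde{\mathbf{x}}) = |\det P|\,\rho_{\tau/\sqrt{\alpha}}(P\tilde{\mathbf{x}})$ and, identically, $\tilde{\pi}(\tilde{\mathbf{x}}) = |\det P|\,\pi(P\tilde{\mathbf{x}})$. Two consequences drive the whole computation: the likelihood ratios transform with no Jacobian factor, $\frac{\tilde{\rho}_{\tau}}{\tilde{\pi}}(\tilde{\mathbf{x}}) = \frac{\rho_{\tau/\sqrt{\alpha}}}{\pi}(P\tilde{\mathbf{x}})$, with the time index sliding from $\tau$ to $\tau/\sqrt{\alpha}$ because $\tilde{\mathbf{q}}_{\tau} = \mathbf{q}_{\tau/\sqrt{\alpha}}$; and $\tilde{\pi}(\tilde{\mathbf{x}})\,\dd\tilde{\mathbf{x}} = \pi(\mathbf{x})\,\dd\mathbf{x}$ under $\mathbf{x} = P\tilde{\mathbf{x}}$.

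Next I would transform the two summands of $\tilde{\mathcal{L}}$ separately. For the $\chi^{2}$ term, inserting the ratio relation into $\int(\frac{\tilde{\rho}_{\tau}}{\tilde{\pi}}-1)^{2}\,\dd\tilde{\pi}$ and changing variables to $\mathbf{x} = P\tilde{\mathbf{x}}$ returns $\chi^{2}(\rho_{\tau/\sqrt{\alpha}}\Vert\pi)$ directly. For the Fisher-information-like cross term, the chain rule gives $\nabla_{\tilde{\mathbf{x}}}\frac{\tilde{\rho}_{\tau}}{\tilde{\pi}}(\tilde{\mathbf{x}}) = P^{\top}\big(\nabla_{\mathbf{x}}\frac{\rho_{\tau/\sqrt{\alpha}}}{\pi}\big)(P\tilde{\mathbf{x}})$, so the quadratic form $\langle\nabla_{\tilde{\mathbf{x}}}\frac{\tilde{\rho}_{\tau}}{\tilde{\pi}},\,\tilde{S}\,\nabla_{\tilde{\mathbf{x}}}\frac{\tilde{\rho}_{\tau}}{\tilde{\pi}}\rangle$ evaluated at $\tilde{\mathbf{x}}$ equals $\langle\nabla_{\mathbf{x}}\frac{\rho_{\tau/\sqrt{\alpha}}}{\pi},\,P^{\top}\tilde{S}P\,\nabla_{\mathbf{x}}\frac{\rho_{\tau/\sqrt{\alpha}}}{\pi}\rangle$ evaluated at $P\tilde{\mathbf{x}}$ (using $P^{\top}=P$); here one verifies that the $\mathbf{q}$-dependent matrix $\tilde{S}$, a function of $\tilde{\Gamma}(\tilde{\mathbf{q}})$, is fed the correct argument, since $\tilde{\mathbf{q}}$ is the $\mathbf{q}$-block of $P\tilde{\mathbf{x}}$ and $\tilde{\Gamma} = \alpha^{-1/2}\Gamma$ — which is exactly how the symbol $P^{\top}\tilde{S}P$ in the definition of $\mathcal{L}$ is to be read. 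Taking $\mathbb{E}_{\tilde{\pi}}$ and changing variables once more with $\tilde{\pi}\,\dd\tilde{\mathbf{x}} = \pi\,\dd\mathbf{x}$ identifies this with the cross term of $\mathcal{L}(\rho_{\tau/\sqrt{\alpha}})$; adding the two summands yields the identity.

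The argument is essentially bookkeeping of a linear rescaling, so I do not expect a genuine obstacle; the one place needing care is to keep the Jacobian $|\det P|$, the position argument handed to the state-dependent matrix $\tilde{S}$, and the time reparametrization $\tau\mapsto\tau/\sqrt{\alpha}$ all consistent at once, so that no spurious power of $\alpha$ survives in either the $\chi^{2}$ term or the cross term — the factor $\sqrt{\alpha}$ in the final rate then enters solely through the time reparametrization.
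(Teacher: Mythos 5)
Your proposal is correct and follows essentially the same route as the paper: both establish the pointwise-in-time identity $\tilde{\mathcal{L}}(\tilde{\rho}_{\tau})=\mathcal{L}(\rho_{\tau/\sqrt{\alpha}})$ by matching the $\chi^{2}$ term and the cross term separately under the linear change of variables $\mathbf{x}=P\tilde{\mathbf{x}}$ (the paper's $\alpha^{d/2}$ density factor is exactly your $|\det P|$), and then convert the rate via $\tau=\sqrt{\alpha}\,t$. Your added care about the position argument handed to $\tilde{S}$ and the cancellation of the Jacobian in the likelihood ratio is consistent with, and slightly more explicit than, the paper's computation.
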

\begin{proof}
By the rescaling of variables $\tau = \sqrt{\alpha} t$, $\tilde{\mathbf{q}} = \mathbf{q}$, and $\tilde{\mathbf{p}}=\frac{1}{\sqrt{\alpha}}\mathbf{p}$, $\tilde{\rho}_{\tau}(\tilde{\mathbf{q}},\tilde{\mathbf{p}})=\alpha^{\frac{d}{2}}\rho_{t}(\mathbf{q}, \mathbf{p})$ and $\tilde{\pi}(\tilde{\mathbf{q}},\tilde{\mathbf{p}})=\alpha^{\frac{d}{2}}\pi(\mathbf{q}, \mathbf{p})$ holds by direct computation. By these properties, we have
\begin{align}
\chi^{2}(\tilde{\rho}_{\tau}\Vert\tilde{\pi}) &= \int (\frac{\tilde{\rho}_{\tau}(\tilde{\mathbf{q}}, \tilde{\mathbf{p}})}{\tilde{\pi}(\tilde{\mathbf{q}}, \tilde{\mathbf{p}})}-1)^{2}\tilde{\pi}(\tilde{\mathbf{q}},\tilde{\mathbf{p}})\dd\tilde{\mathbf{q}}\dd\tilde{\mathbf{p}}\\&=\int(\frac{\rho_{t}(\mathbf{q},\mathbf{p})}{\pi(\mathbf{q},\mathbf{p})}-1)^{2}\pi(\mathbf{q},\mathbf{p})\dd\mathbf{q}\dd\mathbf{p}=\chi^{2}(\rho_{t}\Vert\pi)
\end{align}
and similarly derive the equation
\begin{align}
\mathbb{E}_{\tilde{\pi}}\big[\,\big\langle\, \nabla_{\tilde{x}}\frac{\mathbf{\tilde{\rho}}_{\tau}}{\tilde{\pi}},\,\tilde{S}\,\nabla_{\tilde{x}}\frac{\mathbf{\tilde{\rho}}_{\tau}}{\tilde{\pi}}\,\big\rangle\,\big] = \mathbb{E}_{\pi}\big[\,\big\langle\, P\,\nabla_{\mathbf{x}}\frac{\rho_{t}}{\pi},\,\tilde{S}P\,\nabla_{\mathbf{x}}\frac{\rho_{t}}{\pi}\,\big\rangle\,\big].
\end{align}
Therefore, two Lyapunov functions are equal since
\begin{align}
    \tilde{\mathcal{L}}(\tilde{\rho}_{\tau})=&\chi^{2}(\tilde{\rho}_{\tau}\Vert\tilde{\pi})+\mathbb{E}_{\tilde{\pi}}\big[\,\big\langle\, \nabla_{\tilde{x}}\frac{\mathbf{\tilde{\rho}}_{\tau}}{\tilde{\pi}},\,\tilde{S}\,\nabla_{\tilde{x}}\frac{\mathbf{\tilde{\rho}}_{\tau}}{\tilde{\pi}}\,\big\rangle\,\big] \\=&\chi^{2}(\rho_{t}\Vert\pi) +\mathbb{E}_{\pi}\big[\,\big\langle\, P\,\nabla_{\mathbf{x}}\frac{\rho_{t}}{\pi},\,\tilde{S}P\,\nabla_{\mathbf{x}}\frac{\rho_{t}}{\pi}\,\big\rangle\,\big]=\mathcal{L}(\rho_{t}),
\end{align}
so the Lyapunov function $\mathcal{L}(\rho_{t})$ for original dynamics satisfies
\begin{align}
\mathcal{L}(\rho_{t})\leq e^{-\sqrt{\alpha}m t}\mathcal{L}(\rho_{0}).
\end{align}
\end{proof}

We introduce two assumptions for the original dynamics \cref{eq: original}. 
\begin{assumption}\label{assu: original} Regarding the original dynamics \cref{eq: original}, we introduce two assumptions for the friction coefficient $\Gamma = s\sqrt{\text{Hess}\, \, V}$ case.
\begin{enumerate}[label=(\alph*)]
\item Potential function $V$ is $\beta$-smooth and $\alpha$-strongly convex for some $\alpha$, $\beta>0$, i.e. $\alpha I\preceq \text{Hess}\, V\preceq \beta I.$ \label{assum1}
\item The derivative of the square root of Hessian $V$ with respect to any $\mathbf{q}_{i}$ is bounded, i.e. $\Vert\frac{\partial\sqrt{\text{Hess}\, \, V}}{\partial q_{i}}\Vert_{2} \leq \gamma$. \label{assum2}
\end{enumerate}
\end{assumption}
Then, we denote \cref{assu: rescaled} for the rescaled dynamics \cref{eq: rescaled} based on original \cref{assu: original} for the original dynamics \cref{eq: original}. 
\begin{assumption}\label{assu: rescaled} We introduce the assumptions for the rescaled dynamics \cref{eq: rescaled} with the friction coefficient $\Gamma = s\sqrt{\text{Hess}\, \, V}$.
\begin{enumerate}[label=(\alph*)]
\item Potential function $V$ is $\frac{\beta}{\alpha}$-smooth and $1$-strongly convex, i.e. $I\preceq \text{Hess}\, V\preceq \frac{\beta}{\alpha} I.$
\item The derivative of the square root of Hessian $V$ with respect to any $\mathbf{q}_{i}$ is bounded, i.e. $\Vert \frac{\partial\sqrt{\text{Hess}\, \, V}}{\partial q_{i}} \Vert_{2} \leq \frac{\gamma}{\sqrt{\alpha}} $.
\end{enumerate}
\end{assumption}

\section{Potential Function Related Friction Coefficient Case}\label{sec3}
In this section, we consider the rescaled dynamics \cref{eq: rescaled} with \cref{assu: rescaled} where the friction coefficient is given as $\Gamma = s\sqrt{\text{Hess}\, \, V}$ with $s>0$. First, we evaluate the derivative of the Lyapunov function and obtain its bound. Then, based on the bound of the derivative, we prove that Lyapunov function exponentially converges in certain conditions, which leads the convergence of $\chi^{2}$-divergence.

We analyze the Lyapunov function based on the approach of \cite{12}. In the next theorem, we compute the derivative of the Lyapunov function when the friction coefficient is $\Gamma=s\sqrt{\text{Hess}\, \, V}$. We note that this result holds for any $\Gamma$ given as function of $\mathbf{q}$, and it coincides with the result of \cite{12} since the terms with derivative of $\Gamma$ become zero when $\Gamma$ is constant.
\begin{theorem}\label{thm5}
Let Lyapunov function $\mathcal{L}(\rho_{t})=\chi^{2}(\rho_{t}\Vert \pi)+\mathbb{E}_{\pi}\big[\,\big\langle\, \nabla_{\mathbf{x}}\frac{\rho_{t}}{\pi},\,S\,\nabla_{\mathbf{x}}\frac{\rho_{t}}{\pi}\,\big\rangle\,\big]$, where $S$ is a positive semidefinite matrix
$S=
\bigl[ \begin{smallmatrix}b\Gamma^{-2}\,&a\Gamma^{-1}\,\\a\Gamma^{-1}\,&cI\,
\end{smallmatrix}\bigr]=\bigl[\begin{smallmatrix}
b(\Gamma)&a(\Gamma)\\
a(\Gamma)&c(\Gamma)
\end{smallmatrix}\bigr]$ with positive coefficients $a, b$, and $c$. Then, letting $h(\rho_{t})=\frac{\rho_{t}}{\pi}$, derivative of the Lyapunov function is given as
\begin{align}
\frac{\dd \mathcal{L}(\rho_{t})}{\dd t}=&-2\mathbb{E}_{\pi}\big\{\big\langle\,\nabla_{\mathbf{p}}h,(\alpha^{-1}\Gamma+c(\Gamma)\Gamma- a(\Gamma)\text{Hess}\, V)\nabla_{\mathbf{p}}h\,\big\rangle+\big\langle\,\nabla_{\mathbf{q}}h, a(\Gamma)\nabla_{\mathbf{q}}h\,\big\rangle\\&
+\big\langle\,\nabla_{\mathbf{p}}h,(a(\Gamma)\Gamma+c(\Gamma)-b(\Gamma)\text{Hess}\, V)\nabla_{\mathbf{q}}h\,\big\rangle+\alpha^{-1}\big\langle\,
SA,\nabla_{\mathbf{p}}\nabla_{\mathbf{x}}h\,\big\rangle_{F}\\&+\alpha^{-1}\big\langle\,
\nabla_{\mathbf{p}}\nabla_{\mathbf{x}}h\cdot\Gamma,S\nabla_{\mathbf{p}}\nabla_{\mathbf{x}}h\,\big\rangle_{F}
+\alpha^{-1}\big\langle\,B,\nabla_{\mathbf{p}}\nabla_{\mathbf{x}}h\,\big\rangle_{F}\big\},
\end{align}
where $A$ is a $2d \cross d$ matrix having the $i$-th row as
\begin{align}
(A)_{i}=(\frac{\partial \Gamma}{\partial x_{i}}\nabla_{\mathbf{p}}h)^{\top}
\end{align}
and $B$ is a $2d \cross d$ matrix that has the entries
\begin{align}
B_{ij}=([S,\nabla_{\mathbf{q}}]\nabla_{\mathbf{x}}h)_{ij} = -\sum_{k}\frac{\partial S_{ik}}{\partial q_{j}}\frac{\partial h}{\partial x_{k}}.
\end{align}
Here, we define the $d\cross d$ matrix $A_{1}$ and $A_{2}$ that $A_{1}$ is comprised of first $d$ rows of $A$ and $A_{2}$ is comprised of next $d$ rows of $A$, and we denote $A = \bigl[\begin{smallmatrix}
A_{1} \\ A_{2}
\end{smallmatrix}\bigr]$.
\end{theorem}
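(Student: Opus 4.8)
The plan is to differentiate $\mathcal{L}(\rho_{t})=\chi^{2}(\rho_{t}\Vert\pi)+\mathbb{E}_{\pi}[\langle\nabla_{\mathbf{x}}h,S\nabla_{\mathbf{x}}h\rangle]$ term by term along the Fokker--Planck flow, where $h=\rho_{t}/\pi$. First I would record the evolution equation for $h$: writing the generator of \cref{eq: rescaled} as the sum of its $L^{2}(\pi)$-antisymmetric (Hamiltonian) part $\alpha^{-1}B$ and its $L^{2}(\pi)$-symmetric (dissipative) part $\mathcal{S}:=-\alpha^{-1}(\nabla_{\mathbf{p}})^{*}\Gamma\nabla_{\mathbf{p}}$ — this uses the adjoint formulas $(\nabla_{\mathbf{p}})^{*}=-\nabla_{\mathbf{p}}^{\top}+\alpha\mathbf{p}^{\top}$, $(\nabla_{\mathbf{q}})^{*}=-\nabla_{\mathbf{q}}^{\top}+\alpha\nabla V^{\top}$ and the commutativity lemma — one obtains $\partial_{t}h=\mathcal{S}h-\alpha^{-1}B[h]$. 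The $\chi^{2}$ term is then immediate: $\frac{\dd}{\dd t}\chi^{2}(\rho_{t}\Vert\pi)=2\mathbb{E}_{\pi}[h\,\partial_{t}h]$; the $B$-contribution vanishes by antisymmetry of $B$ in $L^{2}(\pi)$, and one integration by parts against $(\nabla_{\mathbf{p}})^{*}$ turns the dissipative part into $-2\alpha^{-1}\mathbb{E}_{\pi}[\langle\nabla_{\mathbf{p}}h,\Gamma\nabla_{\mathbf{p}}h\rangle]$, supplying the $\alpha^{-1}\Gamma$ summand in the $\nabla_{\mathbf{p}}h$-slot.

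The core computation is $\frac{\dd}{\dd t}\mathbb{E}_{\pi}[\langle\nabla_{\mathbf{x}}h,S\nabla_{\mathbf{x}}h\rangle]=2\mathbb{E}_{\pi}[\langle S\nabla_{\mathbf{x}}h,\nabla_{\mathbf{x}}\partial_{t}h\rangle]$ (using $S=S^{\top}$), for which I would commute $\nabla_{\mathbf{x}}$ through both parts of the generator. For the Hamiltonian part the commutators are already recorded: $[\nabla_{\mathbf{p}},B]=\alpha\nabla_{\mathbf{q}}$ and $[\nabla_{\mathbf{q}},B]=-\alpha\,\text{Hess}\,V\,\nabla_{\mathbf{p}}$, so $\nabla_{\mathbf{x}}(B[h])=B\nabla_{\mathbf{x}}h+\bigl[\begin{smallmatrix}-\alpha\,\text{Hess}\,V\,\nabla_{\mathbf{p}}h\\\alpha\nabla_{\mathbf{q}}h\end{smallmatrix}\bigr]$. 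For the dissipative part, the Ornstein--Uhlenbeck commutator gives $[\nabla_{\mathbf{p}},\mathcal{S}]h=-\Gamma\nabla_{\mathbf{p}}h$, while the $\mathbf{q}$-component picks up the new term $[\nabla_{\mathbf{q}},\mathcal{S}]h=-\alpha^{-1}(\nabla_{\mathbf{p}})^{*}[(\nabla_{\mathbf{q}}\Gamma)\nabla_{\mathbf{p}}h]$, precisely because $\Gamma$ depends on $\mathbf{q}$; this is the object carried by $A$, whose lower block $A_{2}$ vanishes since $\partial\Gamma/\partial p_{i}=0$.

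Next I would substitute, integrate each term by parts using adjointness of $(\nabla_{\mathbf{p}})^{*}$ and $(\nabla_{\mathbf{q}})^{*}$ together with $\mathbb{E}_{\pi}[B[\cdot]]=0$, and regroup. Three routes produce the Frobenius terms: the transported dissipation $\langle S\nabla_{\mathbf{x}}h,\mathcal{S}\nabla_{\mathbf{x}}h\rangle$ integrates by parts to the \emph{good} term $-\alpha^{-1}\langle\nabla_{\mathbf{p}}\nabla_{\mathbf{x}}h\cdot\Gamma,S\nabla_{\mathbf{p}}\nabla_{\mathbf{x}}h\rangle_{F}$; the $[\nabla_{\mathbf{q}},\mathcal{S}]$ piece integrates by parts to $-\alpha^{-1}\langle SA,\nabla_{\mathbf{p}}\nabla_{\mathbf{x}}h\rangle_{F}$; and the transported Hamiltonian term $\langle S\nabla_{\mathbf{x}}h,B\nabla_{\mathbf{x}}h\rangle$ is rewritten via antisymmetry of $B$ as $-\tfrac12\langle\nabla_{\mathbf{x}}h,(BS)\nabla_{\mathbf{x}}h\rangle$ with $(BS)_{ij}=\alpha\,\mathbf{p}^{\top}\nabla_{\mathbf{q}}S_{ij}$, after which the identity $\alpha p_{k}=(\partial_{p_{k}})^{*}+\partial_{p_{k}}$ (hence $\mathbb{E}_{\pi}[p_{k}g]=\alpha^{-1}\mathbb{E}_{\pi}[\partial_{p_{k}}g]$) trades the $\mathbf{p}$-weight for a $\nabla_{\mathbf{p}}$, giving exactly $-\alpha^{-1}\langle B,\nabla_{\mathbf{p}}\nabla_{\mathbf{x}}h\rangle_{F}$ with $B=[S,\nabla_{\mathbf{q}}]\nabla_{\mathbf{x}}h$. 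Finally, the commutator pieces $-\alpha^{-1}[\nabla_{\mathbf{x}},B]h$ and $[\nabla_{\mathbf{p}},\mathcal{S}]h$, contracted against $S\nabla_{\mathbf{x}}h$ and expanded through the block form $S=\bigl[\begin{smallmatrix}b(\Gamma)&a(\Gamma)\\a(\Gamma)&c(\Gamma)\end{smallmatrix}\bigr]$ (using $a(\Gamma)\Gamma=aI$, $c(\Gamma)=cI$), yield the quadratic form with $c(\Gamma)\Gamma-a(\Gamma)\,\text{Hess}\,V$ in $\nabla_{\mathbf{p}}h$, the form with $a(\Gamma)$ in $\nabla_{\mathbf{q}}h$, and the cross form with $a(\Gamma)\Gamma+c(\Gamma)-b(\Gamma)\,\text{Hess}\,V$; collecting everything gives the stated identity.

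I expect the main obstacle to be exactly this last regrouping — the bookkeeping forced by the $\mathbf{q}$-dependence of $\Gamma$ (hence $S$): one must track which derivative of $h$ sits in which index slot, repeatedly use that $\Gamma$ and $S$ are $\mathbf{p}$-independent so $\nabla_{\mathbf{p}}$ passes through them, and handle the extra terms generated by $[\partial_{p_{i}},(\partial_{p_{j}})^{*}]=\alpha\delta_{ij}$ whenever $\nabla_{\mathbf{p}}$ is moved past $(\nabla_{\mathbf{p}})^{*}$. Getting every sign and matrix placement to line up is the delicate part; the constant-$\Gamma$ computation of \cite{12} serves as a consistency check, since there $A=0$, $B=0$ and the $[\nabla_{\mathbf{q}},\mathcal{S}]$ term disappears, recovering their formula.
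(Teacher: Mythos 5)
Your proposal is correct and follows essentially the same route as the paper: the paper organizes the computation through the Fokker--Planck equation and the functional derivative $\frac{\delta\mathcal{L}}{\delta\rho}=2h+2(\nabla_{\mathbf{x}})^{*}S\nabla_{\mathbf{x}}h$, whereas you work dually with $\partial_{t}h=\mathcal{S}h-\alpha^{-1}B[h]$ and commute $\nabla_{\mathbf{x}}$ through the generator, but both presentations rest on the identical commutator identities ($[\nabla_{\mathbf{p}},B]$, $[\nabla_{\mathbf{q}},B]$, the dissipative commutators, and $[S,\nabla_{\mathbf{q}}]$) and identify the same extra terms $A$ and $B$ arising from the $\mathbf{q}$-dependence of $\Gamma$. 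The paper's \cref{thm6} and \cref{thm9} are precisely your Hamiltonian-part and dissipative-part commutations, so the two arguments coincide up to the order in which the integrations by parts are performed.
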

\begin{proof}
The proof is similar to the proof of \cite[Lemma 20]{12}. First, Fokker-Planck equation of the rescaled dynamics \cref{eq: rescaled} can be written as 
\begin{equation}
\frac{\partial\rho_{t}}{\partial t}+\nabla_{\mathbf{x}}\cdot(\rho_{t}\mathbf{J}_{t})=0 \text{ where }
\mathbf{J}_{t} = 
\begin{bmatrix}
\mathbf{p_{t}}\\-\nabla V(\mathbf{q}_{t})-\Gamma(\mathbf{q}_{t})\mathbf{p}_{t}-\alpha^{-1}\Gamma(\mathbf{q}_{t})\nabla_{\mathbf{p}}\log\rho_{t}
\end{bmatrix}.
\end{equation}
Here, by the property
\begin{equation}
\nabla_{\mathbf{x}}\cdot
\begin{bmatrix}
\rho_{t}
\begin{bmatrix}
-\nabla_{\mathbf{p}}\log\rho_{t}\\\nabla_{\mathbf{q}}\log\rho_{t}
\end{bmatrix}
\end{bmatrix}
=0,
\end{equation}
$\mathbf{J}_{t}$ can be converted to
\begin{equation}
\mathbf{J}_{t}
=\alpha^{-1}\frac{\pi}{\rho_{t}}
\begin{bmatrix}
O& I \\
-I& -\Gamma(\mathbf{q}_{t})
\end{bmatrix}
\begin{bmatrix}
\nabla_{\mathbf{q}}\frac{\rho_{t}}{\pi}\\\nabla_{\mathbf{p}}\frac{\rho_{t}}{\pi}
\end{bmatrix}
=-\alpha^{-1}\frac{\pi}{\rho_{t}}G(\mathbf{q}_{t})\nabla_{\mathbf{x}}\frac{\rho_{t}}{\pi}
\end{equation}
where $G(\mathbf{q}_{t})=
\begin{bmatrix}
O&-I\\
I&\Gamma(\mathbf{q}_{t})
\end{bmatrix}$.
Next, we analyze the time derivative of Lyapunov function. By direct computation,  
\begin{align}
    \frac{\dd}{\dd \varepsilon}\mathcal{L}[\rho+\varepsilon \phi]|_{\varepsilon = 0} &= \frac{\dd}{\dd \varepsilon}\big(\,\chi^{2}(\rho+\varepsilon\phi\Vert \pi)+\mathbb{E}_{\pi}\big[\,\big\langle\, \nabla_{\mathbf{x}}\frac{\rho+\varepsilon \phi}{\pi},\,S\,\nabla_{\mathbf{x}}\frac{\rho+\varepsilon\phi}{\pi}\,\big\rangle\,\big]\,\big)\big|_{\varepsilon = 0}\\
    &= \int \frac{2\rho\phi}{\pi}\dd x + \mathbb{E}_{\pi}\big[\,\big\langle\, \nabla_{\mathbf{x}}\frac{\rho}{\pi},\,S\,\nabla_{\mathbf{x}}\frac{\phi}{\pi}\,\big\rangle\,\big]+ \mathbb{E}_{\pi}\big[\,\big\langle\, \nabla_{\mathbf{x}}\frac{\phi}{\pi},\,S\,\nabla_{\mathbf{x}}\frac{\rho}{\pi}\,\big\rangle\,\big]\\
    &= \int \frac{2\rho\phi}{\pi}\dd x + 2\mathbb{E}_{\pi}\big[\,\big\langle\, \frac{\phi}{\pi},(\nabla_{\mathbf{x}})^{\ast}\,S\,\nabla_{\mathbf{x}}\frac{\rho}{\pi}\,\big\rangle\,\big]\\
    &= \int \big(\,\frac{2\rho}{\pi}+ 2(\nabla_{x})^{\ast}\, S\,\nabla_{\mathbf{x}}\frac{\rho}{\pi}\,\big) \,\phi\, \dd x,
\end{align}
and recalling $h(\rho_{t})= \rho_{t} / \pi$, we obtain the functional derivative of the Lyapunov function $\mathcal{L}(\rho_{t})$ as
\begin{equation}
\frac{\delta\mathcal{L}(\rho_{t})}{\delta\rho_{t}}=2h+2(\nabla_{\mathbf{x}})^{*}\,S\,\nabla_{\mathbf{x}}h.  
\end{equation}

Then, time derivative of the Lyapunov function is derived as 
\begin{align}
\frac{\dd}{\dd t}\mathcal{L}(\rho_{t}) &=\int\frac{\delta\mathcal{L}(\rho_{t})}{\delta\rho_{t}}\,\frac{\partial\rho_{t}}{\partial t}\,\dd x=\int\big\langle\,\nabla_{\mathbf{x}}\frac{\delta\mathcal{L}(\rho_{t})}{\delta\rho_{t}},\,\rho_{t}\mathbf{J}_{t}\,\big\rangle\,\dd x\\&=-\alpha^{-1}\int\big\langle\,\nabla_{\mathbf{x}}\frac{\delta\mathcal{L}(\rho_{t})}{\delta\rho_{t}},\,
G\,\nabla_{\mathbf{x}}\frac{\rho_{t}}{\pi}\,\big\rangle\,\dd\pi\\&=-2\alpha^{-1}\int\big\langle\,\nabla_{\mathbf{x}}h+\nabla_{\mathbf{x}}(\nabla_{\mathbf{x}})^{*}(S\nabla_{\mathbf{x}}h),G\nabla_{\mathbf{x}}h\,\big\rangle\,\dd \pi.
\end{align}

Lastly, we analyze the term $\nabla_{\mathbf{x}}(\nabla_{\mathbf{x}})^{*}(S\nabla_{\mathbf{x}}h)$ and derive the final result. The above term $\nabla_{\mathbf{x}}(\nabla_{\mathbf{x}})^{*}(S\nabla_{\mathbf{x}}h)$ is evaluated as
\begin{align}
\nabla_{\mathbf{x}}(\nabla_{\mathbf{x}})^{*}(S\nabla_{\mathbf{x}}h)
=\nabla_{\mathbf{x}}\begin{bmatrix}
(\nabla_{\mathbf{q}})^{*}\\
(\nabla_{\mathbf{p}})^{*}
\end{bmatrix}
\begin{bmatrix}
b(\Gamma)&a(\Gamma)\\
a(\Gamma)&c(\Gamma)
\end{bmatrix}
\begin{bmatrix}
\nabla_{\mathbf{q}} h\\
\nabla_{\mathbf{p}} h
\end{bmatrix}=\nabla_{\mathbf{x}}f,
\end{align}
where 
\begin{equation}\label{eq3}
f=(\nabla_{\mathbf{q}})^{*}b(\Gamma)\nabla_{\mathbf{q}}h+(\nabla_{\mathbf{q}})^{*}a(\Gamma)\nabla_{\mathbf{p}}h+(\nabla_{\mathbf{p}})^{*}a(\Gamma)\nabla_{\mathbf{q}}h+(\nabla_{\mathbf{p}})^{*}c(\Gamma)\nabla_{\mathbf{p}}h.
\end{equation}
Then, since
\begin{align}
\mathbb{E}_{\pi}\big[\big\langle\nabla_{\mathbf{x}}f, G\nabla_{\mathbf{x}}h\big\rangle\big] =\mathbb{E}_{\pi}[-\big\langle\,\nabla_{\mathbf{q}}f, \nabla_{\mathbf{p}}h\,\big\rangle+\big\langle\,\nabla_{\mathbf{p}}f, \nabla_{\mathbf{q}}h\,\big\rangle+\big\langle\,\nabla_{\mathbf{p}}f,\Gamma\nabla_{\mathbf{p}}h\,\big\rangle]
\end{align}
holds, by \cref{thm6} we obtain 
\begin{align}
\frac{\dd \mathcal{L}(\rho_{t})}{\dd t}=&-2\mathbb{E}_{\pi}\big\{\alpha^{-1}\big\langle\,\nabla_{\mathbf{x}}h, G\nabla_{\mathbf{x}}h\,\big\rangle+\big\langle\,\nabla_{\mathbf{p}}h,(c(\Gamma)\Gamma- a(\Gamma)\text{Hess}\, V+\Gamma)\nabla_{\mathbf{p}}h\,\big\rangle\\&
+\big\langle\,\nabla_{\mathbf{p}}h,(a(\Gamma)\Gamma+c(\Gamma)-b(\Gamma)\text{Hess}\, V)\nabla_{\mathbf{q}}h\,\big\rangle+\big\langle\,\nabla_{\mathbf{q}}h, a(\Gamma)\nabla_{\mathbf{q}}h\,\big\rangle\\&
+\alpha^{-1}\big\langle\,
\begin{bmatrix}
\nabla_{\mathbf{q}}(\nabla_{\mathbf{p}})^{*}\Gamma\nabla_{\mathbf{p}}h\\
\nabla_{\mathbf{p}}(\nabla_{\mathbf{p}})^{*}\Gamma\nabla_{\mathbf{p}}h-\alpha\Gamma\nabla_{\mathbf{p}}h
\end{bmatrix}
,S\nabla_{\mathbf{x}}h\,\big\rangle\\&+\alpha^{-1}
\big\langle\,\nabla_{\mathbf{p}}\nabla_{\mathbf{x}}h, [S,\nabla_{\mathbf{q}}]\nabla_{\mathbf{x}}h\,\big\rangle_{F}\big\}. 
\end{align}
Here, by \cref{thm9},
\begin{equation}
\big\langle\,
\begin{bmatrix}
\nabla_{\mathbf{q}}(\nabla_{\mathbf{p}})^{*}\Gamma\nabla_{\mathbf{p}}h\\
\nabla_{\mathbf{p}}(\nabla_{\mathbf{p}})^{*}\Gamma\nabla_{\mathbf{p}}h-\alpha\Gamma\nabla_{\mathbf{p}}h
\end{bmatrix}
,S\nabla_{\mathbf{x}}h\,\big\rangle = \big\langle\,\nabla_{\mathbf{p}}\nabla_{\mathbf{x}}h\cdot\Gamma,S\nabla_{\mathbf{p}}\nabla_{\mathbf{x}}h\,\big\rangle_{F}+\big\langle\,A,S\nabla_{\mathbf{p}}\nabla_{\mathbf{x}}h\,\big\rangle_{F}
\end{equation}
holds, so sum of the last two terms are given by
\begin{align}
\big\langle\,&
\begin{bmatrix}
\nabla_{\mathbf{q}}(\nabla_{\mathbf{p}})^{*}\Gamma\nabla_{\mathbf{p}}h\\
\nabla_{\mathbf{p}}(\nabla_{\mathbf{p}})^{*}\Gamma\nabla_{\mathbf{p}}h-\alpha\Gamma\nabla_{\mathbf{p}}h
\end{bmatrix}
,S\nabla_{\mathbf{x}}h\,\big\rangle+
\big\langle\,\nabla_{\mathbf{p}}\nabla_{\mathbf{x}}h, [S,\nabla_{\mathbf{q}}]\nabla_{\mathbf{x}}h\,\big\rangle_{F}\\=&\big\langle\,
\nabla_{\mathbf{p}}\nabla_{\mathbf{x}}h\cdot\Gamma,S\nabla_{\mathbf{p}}\nabla_{\mathbf{x}}h\,\big\rangle_{F}+\big\langle\,
SA,\nabla_{\mathbf{p}}\nabla_{\mathbf{x}}h\,\big\rangle_{F}+\big\langle\,B,\nabla_{\mathbf{p}}\nabla_{\mathbf{x}}h\,\big\rangle_{F}
\end{align}
and the proof is complete.
\end{proof}
\begin{lemma}\label{thm6} Recalling the notions of the proof of \cref{thm5}, the two equation
\begin{align}
-\big\langle\,\nabla_{\mathbf{q}}f,\nabla_{\mathbf{p}}h\,\big\rangle+\big\langle\,\nabla_{\mathbf{p}}f,\nabla_{\mathbf{q}}h\,\big\rangle=&\alpha\big\langle\,\nabla_{\mathbf{p}}h,-a(\Gamma)\text{Hess}\,V\nabla_{\mathbf{p}}h\,\big\rangle+\alpha\big\langle\,\nabla_{\mathbf{q}}h,a(\Gamma)\nabla_{\mathbf{q}}h\,\big\rangle\\+&\alpha\big\langle\,\nabla_{\mathbf{p}}h,(c(\Gamma)-b(\Gamma)\text{Hess}\,V)\nabla_{\mathbf{q}}h\,\big\rangle
\\+&\big\langle\,\nabla_{\mathbf{p}}\nabla_{\mathbf{x}}h,[S,\nabla_{\mathbf{q}}]\nabla_{\mathbf{x}}h\,\big\rangle_{F}
\end{align}
and
\begin{align}
\big\langle\,\nabla_{\mathbf{p}}f,\Gamma\nabla_{\mathbf{p}}h\,\big\rangle=\big\langle\,\nabla_{\mathbf{x}}(\nabla_{\mathbf{p}})^{*}\Gamma\nabla_{\mathbf{p}}h, S\nabla_{\mathbf{x}}h\,\big\rangle
\end{align}
holds.
\end{lemma}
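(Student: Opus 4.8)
The plan is to verify the two identities in \cref{thm6} by direct computation, carefully tracking how the adjoint operators $(\nabla_{\mathbf{q}})^{*}$ and $(\nabla_{\mathbf{p}})^{*}$ interact with the matrix coefficients $a(\Gamma), b(\Gamma), c(\Gamma)$ and with the gradient operators. Recall from the notation section that $(\nabla_{\mathbf{q}})^{*} = -\nabla_{\mathbf{q}}^{\top} + \alpha\nabla V(\mathbf{q})^{\top}$ and $(\nabla_{\mathbf{p}})^{*} = -\nabla_{\mathbf{p}}^{\top} + \alpha\mathbf{p}^{\top}$, and that $f$ is the four-term expression in \cref{eq3}. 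The key structural facts I would lean on are: the commutator identities $[\nabla_{\mathbf{p}}, B][h] = \alpha\nabla_{\mathbf{q}}h$ and $[\nabla_{\mathbf{q}}, B][h] = -\alpha\,\text{Hess}\,V\,\nabla_{\mathbf{p}}h$ recorded in the notation section (where $B[h] = (\nabla_{\mathbf{p}})^{*}\nabla_{\mathbf{q}}h - (\nabla_{\mathbf{q}})^{*}\nabla_{\mathbf{p}}h$), and the fact that $\Gamma$ commutes with $(\nabla_{\mathbf{p}})^{*}$ by the first lemma of the excerpt.

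For the \textbf{first identity}, I would expand $-\langle\nabla_{\mathbf{q}}f,\nabla_{\mathbf{p}}h\rangle + \langle\nabla_{\mathbf{p}}f,\nabla_{\mathbf{q}}h\rangle$ term by term over the four summands of $f$. The terms where the gradient operator ($\nabla_{\mathbf{q}}$ or $\nabla_{\mathbf{p}}$) passes cleanly through the coefficient matrix produce exactly the $B$-commutator structure, which by the notation-section identities yields the $\alpha\langle\nabla_{\mathbf{p}}h, -a(\Gamma)\text{Hess}\,V\nabla_{\mathbf{p}}h\rangle$, $\alpha\langle\nabla_{\mathbf{q}}h, a(\Gamma)\nabla_{\mathbf{q}}h\rangle$, and $\alpha\langle\nabla_{\mathbf{p}}h,(c(\Gamma)-b(\Gamma)\text{Hess}\,V)\nabla_{\mathbf{q}}h\rangle$ contributions. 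The terms where the gradient operator must be differentiated \emph{across} the $\mathbf{q}$-dependent coefficient — i.e. where $\nabla_{\mathbf{q}}$ hits $a(\Gamma)$ or $b(\Gamma)$ — are precisely what is absent in the constant-$\Gamma$ case of \cite{12}; these collect into the commutator-type remainder $\langle\nabla_{\mathbf{p}}\nabla_{\mathbf{x}}h, [S,\nabla_{\mathbf{q}}]\nabla_{\mathbf{x}}h\rangle_{F}$, using that $[S,\nabla_{\mathbf{q}}]$ acting on $\nabla_{\mathbf{x}}h$ has entries $-\sum_{k}\frac{\partial S_{ik}}{\partial q_{j}}\frac{\partial h}{\partial x_{k}}$ as in the statement of \cref{thm5}. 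The bookkeeping to confirm that $\nabla_{\mathbf{p}}$ never differentiates the coefficients (since $a(\Gamma),b(\Gamma),c(\Gamma)$ depend only on $\mathbf{q}$) is what makes only the $\nabla_{\mathbf{q}}$-commutator survive.

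For the \textbf{second identity}, I would isolate from $f$ the contribution relevant to the friction term, namely $\langle\nabla_{\mathbf{p}}f, \Gamma\nabla_{\mathbf{p}}h\rangle$, and recognize that after moving $\Gamma$ and using self-adjointness of the inner product $\mathbb{E}_{\pi}[\langle\cdot,\cdot\rangle]$ together with the defining relation $f = (\nabla_{\mathbf{x}})^{*}(S\nabla_{\mathbf{x}}h)$, one gets $\langle\nabla_{\mathbf{p}}f,\Gamma\nabla_{\mathbf{p}}h\rangle = \langle\nabla_{\mathbf{x}}(\nabla_{\mathbf{p}})^{*}\Gamma\nabla_{\mathbf{p}}h,\, S\nabla_{\mathbf{x}}h\rangle$ by transferring $(\nabla_{\mathbf{x}})^{*}$ back to $\nabla_{\mathbf{x}}$ and $(\nabla_{\mathbf{p}})^{*}$ using the commutation lemma for $\Gamma$. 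This step is essentially an integration-by-parts rearrangement and should be short.

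The main obstacle I anticipate is the first identity: keeping a consistent sign and transpose convention while splitting each of the four terms of $f$ into its "coefficient-passes-through" part versus its "coefficient-gets-differentiated" part, and then correctly identifying the leftover pieces with the Frobenius-inner-product commutator $\langle\nabla_{\mathbf{p}}\nabla_{\mathbf{x}}h, [S,\nabla_{\mathbf{q}}]\nabla_{\mathbf{x}}h\rangle_{F}$ rather than something off by a sign or a transpose. In particular one must be careful that the cross terms $(\nabla_{\mathbf{q}})^{*}a(\Gamma)\nabla_{\mathbf{p}}h$ and $(\nabla_{\mathbf{p}})^{*}a(\Gamma)\nabla_{\mathbf{q}}h$ in $f$ contribute to \emph{both} the $B$-commutator block and the remainder block, and that the symmetry $a(\Gamma)^{\top}=a(\Gamma)$ (and similarly for $b,c$) is used to combine the two $a$-contributions. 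Once the term-by-term expansion is organized, the identity should follow by matching pieces; I would present it as a displayed computation grouped by which summand of $f$ is being differentiated.
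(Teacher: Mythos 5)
Your proposal is correct and follows essentially the same route as the paper: the first identity is $\langle f, B[h]\rangle$ unpacked via the commutator identities $[\nabla_{\mathbf{q}},B]$ and $[\nabla_{\mathbf{p}},B]$, with the $\mathbf{q}$-differentiated coefficients collecting into $\langle\nabla_{\mathbf{p}}\nabla_{\mathbf{x}}h,[S,\nabla_{\mathbf{q}}]\nabla_{\mathbf{x}}h\rangle_{F}$, and the second identity is the two-step adjoint transfer $\langle\nabla_{\mathbf{p}}f,\Gamma\nabla_{\mathbf{p}}h\rangle=\langle f,(\nabla_{\mathbf{p}})^{*}\Gamma\nabla_{\mathbf{p}}h\rangle=\langle\nabla_{\mathbf{x}}(\nabla_{\mathbf{p}})^{*}\Gamma\nabla_{\mathbf{p}}h,S\nabla_{\mathbf{x}}h\rangle$. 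The points you flag as delicate (symmetry of $a(\Gamma),b(\Gamma),c(\Gamma)$, and that only $\nabla_{\mathbf{q}}$ differentiates the coefficients) are exactly the ones the paper's computation relies on.
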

\begin{proof}
Note that $[\nabla_{\mathbf{p}}, B][h] =\alpha\nabla_{\mathbf{q}}h$ and $[\nabla_{\mathbf{q}}, B][h] =-\alpha\text{Hess}\,V\nabla_{\mathbf{p}}h$. For the first equation,
\begin{align}
\big\langle\,f, B[h]\,\big\rangle=-\big\langle\,\nabla_{\mathbf{q}}f, \nabla_{\mathbf{p}}h\,\big\rangle+\big\langle\,\nabla_{\mathbf{p}}f, \nabla_{\mathbf{q}}h\,\big\rangle 
\end{align}
and
\begin{align}
  \big\langle\,f, B[h]\,\big\rangle =& \big\langle\,b\nabla_{\mathbf{q}}h+a\nabla_{\mathbf{p}}h,\nabla_{\mathbf{q}}B[h]\,\big\rangle+\big\langle\,a\nabla_{\mathbf{q}}h+c\nabla_{\mathbf{p}}h, \nabla_{\mathbf{p}}B[h]\,\big\rangle\\=& \big\langle\,b\nabla_{\mathbf{q}}h+a\nabla_{\mathbf{p}}h,B[\nabla_{\mathbf{q}}h]\,\big\rangle+\big\langle\,a\nabla_{\mathbf{q}}h+c\nabla_{\mathbf{p}}h, B[\nabla_{\mathbf{p}}h]\,\big\rangle\\&
+\big\langle\,b\nabla_{\mathbf{q}}h+a\nabla_{\mathbf{p}}h,-\alpha\text{Hess}\,V\nabla_{\mathbf{p}}h\,\big\rangle+\big\langle\,a\nabla_{\mathbf{q}}h+c\nabla_{\mathbf{p}}h,\alpha\nabla_{\mathbf{q}}h\,\big\rangle
\end{align}
holds. In addition, since
\begin{align}
\big\langle\,&b\nabla_{\mathbf{q}}h+a\nabla_{\mathbf{p}}h,B[\nabla_{\mathbf{q}}h]\,\big\rangle+\big\langle\,a\nabla_{\mathbf{q}}h+c\nabla_{\mathbf{p}}h, B[\nabla_{\mathbf{p}}h]\,\big\rangle\\=&
\big\langle\,b\nabla_{\mathbf{q}}h+a\nabla_{\mathbf{p}}h,(\nabla_{\mathbf{p}})^{*}\nabla_{\mathbf{q}}\nabla_{\mathbf{q}}h-(\nabla_{\mathbf{q}})^{*}\nabla_{\mathbf{p}}\nabla_{\mathbf{q}}h\,\big\rangle\\&+\big\langle\,a\nabla_{\mathbf{q}}h+c\nabla_{\mathbf{p}}h,(\nabla_{\mathbf{p}})^{*}\nabla_{\mathbf{q}}\nabla_{\mathbf{p}}h-(\nabla_{\mathbf{q}})^{*}\nabla_{\mathbf{p}}\nabla_{\mathbf{p}}h\,\big\rangle\\
=&\big\langle\,\nabla_{\mathbf{p}}\nabla_{\mathbf{x}}h, [S,\nabla_{\mathbf{q}}]\nabla_{\mathbf{x}}h\,\big\rangle_{F},
\end{align}
the proof of the first equation is complete. Second equation holds since
\begin{align}
\big\langle\,\nabla_{\mathbf{p}}f,\Gamma\nabla_{\mathbf{p}}h\,\big\rangle&=\big\langle\,f,(\nabla_{\mathbf{p}})^{*}\Gamma\nabla_{\mathbf{p}}h\,\big\rangle\\&=\big\langle\,(\nabla_{\mathbf{x}})^{*}S\nabla_{\mathbf{x}}h,(\nabla_{\mathbf{p}})^{*}\Gamma\nabla_{\mathbf{p}}h\,\big\rangle=\big\langle\,\nabla_{\mathbf{x}}(\nabla_{\mathbf{p}})^{*}\Gamma\nabla_{\mathbf{p}}h, S\nabla_{\mathbf{x}}h\,\big\rangle.
\end{align}
\end{proof}
\begin{lemma}\label{thm9}
Recalling the notions of the proof of \cref{thm5},
the following result holds.
\begin{align}\label{eq4}
\begin{bmatrix}
\nabla_{\mathbf{q}}(\nabla_{\mathbf{p}})^{*}\Gamma\nabla_{\mathbf{p}}h\\
\nabla_{\mathbf{p}}(\nabla_{\mathbf{p}})^{*}\Gamma\nabla_{\mathbf{p}}h-\alpha\Gamma\nabla_{\mathbf{p}}h
\end{bmatrix} = (\nabla_{\mathbf{p}})^{*}(\nabla_{\mathbf{p}}\nabla_{\mathbf{x}}h\cdot\Gamma)+(\nabla_{\mathbf{p}})^{*}A    
\end{align}
\end{lemma}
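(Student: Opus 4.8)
The plan is to verify the claimed identity entrywise, reducing everything to the scalar operators $\partial_{q_i}$, $\partial_{p_i}$ and their adjoints $(\partial_{p_j})^{*}=-\partial_{p_j}+\alpha p_j$, and using that $\Gamma=\Gamma(\mathbf{q})$ is symmetric and independent of $\mathbf{p}$. Two elementary facts carry the argument. First, $\partial_{q_i}$ acts only on $\mathbf{q}$ while $(\partial_{p_j})^{*}$ and $\partial_{p_k}$ act only on $\mathbf{p}$, so these operators commute pairwise, and each entry $\Gamma_{jk}$ commutes past $(\partial_{p_\ell})^{*}$ (this is the commutation lemma stated right after the Notation section, applied entrywise). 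Second, a one-line computation from $(\partial_{p_j})^{*}=-\partial_{p_j}+\alpha p_j$ gives $[\partial_{p_i},(\partial_{p_j})^{*}]=\alpha\delta_{ij}\,\mathrm{Id}$. I would start from the scalar identity $(\nabla_{\mathbf{p}})^{*}\Gamma\nabla_{\mathbf{p}}h=\sum_{j,k}\Gamma_{jk}(\partial_{p_j})^{*}\partial_{p_k}h$, which is immediate from the first fact, and then differentiate it in $\mathbf{q}$ and in $\mathbf{p}$ to recover the two blocks of the left-hand side.

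For the top $d$ entries (indexed by $q_i$), applying $\partial_{q_i}$ and using that it passes freely through $(\partial_{p_j})^{*}$ and $\partial_{p_k}$, the product rule splits the result into $\sum_{j,k}(\partial_{q_i}\Gamma_{jk})(\partial_{p_j})^{*}\partial_{p_k}h+\sum_{j,k}\Gamma_{jk}(\partial_{p_j})^{*}\partial_{p_k}\partial_{q_i}h$. The first sum is exactly the $i$-th entry of $(\nabla_{\mathbf{p}})^{*}A$, since $A_{ij}=\bigl(\tfrac{\partial\Gamma}{\partial q_i}\nabla_{\mathbf{p}}h\bigr)_j=\sum_k(\partial_{q_i}\Gamma_{jk})\partial_{p_k}h$ with $\partial_{q_i}\Gamma_{jk}$ independent of $\mathbf{p}$. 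The second sum is the $i$-th entry of $(\nabla_{\mathbf{p}})^{*}(\nabla_{\mathbf{p}}\nabla_{\mathbf{x}}h\cdot\Gamma)$: the $i$-th row of $\nabla_{\mathbf{p}}\nabla_{\mathbf{x}}h$ has entries $\partial_{p_\ell}\partial_{q_i}h$, so $(\nabla_{\mathbf{p}}\nabla_{\mathbf{x}}h\cdot\Gamma)_{ij}=\sum_\ell\partial_{p_\ell}\partial_{q_i}h\,\Gamma_{\ell j}$, and one reindexes using $\Gamma_{\ell j}=\Gamma_{j\ell}$.

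For the bottom $d$ entries (indexed by $p_i$), applying $\partial_{p_i}$ now triggers the commutator, $\partial_{p_i}\bigl((\partial_{p_j})^{*}\partial_{p_k}h\bigr)=(\partial_{p_j})^{*}\partial_{p_i}\partial_{p_k}h+\alpha\delta_{ij}\partial_{p_k}h$, so that $\partial_{p_i}\bigl((\nabla_{\mathbf{p}})^{*}\Gamma\nabla_{\mathbf{p}}h\bigr)=\sum_{j,k}\Gamma_{jk}(\partial_{p_j})^{*}\partial_{p_i}\partial_{p_k}h+\alpha\sum_k\Gamma_{ik}\partial_{p_k}h$. The boundary term equals $\alpha(\Gamma\nabla_{\mathbf{p}}h)_i$, which the explicit $-\alpha\Gamma\nabla_{\mathbf{p}}h$ on the left-hand side cancels exactly; the remaining term $\sum_{j,k}\Gamma_{jk}(\partial_{p_j})^{*}\partial_{p_i}\partial_{p_k}h$ matches the $(d+i)$-th entry of $(\nabla_{\mathbf{p}})^{*}(\nabla_{\mathbf{p}}\nabla_{\mathbf{x}}h\cdot\Gamma)$ by symmetry of $\Gamma$ and of mixed partials, while the $(d+i)$-th entry of $(\nabla_{\mathbf{p}})^{*}A$ vanishes — consistently — because the bottom $d$ rows of $A$ are zero, as $\partial_{p}\Gamma=0$. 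Combining the two blocks yields the asserted identity.

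I expect the only real obstacle to be bookkeeping: keeping the $2d\times d$ reshaping conventions straight (rows of $\nabla_{\mathbf{p}}\nabla_{\mathbf{x}}h$ indexed by $\mathbf{x}=(\mathbf{q},\mathbf{p})$, right-multiplication by $\Gamma$, and $(\nabla_{\mathbf{p}})^{*}$ acting on matrices), making the reindexing step $\sum_j(\partial_{p_j})^{*}\bigl(\sum_\ell\Gamma_{\ell j}(\cdots)_\ell\bigr)=\sum_{j,\ell}\Gamma_{j\ell}(\partial_{p_j})^{*}(\cdots)_\ell$ explicit, and pinning down the sign and the factor $\alpha$ in $[\partial_{p_i},(\partial_{p_j})^{*}]$ so that the cancellation against $-\alpha\Gamma\nabla_{\mathbf{p}}h$ is exact. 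Once the indices are fixed, each step is a one-line computation and no genuine difficulty remains.
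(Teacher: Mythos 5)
Your proof is correct and follows essentially the same route as the paper: a direct entrywise verification, differentiating $(\nabla_{\mathbf{p}})^{*}\Gamma\nabla_{\mathbf{p}}h$ in $\mathbf{q}$ and $\mathbf{p}$ and matching the pieces against $(\nabla_{\mathbf{p}})^{*}(\nabla_{\mathbf{p}}\nabla_{\mathbf{x}}h\cdot\Gamma)$ and $(\nabla_{\mathbf{p}})^{*}A$. The only difference is organizational — you package the cancellation of $-\alpha\Gamma\nabla_{\mathbf{p}}h$ via the commutator $[\partial_{p_i},(\partial_{p_j})^{*}]=\alpha\delta_{ij}$ rather than expanding $(\nabla_{\mathbf{p}})^{*}$ into its $\alpha\mathbf{p}^{\top}$ and $-\nabla_{\mathbf{p}}^{\top}$ parts explicitly as the paper does, which is a mildly cleaner bookkeeping of the same computation.
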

\begin{proof}
Since
\begin{align}
(\nabla_{\mathbf{p}})^{*}\Gamma\nabla_{\mathbf{p}}h = \alpha\mathbf{p}^{\top}\cdot\Gamma\nabla_{\mathbf{p}}h-\sum_{i}\frac{\partial (\Gamma\nabla_{\mathbf{p}}h)_{i}}{\partial p_{i}}=\alpha\mathbf{p}^{\top}\cdot\Gamma\nabla_{\mathbf{p}}h-\big\langle\,\Gamma,\nabla_{\mathbf{p}}\nabla_{\mathbf{p}}h\,\big\rangle_{F},
\end{align}
two equations
\begin{align}
\nabla_{\mathbf{q}}(\nabla_{\mathbf{p}})^{*}\Gamma\nabla_{\mathbf{p}}h=\nabla_{\mathbf{q}}(\alpha\mathbf{p}^{\top}\cdot\Gamma\nabla_{\mathbf{p}}h)-\nabla_{\mathbf{q}}\big\langle\,\Gamma,\nabla_{\mathbf{p}}\nabla_{\mathbf{p}}h\,\big\rangle_{F} 
\end{align}
and
\begin{align}
\nabla_{\mathbf{p}}(\nabla_{\mathbf{p}})^{*}\Gamma\nabla_{\mathbf{p}}h-\alpha\Gamma\nabla_{\mathbf{p}}h=\nabla_{\mathbf{p}}(\alpha\mathbf{p}^{\top}\cdot\Gamma\nabla_{\mathbf{p}}h)-\nabla_{\mathbf{p}}\big\langle\,\Gamma,\nabla_{\mathbf{p}}\nabla_{\mathbf{p}}h\,\big\rangle_{F}-\alpha\Gamma\nabla_{\mathbf{p}}h
\end{align}
holds. First, we consider the left side of desired equation \cref{eq4}. For the term $\nabla_{\mathbf{q}}(\nabla_{\mathbf{p}})^{*}\Gamma\nabla_{\mathbf{p}}h$, recalling the notions of \cref{thm5},
\begin{align}
(\nabla_{\mathbf{q}}(\alpha\mathbf{p}^{\top}\cdot\Gamma\nabla_{\mathbf{p}}h))_{i} =& \alpha\frac{\partial}{\partial q_{i}}\big\{\sum_{j,k}\mathbf{p}_{j}\Gamma_{jk}\frac{\partial h}{\partial p_{k}}\big\}=\alpha\sum_{j,k}\mathbf{p}_{j}\Gamma_{jk}\frac{\partial^{2} h}{\partial p_{k}\partial q_{i}}+\alpha\sum_{j,k}\mathbf{p}_{j}\frac{\Gamma_{jk}}{\partial q_{i}}\frac{\partial h}{\partial p_{k}}\\=&\alpha(\nabla_{\mathbf{p}}\nabla_{\mathbf{q}}h\cdot\Gamma\cdot\mathbf{p})_{i}+\alpha(A_{1}\cdot\mathbf{p})_{i}
\end{align}
and
\begin{align}
(\nabla_{\mathbf{q}}\big\langle\,\Gamma,\nabla_{\mathbf{p}}\nabla_{\mathbf{p}}h\,\big\rangle_{F})_{i} =\sum_{j,k}\Gamma_{jk}\frac{\partial^{3}h}{\partial q_{i}\partial p_{j}\partial p_{k}}+ \sum_{j,k}\frac{\partial \Gamma_{jk}}{\partial q_{i}}\frac{\partial^{2}h}{\partial p_{j}\partial p_{k}}
\end{align}
holds. In addition, for the term 
$\nabla_{\mathbf{p}}(\nabla_{\mathbf{p}})^{*}\Gamma\nabla_{\mathbf{p}}h-\alpha\Gamma\nabla_{\mathbf{p}}h$,
\begin{align}
(\nabla_{\mathbf{p}}(\alpha\mathbf{p}^{\top}\cdot\Gamma\nabla_{\mathbf{p}}h)-\alpha\Gamma\nabla_{\mathbf{p}}h)_{i} =& \alpha(\Gamma\nabla_{\mathbf{p}}h)_{i}+\alpha\sum_{j}\mathbf{p}_{j}\frac{\partial (\Gamma\nabla_{\mathbf{p}}h)_{j}}{\partial p_{i}}-\alpha(\Gamma\nabla_{\mathbf{p}}h)_{i}\\=&\alpha\sum_{j,k}\Gamma_{jk}\frac{\partial^{2} h}{\partial p_{i}\partial p_{k}}\mathbf{p}_{j}=\alpha(\nabla_{\mathbf{p}}\nabla_{\mathbf{p}}h\cdot\Gamma\cdot\mathbf{p})_{i}+\alpha(A_{2}\cdot\mathbf{p})_{i}
\end{align}
and 
\begin{align}
(\nabla_{\mathbf{p}}\big\langle\,\Gamma,\nabla_{\mathbf{p}}\nabla_{\mathbf{p}}h\,\big\rangle_{F})_{i} = \sum_{j,k}\Gamma_{jk}\frac{\partial^{3}h}{\partial p_{i}\partial p_{j}\partial p_{k}}
\end{align}
holds. Next, we compare these terms with the right side of the desired equation \cref{eq4}. Since
\begin{align}
((\nabla_{\mathbf{p}})^{*}(\nabla_{\mathbf{p}}\nabla_{\mathbf{q}}h\cdot\Gamma))_{i} =& \alpha\sum_{j}\mathbf{p}_{j}(\nabla_{\mathbf{p}}\nabla_{\mathbf{q}}h\cdot\Gamma)_{ij}- \sum_{j}\frac{\partial(\nabla_{\mathbf{p}}\nabla_{\mathbf{q}}h\cdot\Gamma)_{ij}}{\partial p_{j}}\\=&\alpha(\nabla_{\mathbf{p}}\nabla_{\mathbf{q}}h\cdot\Gamma\cdot \mathbf{p})_{i}- \sum_{j,k}\frac{\partial^{3}h}{\partial p_{j}\partial p_{k}\partial q_{i}}\Gamma_{kj},
\end{align}
and
\begin{align}
((\nabla_{\mathbf{p}})^{*}(\nabla_{\mathbf{p}}\nabla_{\mathbf{p}}h\cdot\Gamma))_{i} =& \alpha\sum_{j}\mathbf{p}_{j}(\nabla_{\mathbf{p}}\nabla_{\mathbf{p}}h\cdot\Gamma)_{ij}- \sum_{j}\frac{\partial(\nabla_{\mathbf{p}}\nabla_{\mathbf{p}}h\cdot\Gamma)_{ij}}{\partial p_{j}}\\=&\alpha(\nabla_{\mathbf{p}}\nabla_{\mathbf{p}}h\cdot\Gamma\cdot \mathbf{p})_{i}- \sum_{j,k}\frac{\partial^{3}h}{\partial p_{j}\partial p_{k}\partial p_{i}}\Gamma_{kj},  
\end{align}
and
\begin{align}
((\nabla_{\mathbf{p}})^{*}A_{1})_{i}=\alpha\sum_{j}\mathbf{p}_{j}(A_{1})_{ij}- \sum_{j}\frac{\partial(A_{1})_{ij}}{\partial p_{j}}=\alpha(A_{1}\cdot\mathbf{p})_{i}- \sum_{j,k}\frac{\partial \Gamma_{jk}}{\partial q_{i}}\frac{\partial^{2}h}{\partial p_{j}\partial p_{k}}
\end{align}
holds, we prove that the left side and the right side of \cref{eq4} are equal.
\end{proof}
Now, applying the conditions $\Gamma = s\sqrt{\text{Hess}\, \, V}$ and \cref{assu: rescaled} to \cref{thm5}, we obtain the upper bound of the derivative of Lyapunov function as below. We prove the exponential convergence of the Lyapunov function based on this bound. We note that this part is different from the previous research, since $A$ and $B$ become zero when the $\Gamma$ is constant.
\begin{theorem}\label{cor1} We denote two constants $L_{1} =  \frac{bd\alpha+cds^{2}\beta}{4(bc-a^{2})\alpha}(8b^{2}s^{-5}\alpha^{-1}+a^{2}s^{-3}\alpha^{-1})$ and $L_{2}= \frac{1}{4} bds^{-1} \alpha^{-1}+\frac{bd\alpha+cds^{2}\beta}{2(bc-a^{2})}a^{2}s^{-3}\alpha^{-2}$. Then,
\begin{align}
\frac{\dd \mathcal{L}(\rho_{t})}{\dd t}\leq&-2\mathbb{E}_{\pi}\big\{\big\langle\,\nabla_{\mathbf{p}}h,(\alpha^{-1}\Gamma+c(\Gamma)\Gamma- a(\Gamma)\text{Hess}\, V-L_{2}\gamma^{2}I)\nabla_{\mathbf{p}}h\,\big\rangle\\&+\big\langle\,\nabla_{\mathbf{p}}h,(a(\Gamma)\Gamma+c(\Gamma)-b(\Gamma)\text{Hess}\, V)\nabla_{\mathbf{q}}h\,\big\rangle\\&+\big\langle\,\nabla_{\mathbf{q}}h, (a(\Gamma)-L_{1}\gamma^{2}I)\nabla_{\mathbf{q}}h\,\big\rangle\big\}
\end{align}
holds.
\end{theorem}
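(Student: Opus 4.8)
The plan is to start from the exact identity for $\dfrac{\dd\mathcal{L}(\rho_{t})}{\dd t}$ in \cref{thm5}: every term there already appears in the asserted bound except the three that vanish when $\Gamma$ is constant, namely $\alpha^{-1}\langle SA,\nabla_{\mathbf{p}}\nabla_{\mathbf{x}}h\rangle_{F}$, $\alpha^{-1}\langle \nabla_{\mathbf{p}}\nabla_{\mathbf{x}}h\cdot\Gamma,S\nabla_{\mathbf{p}}\nabla_{\mathbf{x}}h\rangle_{F}$ and $\alpha^{-1}\langle B,\nabla_{\mathbf{p}}\nabla_{\mathbf{x}}h\rangle_{F}$, so it suffices to show (pointwise, then take $\mathbb{E}_{\pi}$) that their sum is $\geq -L_{2}\gamma^{2}\|\nabla_{\mathbf{p}}h\|^{2}-L_{1}\gamma^{2}\|\nabla_{\mathbf{q}}h\|^{2}$. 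Writing $h=\rho_{t}/\pi$ and $M=\nabla_{\mathbf{p}}\nabla_{\mathbf{x}}h$, the first step is to treat the middle term as a reservoir: since $\Gamma,S\succeq0$ it equals $\|S^{1/2}M\Gamma^{1/2}\|_{F}^{2}\geq\lambda_{\min}(\Gamma)\|S^{1/2}M\|_{F}^{2}$, and $\Gamma=s\sqrt{\text{Hess}\,V}\succeq sI$ under \cref{assu: rescaled}, so it dominates $s\|S^{1/2}M\|_{F}^{2}\geq0$; more useful is its block-wise lower bound $b\|\Gamma^{-1/2}\nabla_{\mathbf{p}}\nabla_{\mathbf{q}}h\|_{F}^{2}+2a\langle\nabla_{\mathbf{p}}\nabla_{\mathbf{q}}h,\nabla_{\mathbf{p}}\nabla_{\mathbf{p}}h\rangle_{F}+c\|\Gamma^{1/2}\nabla_{\mathbf{p}}\nabla_{\mathbf{p}}h\|_{F}^{2}$ (valid once the commutators of $\Gamma$ with the second derivatives of $h$, which are themselves controlled by $\tfrac{\partial\Gamma}{\partial q_{i}}$, are carried along), since the two cross terms $\langle SA,M\rangle_{F}$ and $\langle B,M\rangle_{F}$ are naturally absorbed against the matching blocks.

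The second step is to estimate $A$ and $B$ by the first-order quantities $\|\nabla_{\mathbf{q}}h\|,\|\nabla_{\mathbf{p}}h\|$. Since $\Gamma$ depends only on $\mathbf{q}$, the lower half $A_{2}$ of $A$ vanishes and the rows of $A_{1}$ are $(\tfrac{\partial\Gamma}{\partial q_{i}}\nabla_{\mathbf{p}}h)^{\top}$, whence $\|A\|_{F}^{2}\leq\big(\sum_{i}\|\tfrac{\partial\Gamma}{\partial q_{i}}\|_{2}^{2}\big)\|\nabla_{\mathbf{p}}h\|^{2}\lesssim ds^{2}\gamma^{2}\|\nabla_{\mathbf{p}}h\|^{2}$ by \cref{assu: rescaled} and $\Gamma=s\sqrt{\text{Hess}\,V}$, and $\|S^{1/2}A\|_{F}^{2}=\langle SA,A\rangle_{F}=b\,\mathrm{tr}(A_{1}^{\top}\Gamma^{-2}A_{1})\leq bs^{-2}\|A_{1}\|_{F}^{2}$. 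For $B$, whose entries are $B_{ij}=-\sum_{k}\tfrac{\partial S_{ik}}{\partial q_{j}}\tfrac{\partial h}{\partial x_{k}}$, I would differentiate $S$ block by block via $\tfrac{\partial}{\partial q_{j}}\Gamma^{-1}=-\Gamma^{-1}(\tfrac{\partial\Gamma}{\partial q_{j}})\Gamma^{-1}$ and $\tfrac{\partial}{\partial q_{j}}\Gamma^{-2}=-\Gamma^{-1}(\tfrac{\partial\Gamma}{\partial q_{j}})\Gamma^{-2}-\Gamma^{-2}(\tfrac{\partial\Gamma}{\partial q_{j}})\Gamma^{-1}$, then bound the pieces with $\|\Gamma^{-1}\|_{2}\leq s^{-1}$ and \cref{assu: rescaled}, obtaining $\|B\|_{F}^{2}\lesssim d\,s^{2}\gamma^{2}\big(b^{2}s^{-6}\|\nabla_{\mathbf{q}}h\|^{2}+a^{2}s^{-4}(\|\nabla_{\mathbf{q}}h\|^{2}+\|\nabla_{\mathbf{p}}h\|^{2})\big)$ together with $\|S^{-1/2}B\|_{F}^{2}\leq\lambda_{\min}(S)^{-1}\|B\|_{F}^{2}$, where $\lambda_{\min}(S)^{-1}\leq\tfrac{b\alpha+c\beta s^{2}}{(bc-a^{2})\alpha}$ follows from the observation that $S$ is diagonalized by the eigenbasis of $\Gamma$, each $2\times2$ block $\bigl[\begin{smallmatrix}b\mu^{-2}&a\mu^{-1}\\a\mu^{-1}&c\end{smallmatrix}\bigr]$ having eigenvalue-product $(bc-a^{2})\mu^{-2}$ and eigenvalue-sum $b\mu^{-2}+c$, with the eigenvalues $\mu$ of $\Gamma$ satisfying $s\leq\mu\leq s\sqrt{\beta/\alpha}$.

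The third step is Cauchy--Schwarz / completion of the square: write $\langle SA,M\rangle_{F}=\langle S^{1/2}A,S^{1/2}M\rangle_{F}$ and $\langle B,M\rangle_{F}=\langle S^{-1/2}B,S^{1/2}M\rangle_{F}$, apply Young's inequality block by block, and choose the parameters so that the second-order ($\nabla_{\mathbf{p}}\nabla_{\mathbf{x}}h$) contributions generated are swallowed by the reservoir of Step 1, leaving only first-order remainders. Tracking all spectral constants then shows that those remainders assemble into $-L_{1}\gamma^{2}\|\nabla_{\mathbf{q}}h\|^{2}-L_{2}\gamma^{2}\|\nabla_{\mathbf{p}}h\|^{2}$ with $L_{1},L_{2}$ as stated: the $b^{2}s^{-5}$ piece of $L_{1}$ comes from the $\Gamma^{-2}$-block of $\tfrac{\partial S}{\partial q_{j}}$, the $a^{2}s^{-3}$ pieces of $L_{1},L_{2}$ from its $\Gamma^{-1}$-block, the $bds^{-1}$ piece of $L_{2}$ from the $A$ term, and the common factor $\tfrac{bd\alpha+cds^{2}\beta}{bc-a^{2}}$ from $\alpha\,\lambda_{\min}(S)^{-1}$. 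Taking $\mathbb{E}_{\pi}$ of the resulting pointwise bound and inserting it into \cref{thm5} gives the claimed estimate.

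I expect the main obstacle to be precisely this bookkeeping: the naive single-reservoir estimate $\langle M\Gamma,SM\rangle_{F}\geq s\|S^{1/2}M\|_{F}^{2}$ is too lossy to absorb both cross terms simultaneously, so one must split the reservoir along the block structure of $S$ and cancel $\langle SA,M\rangle_{F}$ and $\langle B,M\rangle_{F}$ against the matching blocks $b\|\Gamma^{-1/2}\nabla_{\mathbf{p}}\nabla_{\mathbf{q}}h\|_{F}^{2}$, etc., and then check that the first-order constants that survive collapse to exactly $L_{1}$ and $L_{2}$. The estimates of $A$ and $B$ in Step 2 and the final substitution into \cref{thm5} are routine by comparison.
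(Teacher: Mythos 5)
Your proposal is correct and mirrors the paper's proof: it starts from the identity in \cref{thm5}, uses $\langle\nabla_{\mathbf{p}}\nabla_{\mathbf{x}}h\cdot\Gamma,S\nabla_{\mathbf{p}}\nabla_{\mathbf{x}}h\rangle_F\ge s\langle\nabla_{\mathbf{p}}\nabla_{\mathbf{x}}h,S\nabla_{\mathbf{p}}\nabla_{\mathbf{x}}h\rangle_F$ as the reservoir, completes the square to reduce to $-\frac{1}{4s}\langle A,SA\rangle_F-\frac{1}{4s}\langle B,S^{-1}B\rangle_F$, and then bounds these exactly as in your Step 2 (using $A_2=0$, $\sum_{i,j}S_{ij}\le bds^{-2}$, $\Vert S^{-1}\Vert\le\frac{b\alpha+cs^{2}\beta}{(bc-a^{2})\alpha}$, and the explicit blocks of $\partial S/\partial q_j$). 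The only divergence is that the paper does not perform the block-wise splitting of the reservoir you anticipate needing; it accepts the single crude estimate and charges the full $\frac{1}{4s}$ to each of the two cross terms, which, as you suspect, is slightly optimistic (sharing one reservoir between two cross terms should cost $\frac{1}{2s}$ each), though this only rescales $L_1,L_2$ by a constant and does not affect the structure of the result.
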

\begin{proof}
We recall the result from the proof of \cref{thm5}, which is
\begin{align}
\big\langle\,&
\begin{bmatrix}
\nabla_{\mathbf{q}}(\nabla_{\mathbf{p}})^{*}\Gamma\nabla_{\mathbf{p}}h\\
\nabla_{\mathbf{p}}(\nabla_{\mathbf{p}})^{*}\Gamma\nabla_{\mathbf{p}}h-\alpha\Gamma\nabla_{\mathbf{p}}h
\end{bmatrix}
,S\nabla_{\mathbf{x}}h\,\big\rangle+
\big\langle\,\nabla_{\mathbf{p}}\nabla_{\mathbf{x}}h, [S,\nabla_{\mathbf{q}}]\nabla_{\mathbf{x}}h\,\big\rangle_{F}\\&=\big\langle\,
\nabla_{\mathbf{p}}\nabla_{\mathbf{x}}h\cdot\Gamma,S\nabla_{\mathbf{p}}\nabla_{\mathbf{x}}h\,\big\rangle_{F}+\big\langle\,
SA,\nabla_{\mathbf{p}}\nabla_{\mathbf{x}}h\,\big\rangle_{F}+\big\langle\,B,\nabla_{\mathbf{p}}\nabla_{\mathbf{x}}h\,\big\rangle_{F}.
\end{align}
First, since the trace of the product of two positive semidefinite matrices is nonnegative, we obtain 
\begin{equation}
\big\langle\,
\nabla_{\mathbf{p}}\nabla_{\mathbf{x}}h\cdot\Gamma,S\nabla_{\mathbf{p}}\nabla_{\mathbf{x}}h\,\big\rangle_{F}\geq s\big\langle\,
\nabla_{\mathbf{p}}\nabla_{\mathbf{x}}h,S\nabla_{\mathbf{p}}\nabla_{\mathbf{x}}h\,\big\rangle_{F}, 
\end{equation}
so
\begin{align}
\big\langle\,&
\begin{bmatrix}
\nabla_{\mathbf{q}}(\nabla_{\mathbf{p}})^{*}\Gamma\nabla_{\mathbf{p}}h\\
\nabla_{\mathbf{p}}(\nabla_{\mathbf{p}})^{*}\Gamma\nabla_{\mathbf{p}}h-\alpha\Gamma\nabla_{\mathbf{p}}h
\end{bmatrix}
,S\nabla_{\mathbf{x}}h\,\big\rangle+
\big\langle\,\nabla_{\mathbf{p}}\nabla_{\mathbf{x}}h, [S,\nabla_{\mathbf{q}}]\nabla_{\mathbf{x}}h\,\big\rangle_{F}\\
&\geq s\big\langle\,
\nabla_{\mathbf{p}}\nabla_{\mathbf{x}}h,S\nabla_{\mathbf{p}}\nabla_{\mathbf{x}}h\,\big\rangle_{F}+\big\langle\,
SA,\nabla_{\mathbf{p}}\nabla_{\mathbf{x}}h\,\big\rangle_{F}+\big\langle\,B,\nabla_{\mathbf{p}}\nabla_{\mathbf{x}}h\,\big\rangle_{F}\\
&\geq-\frac{1}{4s}\big\langle\,A, SA\,\big\rangle_{F}-\frac{1}{4s}\big\langle\,B,S^{-1}B\,\big\rangle_{F} 
\end{align}
holds. Next, we will analyze the two terms $\big\langle\,A,SA\,\big\rangle_{F}$, $\big\langle\,B,S^{-1}B\,\big\rangle_{F}$ and obtain the upper bound.
For the term $\big\langle\,A,SA\,\big\rangle_{F}$, letting $\mathbf{1}\in\mathbb{R}^{d}$ as matrix of ones,
\begin{align}
\big\langle\,A,SA\,\big\rangle_{F}=&\big\langle\,A^{\top},A^{\top}S^{\top}\,\big\rangle_{F}=\sum_{i,j}S_{ij}\big\langle\,\frac{\partial\Gamma}{\partial x_{i}}\nabla_{\mathbf{p}}h,\frac{\partial\Gamma}{\partial x_{j}}\nabla_{\mathbf{p}}h\,\big\rangle\\
\leq&s^{2}\alpha^{-1}\gamma^{2}\sum_{1\leq i,j\leq d}S_{ij}\big\langle\,\nabla_{\mathbf{p}}h,\nabla_{\mathbf{p}}h\,\big\rangle\leq bd \alpha^{-1}\gamma^{2}\big\langle\,\nabla_{\mathbf{p}}h,\nabla_{\mathbf{p}}h\,\big\rangle
\end{align}
since
\begin{align}
\sum_{1\leq i,j\leq d}S_{ij} = b\mathbf{1}^{\top}\Gamma^{-2}\mathbf{1}\leq bds^{-2}
\end{align}
holds. For the term $\big\langle\,B, S^{-1}B\,\big\rangle_{F}$,
\begin{align}
\big\langle\,B, S^{-1}B\,\big\rangle_{F} = \sum_{j}\big\langle\,\frac{\partial S}{\partial q_{j}}\nabla_{\mathbf{x}}h, S^{-1}\frac{\partial S}{\partial q_{j}}\nabla_{\mathbf{x}}h\,\big\rangle
\end{align}
holds, and since $S^{-1} = \frac{1}{bc-a^{2}}\bigl[\begin{smallmatrix}
c\Gamma^{2} & -a\Gamma\\ -a\Gamma & bI
\end{smallmatrix}\bigr]$
and
\begin{align}
\frac{\partial S}{\partial q_{j}}=
\begin{bmatrix}
-b(\Gamma^{-1}\frac{\partial \Gamma}{\partial q_{j}}\Gamma^{-2}+\Gamma^{-2}\frac{\partial \Gamma}{\partial q_{j}}\Gamma^{-1}) & -a\Gamma^{-1}\frac{\partial \Gamma}{\partial q_{j}}\Gamma^{-1}\\-a\Gamma^{-1}\frac{\partial \Gamma}{\partial q_{j}}\Gamma^{-1}& O
\end{bmatrix},
\end{align}
letting $X = -b(\Gamma^{-1}\frac{\partial \Gamma}{\partial q_{j}}\Gamma^{-2}+\Gamma^{-2}\frac{\partial \Gamma}{\partial q_{j}}\Gamma^{-1})$ and $Y=-a\Gamma^{-1}\frac{\partial \Gamma}{\partial q_{j}}\Gamma^{-1}$, we obtain
\begin{align}
\big\langle\,\frac{\partial S}{\partial q_{j}}\nabla_{\mathbf{x}}h,& S^{-1}\frac{\partial S}{\partial q_{j}}\nabla_{\mathbf{x}}h\,\big\rangle\\\leq& \Vert S^{-1} \Vert\big\langle\,\frac{\partial S}{\partial q_{j}}\nabla_{\mathbf{x}}h, \frac{\partial S}{\partial q_{j}}\nabla_{\mathbf{x}}h\,\big\rangle\leq\frac{b\alpha+cs^{2}\beta}{(bc-a^{2})\alpha}\big\langle\,\frac{\partial S}{\partial q_{j}}\nabla_{\mathbf{x}}h, \frac{\partial S}{\partial q_{j}}\nabla_{\mathbf{x}}h\,\big\rangle\\
=&\frac{b\alpha+cs^{2}\beta}{(bc-a^{2})\alpha}\big\{\big\langle\,X\nabla_{\mathbf{q}}h+Y\nabla_{\mathbf{p}}h, X\nabla_{\mathbf{q}}h+Y\nabla_{\mathbf{p}}h\,\big\rangle+\big\langle\,Y\nabla_{\mathbf{q}}h, Y\nabla_{\mathbf{q}}h\,\big\rangle\big\}\\
\leq&\frac{b\alpha+cs^{2}\beta}{(bc-a^{2})\alpha}\big\{2\big\langle\,X\nabla_{\mathbf{q}}h, X\nabla_{\mathbf{q}}h\,\big\rangle+2\big\langle\,Y\nabla_{\mathbf{p}}h,Y\nabla_{\mathbf{p}}h\,\big\rangle+\big\langle\,Y\nabla_{\mathbf{q}}h, Y\nabla_{\mathbf{q}}h\,\big\rangle\big\}\\
\leq&\frac{b\alpha+cs^{2}\beta}{(bc-a^{2})\alpha}\big\{(2\Vert X \Vert^{2}+\Vert Y \Vert ^{2})\big\langle\,\nabla_{\mathbf{q}}h, \nabla_{\mathbf{q}}h\,\big\rangle+2\Vert Y \Vert^{2}\big\langle\,\nabla_{\mathbf{p}}h,\nabla_{\mathbf{p}}h\,\big\rangle\big\}.
\end{align}
Here, by the properties of the matrix norm, 
\begin{align}
\Vert X \Vert\leq b(\Vert\Gamma^{-1}\frac{\partial \Gamma}{\partial q_{j}}\Gamma^{-2}\Vert+\Vert\Gamma^{-2}\frac{\partial \Gamma}{\partial q_{j}}\Gamma^{-1}\Vert)\leq 2 bs^{-2}\alpha^{-\frac{1}{2}}\gamma \,\,\text{ and }\,\,
\Vert Y \Vert &\leq as^{-1}\alpha^{-\frac{1}{2}}\gamma.
\end{align}
Therefore,
\begin{align}
\big\langle\,B,S^{-1}B\,\big\rangle_{F}\leq& \frac{bd\alpha+cds^{2}\beta}{(bc-a^{2})\alpha}\big\{2a^{2}s^{-2}\alpha^{-1}\gamma^{2}\big\langle\,\nabla_{\mathbf{p}}h,\nabla_{\mathbf{p}}h\,\big\rangle\\&+(8b^{2}s^{-4}\alpha^{-1}\gamma^{2}+a^{2}s^{-2}\alpha^{-1}\gamma^{2})\big\langle\,\nabla_{\mathbf{q}}h, \nabla_{\mathbf{q}}h\,\big\rangle\big\}
\end{align}
holds. Combining these results with  the proof is complete.
\end{proof}
Now, we prove the convergence of the Lyapunov function based on the bound of its derivative. We obtain the bound of the derivative of the Lyapunov function with Lyapunov function itself based on the Poincar\'e's inequality, then prove the exponential convergence.

\begin{theorem}\label{thm13} Recall the Lyapunov function $\mathcal{L}(\rho_{t})$ in Theorem \cref{thm5}, and assume that 
\begin{equation}
    a+c-bs^{-2} = 0,\, \alpha^{-1}+c-as^{-2}>0
\end{equation}
holds. Then, define the function $f$ and $g$ as
\begin{align}
    f(x) \coloneqq \frac{2 ax(\alpha^{-1}+ c-as^{-2})}{(\alpha^{-1}+bx^{2})(\alpha^{-1}+c-as^{-2})+(\alpha^{-1}+c)ax^{2}}
\end{align}
and
\begin{align}
    g(x) \coloneqq \frac{2((\alpha^{-1}+c)L_{1}+(\alpha^{-1}+ bx^{2})L_{2})}{(\alpha^{-1}+bx^{2})(\alpha^{-1}+ c)-a^{2}x^{2}},
\end{align}
and denote $m_{1}= \min \{f(s^{-1}\sqrt{\frac{\alpha}{\beta}}),f(s^{-1})\}$ and $m_{2} = \max \{g(s^{-1}\sqrt{\frac{\alpha}{\beta}}),g(s^{-1})\}$.

Then, when $\gamma$ satisfies $\gamma < \sqrt{\frac{m_{1}}{m_{2}}}$, the Lyapunov function exponentially converges to zero and satisfies
\begin{align}
\mathcal{L}(\rho_{t})\leq e^{-(m_{1}-\gamma^{2} m_{2})t}\mathcal{L}(\rho_{0}).
\end{align}

\end{theorem}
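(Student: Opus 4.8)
The plan is to feed the structural hypotheses into the derivative estimate of \cref{cor1}, reduce the resulting operator inequality to a one-parameter family of $2\times2$ scalar inequalities via a pointwise diagonalization of $\text{Hess}\,V$, and then close the argument with the Poincar\'e inequality to get a self-bounding differential inequality for $\mathcal{L}$. First I would substitute $\Gamma=s\sqrt{\text{Hess}\,V}$ into \cref{cor1}: the hypothesis $a+c-bs^{-2}=0$ makes the cross-term coefficient $a(\Gamma)\Gamma+c(\Gamma)-b(\Gamma)\text{Hess}\,V=(a+c-bs^{-2})I$ vanish, so the estimate collapses to
\begin{align*}
\frac{\dd\mathcal{L}(\rho_{t})}{\dd t}\le-2\,\mathbb{E}_{\pi}\big[\langle\nabla_{\mathbf{q}}h,M_{q}\nabla_{\mathbf{q}}h\rangle+\langle\nabla_{\mathbf{p}}h,M_{p}\nabla_{\mathbf{p}}h\rangle\big],
\end{align*}
with $M_{q}=as^{-1}(\text{Hess}\,V)^{-1/2}-L_{1}\gamma^{2}I$ and $M_{p}=s(\alpha^{-1}+c-as^{-2})(\text{Hess}\,V)^{1/2}-L_{2}\gamma^{2}I$, the hypothesis $\alpha^{-1}+c-as^{-2}>0$ keeping the leading coefficients positive definite. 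Since $\chi^{2}(\rho_{t}\Vert\pi)=\mathrm{Var}_{\pi}(h)$ and $\pi$ satisfies PI with constant $\alpha$ (by \cref{prop5} and \cref{prop33}), one has $\chi^{2}(\rho_{t}\Vert\pi)\le\alpha^{-1}\mathbb{E}_{\pi}\Vert\nabla_{\mathbf{x}}h\Vert^{2}$, hence $\mathcal{L}(\rho_{t})\le\mathbb{E}_{\pi}\big[\langle\nabla_{\mathbf{x}}h,(S+\alpha^{-1}I_{2d})\nabla_{\mathbf{x}}h\rangle\big]$; it therefore suffices to prove the pointwise operator inequality $2\,\mathrm{diag}(M_{q},M_{p})\succeq(m_{1}-\gamma^{2}m_{2})(S+\alpha^{-1}I_{2d})$, since then contracting both sides against $\mathbb{E}_{\pi}[\langle\nabla_{\mathbf{x}}h,\,\cdot\,\nabla_{\mathbf{x}}h\rangle]$ gives $\tfrac{\dd}{\dd t}\mathcal{L}(\rho_{t})\le-(m_{1}-\gamma^{2}m_{2})\mathcal{L}(\rho_{t})$ and Gr\"onwall's lemma yields the claim.

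For the operator inequality, fix $\mathbf{q}$ and diagonalize $\text{Hess}\,V(\mathbf{q})$. Every block above ($M_{q}$, $M_{p}$, the blocks $b\Gamma^{-2},a\Gamma^{-1},cI$ of $S$, and $I$) is a function of $\text{Hess}\,V(\mathbf{q})$ alone, so they share its eigenbasis and the $2d\times2d$ inequality decouples into one $2\times2$ inequality for each eigenvalue $\lambda\in[1,\beta/\alpha]$ of $\text{Hess}\,V(\mathbf{q})$, equivalently for each $x:=s^{-1}\lambda^{-1/2}\in[s^{-1}\sqrt{\alpha/\beta},s^{-1}]$ (the corresponding eigenvalue of $\Gamma^{-1}$). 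Provisionally taking the rate to be $f(x)-\gamma^{2}g(x)$, this $2\times2$ block equals
\begin{align*}
\begin{bmatrix}2(ax-L_{1}\gamma^{2})-r(bx^{2}+\alpha^{-1})&-rax\\-rax&2\big(x^{-1}(\alpha^{-1}+c-as^{-2})-L_{2}\gamma^{2}\big)-r(c+\alpha^{-1})\end{bmatrix},\quad r=f(x)-\gamma^{2}g(x),
\end{align*}
and I would verify it is positive semidefinite by checking its two diagonal entries and its determinant. At $r=f(x)$, $\gamma=0$, the diagonal entries simplify to $\tfrac{2a^{2}x^{3}(\alpha^{-1}+c)}{D}$ and $\tfrac{2(\alpha^{-1}+c-as^{-2})^{2}(\alpha^{-1}+bx^{2})}{xD}$ (with $D$ the denominator of $f$), both positive, and the $\gamma^{2}$-corrections built from $g,L_{1},L_{2}$ add only nonnegative amounts to each; the determinant, viewed as a polynomial in $\gamma^{2}$, has all three coefficients nonnegative---its constant term is $\tfrac{4a^{2}x^{2}(\alpha^{-1}+c-as^{-2})^{2}}{D^{2}}\big((\alpha^{-1}+bx^{2})(\alpha^{-1}+c)-a^{2}x^{2}\big)\ge0$ (by $bc\ge a^{2}$, the positive semidefiniteness of $S$), its $\gamma^{4}$ coefficient simplifies exactly to $4L_{1}L_{2}\ge0$, and its $\gamma^{2}$ coefficient is a sum of positive terms. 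The particular shapes of $f$, $g$ and of the constants $L_{1},L_{2}$ from \cref{cor1} are precisely what make these simplifications occur.

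It remains to replace the $x$-dependent rate $f(x)-\gamma^{2}g(x)$ by the constant $m_{1}-\gamma^{2}m_{2}$. On $(0,\infty)$, $f$ is a positive constant divided by a convex function of $x$, hence unimodal, and $g$ is a ratio of two positive increasing affine functions of $x^{2}$, hence monotone; so over $[s^{-1}\sqrt{\alpha/\beta},s^{-1}]$ the minimum of $f$ and the maximum of $g$ are attained at the endpoints, which is exactly how $m_{1}$ and $m_{2}$ are defined, and therefore $f(x)-\gamma^{2}g(x)\ge m_{1}-\gamma^{2}m_{2}$ on the whole interval. Since $S+\alpha^{-1}I_{2d}\succeq0$, the $2\times2$ inequality for the variable rate then implies the one for the constant rate, which is positive precisely when $\gamma<\sqrt{m_{1}/m_{2}}$; this completes the operator inequality and hence the proof. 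The main obstacle is the explicit algebra in the middle step---verifying that the $2\times2$ determinant's $\gamma$-expansion collapses to the clean nonnegative expressions above, in particular the identity that its $\gamma^{4}$ coefficient equals $4L_{1}L_{2}$---together with the subsidiary bookkeeping of the pointwise simultaneous diagonalization and the endpoint analysis of $f$ and $g$.
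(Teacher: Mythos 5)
Your proposal is correct and follows essentially the same route as the paper: reduce via \cref{cor1} to the pointwise matrix inequality $M_{1}+M_{2}+M_{3}\succeq(m_{1}-\gamma^{2}m_{2})(\alpha^{-1}I+S)$, diagonalize $\text{Hess}\,V$ to decouple into $2\times2$ blocks indexed by the eigenvalues of $\Gamma^{-1}$ in $[s^{-1}\sqrt{\alpha/\beta},s^{-1}]$, verify positive semidefiniteness by a two-by-two criterion, extremize $f$ and $g$ at the endpoints, and close with Poincar\'e and Gr\"onwall. The only (harmless) deviations are that you certify the single combined block with rate $f(x)-\gamma^{2}g(x)$ via its determinant (your $4L_{1}L_{2}$ identity checks out), whereas the paper splits $M_{1}+M_{2}\succeq m_{1}E$ and $M_{3}\succeq-\gamma^{2}m_{2}E$ separately, and you use diagonal entries plus determinant where the paper uses trace plus determinant.
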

\begin{proof}
First, we evaluate the time derivative of the Lyapunov function. By direct computation and \cref{cor1}, we obtain
\begin{align}
\frac{\dd \mathcal{L}(\rho_{t})}{\dd t} \leq -\mathbb{E}_{\pi}\big[\,\big\langle\, \nabla_{\mathbf{x}}\frac{\rho_{t}}{\pi},\,(M_{1}+M_{2}+M_{3})\,\nabla_{\mathbf{x}}\frac{\rho_{t}}{\pi}\,\big\rangle\,\big]
\end{align}
where
\begin{align}
M_{1} = 
\begin{bmatrix}
O & O \\ O & 2\alpha^{-1}\Gamma
\end{bmatrix},\,\,
M_{3} = 
\begin{bmatrix}
-2L_{1}\gamma^{2}I& O\\O & -2L_{2}\gamma^{2}I
\end{bmatrix}
\end{align}
and
\begin{align}
M_{2} = 
\begin{bmatrix}
2a\Gamma^{-1} & (a+c)I-b\Gamma^{-2}\text{Hess}\,V\\
(a+c)I-b\Gamma^{-2}\text{Hess}\,V & 2c\Gamma - 2a\Gamma^{-1}\text{Hess}\,V
\end{bmatrix}
=\begin{bmatrix}
2a\Gamma^{-1}& O\\
O & 2(c-as^{-2})\Gamma  
\end{bmatrix}.
\end{align}
\par Next, in order to apply the Poincar\'e's inequality and the Gronwall's inequality, we denote the matrix 
$E = \alpha^{-1}I+S$ and find a constant $k_{1}$ such that $M_{1}+M_{2}\succeq k_{1} E$. We analyze this matrix
\begin{align}
M_{1}+M_{2}-&k_{1}E = M_{1}+M_{2} - k_{1}(\alpha^{-1}I+S)\\&=
\begin{bmatrix}
2a\Gamma^{-1}-k_{1}\alpha^{-1}I-bk_{1}\Gamma^{-2} & -ak_{1}\Gamma^{-1}\\
-ak_{1}\Gamma^{-1} &2(c-as^{-2}+\alpha^{-1})\Gamma-k_{1}(\alpha^{-1}+c)I
\end{bmatrix}
\end{align}
and find a condition where the eigenvalues are nonnegative. Here, positive definite matrix $\Gamma$ is diagonalized as $\Gamma = UDU^{-1}$ where $U$ is unitary and $D=\text{diag}(e_{1},\dots,e_{d})$ where the entries are the eigenvalues of $\Gamma$. Note that $e_{i}$ satisfies $s\leq e_{i}\leq s\sqrt{\frac{\beta}{\alpha}}$.
Then, the eigenvalues of the matrix $M_{1}+M_{2}-k_{1}E$ are equivalent to eigenvalues of a matrix
\begin{align}
G_{1}=
\begin{bmatrix}
2aD^{-1}-k_{1}\alpha^{-1}I-bk_{1}D^{-2} & -ak_{1}D^{-1}\\
-ak_{1}D^{-1} &2(c-as^{-2})D+ 2\alpha^{-1}D-k_{1}\alpha^{-1}I-ck_{1}I
\end{bmatrix}.
\end{align}
The eigenvalues of the matrix $G_{1}$ are union of the eigenvalues of a matrix $G_{1i}$, which is 
\begin{align}
G_{1i} = 
\begin{bmatrix}
2ae_{i}^{-1}-k_{1}\alpha^{-1}-bk_{1}e_{i}^{-2} & -ak_{1}e_{i}^{-1}\\
-ak_{1}e_{i}^{-1} &2(c-as^{-2})e_{i}+ 2e_{i}\alpha^{-1}-k_{1}\alpha^{-1}-ck_{1}
\end{bmatrix}.
\end{align}
Letting $a =\alpha^{-1}a_{1}, b = \alpha^{-1}b_{1}$, and $c = \alpha^{-1}c_{1}$, eigenvalues of the matrix $G_{1i}$ are positive scalar multiple of a matrix \begin{equation}
G '_{1i} = 
\begin{bmatrix}
2a_{1}e_{i}^{-1}-k_{1}-b_{1}k_{1}e_{i}^{-2} & -a_{1}k_{1}e_{i}^{-1}\\
-a_{1}k_{1}e_{i}^{-1} &2(c_{1}-a_{1}s^{-2})e_{i}+ 2e_{i}-k_{1}-c_{1}k_{1}
\end{bmatrix}.
\end{equation}
Here, for $G_{1i}'$, we denote $a$ for $a_{1}$, $b$ for $b_{1}$, $c$ for $c_{1}$, $k$ for $k_{1}$, and $d$ for $e_{i}^{-1}$ for convenience.  
Then
\begin{equation}
G'_{1i} = 
\begin{bmatrix}
2ad-k(1+bd^{2}) & -kad\\
-kad &2(1+c-as^{-2})d^{-1}-(1+c)k
\end{bmatrix}
=
\begin{bmatrix}
-pk+2q & -tk\\
-tk &-rk+2u
\end{bmatrix}
\end{equation}
where $p = 1+bd^{2}, q = ad, r = 1+c, t = ad$, and $u = (1+c-as^{-2})d^{-1}$ are positive.\par
Thirdly, letting the function $f(d)$ as
\begin{align}
f(d) = \frac{2qu}{pu+qr}(d),
\end{align} we prove that $G'_{1i}$ has two nonnegative eigenvalues when $k = \min_{d\in[s^{-1}\sqrt{\frac{\alpha}{\beta}},s^{-1}]}f(d)$. Note that since the denominator of $f$ is a quadratic function of $d$, $f$ has minimum at endpoints of an interval so $k = \min \{f(s^{-1}\sqrt{\frac{\alpha}{\beta}}),f(s^{-1})\}$ holds. Based on \cref{thm: cayley}, we prove that
\begin{align}
2(q+u)\geq (p+r)k
\end{align}
and
\begin{align}
prk^{2}-2(qr+pu)k+4qu\geq k^{2}t^{2}.
\end{align}
For the first inequality,
\begin{align}
\frac{2(q+u)}{p+r}\geq \frac{2qu}{pu+qr} \geq k
\end{align}
holds. In the case of second inequality, the equation 
\begin{align}
(pr-t^{2})x^{2}-2(qr+pu)x+4qu=0
\end{align}
has two positive solutions. Considering the smaller solution of this equation,
\begin{align}
    \frac{qr+pu-\sqrt{(qr+pu)^{2}-4qu(pr-t^{2})}}{pr-t^{2}} =& \frac{4qu}{qr+pu+\sqrt{(qr+pu)^{2}-4qu(pr-t^{2})}}\\\geq&\frac{2qu}{qr+pu}\geq k,
\end{align}
so the inequality holds since it is a quadratic function of $k$.\par
Fourth, we find ${k}_{2}$ that satisfies the $M_{3}\succeq -k_{2}E$.
The matrix $M_{3}+k_{2}E$ is given as
\begin{align}
G_{2} = M_{3}+k_{2}E = 
\begin{bmatrix}
-2L_{1}\gamma^{2}I+k_{2}\alpha^{-1}I+bk_{2}\Gamma^{-2}& ak_{2}\Gamma^{-1}\\
ak_{2}\Gamma^{-1}& -2L_{2}\gamma^{2}I+k_{2}\alpha^{-1}I+ck_{2}I
\end{bmatrix}
\end{align}
and similarly we consider the matrix
\begin{align}
G'_{2} = 
\begin{bmatrix}
-2\alpha\gamma^{2} L_{1}I+k_{2}I+b_{1}k_{2}\Gamma^{-2}& a_{1}k_{2}\Gamma^{-1}\\
a_{1}k_{2}\Gamma^{-1}& -2\alpha\gamma^{2} L_{2}I+k_{2}I+c_{1}k_{2}I
\end{bmatrix}
\end{align}
where $a =\alpha^{-1}a_{1}, b = \alpha^{-1}b_{1}$ and $c = \alpha^{-1}c_{1}$. Then, its eigenvalues are the union of the eigenvalues of 
\begin{align}
G'_{2i} = 
\begin{bmatrix}
-2\alpha\gamma^{2} L_{1}+k_{2}(1+b_{1}e_{i}^{-2}) & a_{1}e_{i}^{-1}k_{2}\\
a_{1}e_{i}^{-1}k_{2} & -2\alpha\gamma^{2} L_{2}+k_{2}(1+c_{1})
\end{bmatrix},
\end{align}
and we use the notation $a$ for $a_{1}$, $b$ for $b_{1}$, $c$ for $c_{1}$, $k$ for $k_{2}$, and $d$ for $e_{i}^{-1}$ for convenience.  
Then, $G'_{2i}$ is given as 
\begin{align}
G'_{2i} = 
\begin{bmatrix}
-2\alpha\gamma^{2} L_{1}+k(1+bd^{2}) & adk\\
adk & -2\alpha\gamma^{2} L_{2}+k(1+c)
\end{bmatrix}=
\begin{bmatrix}
pk-2q & kt\\kt & rk-2u
\end{bmatrix}
\end{align}
where $p = 1+bd^{2}, q = \alpha\gamma^{2} L_{1}, r = 1+c, t = ad,$ and $u = \alpha\gamma^{2} L_{2}$ are positive coefficients. Then, letting the function $g$ as
\begin{align}
g(d) = \frac{2(pu+qr)}{(pr-t^{2})\gamma^{2}}(d),
\end{align}
we prove $G'_{2i}$ has two nonnegative eigenvalues when $k =\gamma^{2} \max_{d\in[s^{-1}\sqrt{\frac{ \alpha}{\beta}},s^{-1}]}g(d)$. Since $g$ is a rational function of $d^{2}$, $k=\gamma^{2} \max \{g(s^{-1}\sqrt{\frac{\alpha}{\beta}}),g(s^{-1})\}$ holds. Similar to the previous derivation, $k$ has to satisfy
\begin{align}
    pk-2q+rk-2u\geq 0,
\end{align}
and it is proved since
\begin{align}
k \geq \frac{2(pu+qr)}{pr-t^{2}}\geq \frac{2(q+u)}{p+r}.
\end{align}
In addition, $k$ has to satisfy
\begin{align}
(pk-2q)(rk-2u)-t^{2}k^{2} = (pr-t^{2})k^{2}-2(pu+qr)k+4qu\geq 0,
\end{align}
and it is proved since $k$ is larger than the larger solution of the equation
\begin{align}
(pr-t^{2})x^{2}-2(pu+qr)x+4qu=0,
\end{align}
which is 
\begin{align}
    k \geq \frac{2(pu+qr)}{pr-t^{2}}\geq \frac{pu+qr+\sqrt{(pu+qr)^{2}-4qu(pr-t^{2})}}{pr-t^{2}}.
\end{align}
\par Therefore, we proved that there exists constant $k_{1}, k_{2}>0$ that satisfies $M_{1}+M_{2}\succeq k_{1}E$ and $M_{3}\succeq- k_{2}E$. It leads that $M_{1}+M_{2}+M_{3}\succeq (k_{1}-k_{2})E$, and since $k_{1} = m_{1}$ and $k_{2} = \gamma^{2}m_{2}$ the assumption on $\gamma$ gives that $k_{1}>k_{2}$. Letting the cross term of a Lyapunov function as $\mathcal{L}_{\text{cross}}(\rho_{t}) = \mathbb{E}_{\pi}\big[\,\big\langle\, \nabla_{\mathbf{x}}\frac{\rho_{t}}{\pi},\,S\,\nabla_{\mathbf{x}}\frac{\rho_{t}}{\pi}\,\big\rangle\,\big]$, by Poincar\'e's inequality, 
\begin{align}
\frac{\dd \mathcal{L}(\rho_{t})}{\dd t} \leq& -\mathbb{E}_{\pi}\big[\,\big\langle\, \nabla_{\mathbf{x}}\frac{\rho_{t}}{\pi},\,(M_{1}+M_{2}+M_{3})\,\nabla_{\mathbf{x}}\frac{\rho_{t}}{\pi}\,\big\rangle\,\big]\\ \leq& -\mathbb{E}_{\pi}\big[\,\big\langle\, \nabla_{\mathbf{x}}\frac{\rho_{t}}{\pi},\,(k_{1}-k_{2})(\frac{1}{\alpha}I+S)\,\nabla_{\mathbf{x}}\frac{\rho_{t}}{\pi}\,\big\rangle\,\big]\\=&-(m_{1}-\gamma^{2}m_{2})\big[\frac{1}{\alpha}\mathbb{E}\big[\Vert\nabla_{\mathbf{x}}\frac{\rho_{t}}{\pi}\Vert^{2}\big]+\mathcal{L}_{\text{cross}}(\rho_{t})\big]\\\leq&-(m_{1}-\gamma^{2}m_{2})\big[\mathbb{E}_{\pi}\big[(\frac{\rho_{t}}{\pi}-1)^{2}\big]+\mathcal{L}_{\text{cross}}(\rho_{t})\big]\\=&-(m_{1}-\gamma^{2}m_{2})\mathcal{L}(\rho_{t}) 
\end{align}
holds. Then, by Gronwall's inequality, we completes the proof.
\end{proof}
We note that for any constant $s$, there exist coefficients $a, b$, and $c$ that satisfies every assumptions of the theorem, since the positive definite condition of $S$ is equal to $bc-a^{2}>0$. Then, we optimize the constant $s$ and coefficients $a, b$, and $c$ to improve the convergence rate. In our derivation, we obtain the optimal convergence rate when $s = 2$, which coincides with our intuition.

\begin{theorem}\label{cor17}
Recalling the notations of \cref{thm13}, $m_{1}$ is highest when $s = 2$. In particular, $s = 2$, $a = \alpha^{-1}(2x+2)$, $b = \alpha^{-1}(12x+8)$, and $c = \alpha^{-1}x$ satisfies the assumption of \cref{thm13}, and for any $\epsilon>0$, there exists sufficiently large $x>\frac{1}{\sqrt{2}}$ such that $m_{1} = \frac{1}{2}$ holds. Here, $m_{2} = O(\beta^{2}\alpha^{-3}d)$ holds for the worst case, so $m_{2}$ is sufficiently small when $\alpha^{3}\gg \beta^{2}d$.
\end{theorem}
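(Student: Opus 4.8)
The plan is to run the whole optimization inside the two scalar functions $f$ and $g$ already isolated in \cref{thm13}, using the reparametrization $a=\alpha^{-1}a_{1}$, $b=\alpha^{-1}b_{1}$, $c=\alpha^{-1}c_{1}$ (under which $f$ and $g$ carry no explicit $\alpha$, exactly as in the proof of \cref{thm13}), so that the admissibility constraint $a+c-bs^{-2}=0$ becomes the identity $b_{1}=s^{2}(a_{1}+c_{1})$. Since $m_{1}=\min\{f(s^{-1}\sqrt{\alpha/\beta}),f(s^{-1})\}\le f(s^{-1})$, the first step is to bound $f(s^{-1})$ over all admissible parameters. Substituting $b_{1}s^{-2}=a_{1}+c_{1}$ and abbreviating $w=1+c_{1}-a_{1}s^{-2}>0$ (positive by the second assumption of \cref{thm13}), a short manipulation collapses the expression to
\[
f(s^{-1})=\frac{2s^{-1}}{\dfrac{w}{a_{1}}+(1+2s^{-2})+\dfrac{a_{1}s^{-4}}{w}}.
\]
Two applications of AM--GM then finish the step: $\frac{w}{a_{1}}+\frac{a_{1}s^{-4}}{w}\ge 2s^{-2}$ gives $f(s^{-1})\le\frac{2}{s+4s^{-1}}$, and $s+4s^{-1}\ge 4$ gives $f(s^{-1})\le\frac12$, the last inequality being an equality exactly at $s=2$. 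Hence $m_{1}\le\frac12$ for every admissible choice, and any choice attaining $m_{1}=\frac12$ must have $s=2$ and saturate the first AM--GM, i.e. $w=a_{1}s^{-2}$, equivalently $a_{1}=\frac{s^{2}(1+c_{1})}{2}$.

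Next I would check that the stated family is precisely this equality case. With $s=2$ and $c_{1}=x$ the condition $a_{1}=\frac{s^{2}(1+c_{1})}{2}$ forces $a_{1}=2(x+1)$ and then $b_{1}=s^{2}(a_{1}+c_{1})=12x+8$, which is the proposed $a,b,c$. Verifying the assumptions of \cref{thm13} is then elementary: $a+c-bs^{-2}=0$ is an identity, $\alpha^{-1}+c-as^{-2}=\alpha^{-1}(x+1)/2>0$, and the positive-definiteness $bc-a^{2}=4\alpha^{-2}(2x^{2}-1)>0$ holds exactly when $x>1/\sqrt2$. It remains to evaluate $f$ at the two points: $f(s^{-1})=f(1/2)=\frac12$ for every $x$ (from the equality case, or by direct substitution), while $f(s^{-1}\sqrt{\alpha/\beta})=f(y)$ with $y=\frac12\sqrt{\alpha/\beta}\le\frac12$ simplifies to $\frac{2y(x+1)}{\frac12+(8x+6)y^{2}}$, which is non-decreasing in $x$ (its $x$-derivative has the sign of $\frac12-2y^{2}\ge0$) and tends to $\frac12\sqrt{\beta/\alpha}\ge\frac12$ as $x\to\infty$; hence for all sufficiently large $x$ both arguments of the minimum are $\ge\frac12$, so $m_{1}=\frac12$. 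Together with the previous paragraph this gives the claim that $m_{1}$ is maximized — jointly over $s$ and the coefficients — exactly at $s=2$, with maximal value $\frac12$ realized by the stated family. (This $s=2$ is the nonlinear critical-damping choice suggested by the physical intuition.)

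Finally, for the $m_{2}$ bound I would substitute the same family into $g$ and into the constants $L_{1},L_{2}$ of \cref{cor1}, tracking orders in $\alpha,\beta,d$ with $x$ (hence $a_{1},b_{1},c_{1}$) held fixed. One has $bc-a^{2}=\Theta(\alpha^{-2})$; the denominators $(\alpha^{-1}+bx^{2})(\alpha^{-1}+c)-a^{2}x^{2}$ at $x=1/2$ and at $x=\frac12\sqrt{\alpha/\beta}$ are both $\Theta(\alpha^{-2})$ (they compute to $2\alpha^{-2}(x+1)^{2}$ and $\alpha^{-2}(x+1)\bigl(1+(2x+1)\alpha/\beta\bigr)$ respectively); the factor $bd\alpha+cds^{2}\beta=O(d\beta/\alpha)$; and the bracketed factors of $L_{1},L_{2}$ contribute $O(\alpha^{-3})$. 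Combining, $L_{1},L_{2}=O(d\beta\alpha^{-3})$, so $g=O(d\beta\alpha^{-2})\subseteq O(\beta^{2}\alpha^{-3}d)$ at both evaluation points, giving $m_{2}=O(\beta^{2}\alpha^{-3}d)$, which is $\ll1$ precisely when $\alpha^{3}\gg\beta^{2}d$. The load-bearing step is the double AM--GM of the first paragraph: it simultaneously pins the optimal $s$ to $2$, yields the sharp value $\frac12$, and identifies the equality locus that the explicit family realizes. The two things I would be careful to check are that this locus stays inside the region $bc>a^{2}$ (true for $x>1/\sqrt2$) and that the auxiliary evaluation point $f(s^{-1}\sqrt{\alpha/\beta})$ does not fall below $\frac12$ (true once $x$ is large); everything after that is routine bookkeeping.
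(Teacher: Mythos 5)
Your argument is correct and follows essentially the same route as the paper: the same reparametrization $a_{1}=\alpha a$, $b_{1}=\alpha b$, $c_{1}=\alpha c$, the same reduction of $m_{1}$ to a scalar bound (your double AM--GM at the endpoint $k=1$ is the paper's completing-the-square in $t=a_{1}/w$ in slightly cleaner form, and it identifies the same equality locus $s=2$, $a_{1}=2(1+c_{1})$ realized by the stated family), the same verification of the assumptions of \cref{thm13}, and the same order bookkeeping for $g$ and $L_{1},L_{2}$. The one imprecision is holding $x$ fixed in the $m_{2}$ estimate, since forcing $f(s^{-1}\sqrt{\alpha/\beta})\geq\tfrac12$ requires $x$ to grow with the condition number (the paper takes $x=O(\beta\alpha^{-1})$ in the worst case); rerunning your bookkeeping with that scaling still gives $m_{2}=O(\beta^{2}\alpha^{-3}d)$, so the conclusion is unaffected.
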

\begin{proof}
We recall the notations of \cref{thm13} including the function $f$. Define a function $d(k) = ks^{-1}$ on a domain $k\in\{1,\sqrt{\frac{\alpha}{\beta}}\}$, and denote $a_{1}= \alpha a$, $b_{1} = \alpha b$, and $c_{1} = \alpha c$. Then, by the assumption $b_{1} = (a_{1}+c_{1})s^{2}$ and letting $w = 1+c_{1} > a_{1}s^{-2}$,  
\begin{align}
f(d)=&\frac{2a_{1}d(1+c_{1}-a_{1}s^{-2})}{(1+b_{1}d^{2})(1+c_{1}-a_{1}s^{-2})+(1+c_{1})a_{1}d^{2}}\\=&\frac{2a_{1}s^{-1}(1+c_{1}-a_{1}s^{-2})}{(k^{-1}+(a_{1}+c_{1})k)(1+c_{1}-a_{1}s^{-2})+a_{1}(1+c_{1})ks^{-2}}\\=&\frac{2a_{1}s^{-1}(w-a_{1}s^{-2})}{(k^{-1}-k+a_{1}k+k w)(w-a_{1}s^{-2})+a_{1}ks^{-2}w}\\\leq& \frac{2a_{1}s^{-1}(w-a_{1}s^{-2})}{(a_{1}k+k w)(w- a_{1}s^{-2})+a_{1}ks^{-2}w}
\end{align}
holds. Finally, letting $t = \frac{a_{1}}{w}<s^{2}$,
\begin{align}
f(d)\leq &\frac{2t(1-s^{-2}t)s^{-1}}{(kt+k)(1-s^{-2}t)+tks^{-2}} =\frac{2s^{-1}}{k} \frac{t-s^{-2}t^{2}}{1+t-s^{-2}t^{2}}=\frac{2s^{-1}}{k}\frac{\frac{s^{4}}{4}-(\frac{s^{2}}{2}-t)^{2}}{s^{2}+\frac{s^{4}}{4}-(\frac{s^{2}}{2}-t)^{2}}\\ 
\leq & \frac{2s^{-1}}{k}\frac{\frac{s^{4}}{4}}{s^{2}+\frac{s^{4}}{4}} = \frac{2s^{-1}}{k(4s^{-2}+1)}\leq \frac{1}{2k}
\end{align}
holds, and equality holds when $k = 1$, $t = \frac{s^{2}}{2}$, and $s = 2$. When $s = 2$, by the definition of $m_{1}$, $m_{1}= \min \{f(s^{-1}\sqrt{\frac{\alpha}{\beta}}),f(s^{-1})\}\leq \frac{1}{2}$, and when $s\neq 2$, $m_{1}\leq\frac{2s^{-1}}{4s^{-2}+1}<\frac{1}{2}$ holds. \par
Now, we consider the case where $s = 2$, $a = \alpha^{-1}(2x+2)$, $b = \alpha^{-1}(12x+8)$, and $c = \alpha^{-1}x$ for some $x>\frac{1}{\sqrt{2}}$. Since
\begin{align}
a + c = \alpha^{-1}(3x+2) = s^{-2}b\,\, \text{and}\,\,    \alpha^{-1}+c = \alpha^{-1}(x+1) > s^{-2}a
\end{align}
holds, $a, b, c$, and $s$ satisfies the assumption of \cref{thm13}. Then, $f(s^{-1})$ is given as 
\begin{align}
f(\frac{1}{2}) =\frac{\frac{1}{2}(2x+2)(x+1)}{\frac{1}{2}(3x+3)(x+1)+\frac{1}{2}(x+1)(x+1)}  = \frac{1}{2}, 
\end{align}
and letting $\sqrt{\frac{\alpha}{\beta}} = k<1$,
\begin{align}
f(\frac{1}{2}k) = \frac{k(x+1)^{2}}{\frac{1}{2}(1+(3x+2)k^{2})(x+1)+\frac{1}{2}k^{2}(x+1)^{2}}=\frac{1}{2k+\frac{1-k^{2}}{2k^{2}(x+1)}}\geq \frac{1}{2}
\end{align}
holds for sufficiently large $x$, which is $O(\beta\alpha^{-1})$ for the worst case.
Then, since
\begin{align}
g(s^{-1}\sqrt{\frac{\alpha}{\beta}} )=\frac{O(x^{2}\beta\alpha^{-4}d)}{O(x\alpha^{-2})+O(x^{2}\alpha^{-1}\beta^{-1})} = O(\beta^{2}\alpha^{-3}d)    
\end{align}
and
\begin{align}
g(s^{-1}) = \frac{O(x^{2}\beta\alpha^{-4}d)}{O(x^{2}\alpha^{-2})}  = O(\beta\alpha^{-2}d)
\end{align} 
holds, $m_{1} = \frac{1}{2}$ and $m_{2} = O(\beta^{2}\alpha^{-3}d)$ so the proof is complete.
\end{proof}

\begin{corollary}\label{thm20}
Consider the original dynamics \cref{eq: original} where the friction coefficient is given by $\Gamma = 2\sqrt{\text{Hess}\, \, V}$. Then,
\begin{align}
\chi^{2}(\rho_{t}\Vert \pi) \leq C\, e^{-\sqrt{\alpha}(\frac{1}{2}-\gamma^{2}m_{2})t}
\end{align}
for some constant $C$, and $\gamma^{2}m_{2}$ is sufficiently small when $\gamma^{2}\beta^{2}d \ll \alpha^{3}$. 
\end{corollary}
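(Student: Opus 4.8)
The plan is to assemble this corollary from the machinery already developed, with essentially no new computation. First I would set $s = 2$, so that the rescaled dynamics \cref{eq: rescaled} has friction coefficient $\Gamma = 2\sqrt{\text{Hess}\, V}$, and invoke \cref{cor17}: take $a = \alpha^{-1}(2x+2)$, $b = \alpha^{-1}(12x+8)$, $c = \alpha^{-1}x$ with $x$ chosen large enough (of order $\beta\alpha^{-1}$ in the worst case). A one-line computation gives $bc - a^{2} = 4\alpha^{-2}(2x^{2}-1) > 0$ for $x > 1/\sqrt{2}$, so $S$ is positive definite; the identities $a + c - bs^{-2} = 0$ and $\alpha^{-1} + c - as^{-2} > 0$ are exactly those verified inside \cref{cor17}, so all hypotheses of \cref{thm13} hold; and \cref{cor17} furnishes $m_{1} = \tfrac12$ together with $m_{2} = O(\beta^{2}\alpha^{-3}d)$.

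Next I would feed these into \cref{thm13}. Since $m_{2} = O(\beta^{2}\alpha^{-3}d)$, the smallness requirement $\gamma < \sqrt{m_{1}/m_{2}}$ is met as soon as $\gamma^{2}\beta^{2}d$ is small relative to $\alpha^{3}$, and \cref{thm13} then gives exponential decay of the rescaled Lyapunov functional, $\tilde{\mathcal{L}}(\tilde{\rho}_{\tau}) \le e^{-(\frac12 - \gamma^{2}m_{2})\tau}\,\tilde{\mathcal{L}}(\tilde{\rho}_{0})$. Applying \cref{cor3} with $\tilde{S} = S$ and the matrix $P = \bigl[\begin{smallmatrix} I & O \\ O & \sqrt{\alpha}\,I \end{smallmatrix}\bigr]$ transports this bound to the original dynamics, the change of time $\tau = \sqrt{\alpha}\,t$ turning the exponent into $-\sqrt{\alpha}(\tfrac12 - \gamma^{2}m_{2})t$, so that $\mathcal{L}(\rho_{t}) \le e^{-\sqrt{\alpha}(\frac12 - \gamma^{2}m_{2})t}\,\mathcal{L}(\rho_{0})$.

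To finish I would discard the cross term: since $\tilde{S} \succeq 0$ we also have $P^{\top}\tilde{S}P \succeq 0$, hence the Fisher-information-like term in $\mathcal{L}(\rho_{t})$ is nonnegative and $\chi^{2}(\rho_{t}\Vert\pi) \le \mathcal{L}(\rho_{t})$. Combining with the previous bound gives $\chi^{2}(\rho_{t}\Vert\pi) \le C\,e^{-\sqrt{\alpha}(\frac12 - \gamma^{2}m_{2})t}$ with $C = \mathcal{L}(\rho_{0})$, which is finite for admissible initial densities; and $\gamma^{2}m_{2} = \gamma^{2}\,O(\beta^{2}\alpha^{-3}d)$ is small precisely when $\gamma^{2}\beta^{2}d \ll \alpha^{3}$, as claimed.

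The analytic content lies entirely in \cref{thm5}, \cref{cor1}, \cref{thm13} and \cref{cor17}, so there is no real obstacle in this last step; the only points that demand care are (i) checking $bc - a^{2} > 0$ for the chosen coefficients, which is what makes dropping the cross term legitimate, and (ii) the rescaling bookkeeping --- keeping in mind that $\beta$ and $\gamma$ in \cref{sec3} already denote the original constants (the rescaled smoothness $\beta/\alpha$ and rescaled derivative bound $\gamma/\sqrt{\alpha}$ are absorbed inside $L_{1}$, $L_{2}$, $m_{1}$, $m_{2}$), that the rate produced by \cref{thm13} is expressed in the rescaled time $\tau = \sqrt{\alpha}\,t$, and hence that passing through \cref{cor3} multiplies the decay exponent by $\sqrt{\alpha}$.
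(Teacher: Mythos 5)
Your proposal is correct and follows exactly the route the paper intends for this corollary (which it leaves unproved as an immediate assembly of \cref{thm13}, \cref{cor17}, and \cref{cor3}): instantiate the coefficients from \cref{cor17} with $s=2$, verify $bc-a^{2}=4\alpha^{-2}(2x^{2}-1)>0$ and the linear constraints of \cref{thm13}, transport the resulting decay of $\tilde{\mathcal{L}}$ back through the rescaling of \cref{cor3}, and drop the nonnegative cross term to bound $\chi^{2}$ by $\mathcal{L}$. Your bookkeeping remarks on which constants are already expressed in the original (unrescaled) variables match the paper's conventions, so there is nothing to add.
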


Based on this approach, we evaluate the convergence rate of the original dynamics \cref{eq: original} with diagonal quadratic potential. We prove that our approach derives the convergence rate very close to the optimal convergence rate discussed in \cref{sec42}.
\begin{theorem}
Consider the rescaled dynamics \cref{eq: rescaled} with diagonal quadratic constant potential $V(\mathbf{q}) = \sum_{i = 1}^{d}(v_{i}^{2}/2)\mathbf{q}_{i}^{2}$ and friction coefficient $\Gamma = 2\sqrt{\text{Hess}\, \, V}$. Then, for any $\epsilon>0$, there exists $C(\epsilon)$ that satisfies
\begin{align}
\mathcal{L}(\rho_{t}) \leq e^{-(2-\epsilon)t}\mathcal{L}(\rho_{0}).        
\end{align}
Therefore, we have a convergence rate $2-\epsilon$ for any small $\epsilon$.
\end{theorem}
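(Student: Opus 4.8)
The first observation is that for $V(\mathbf{q})=\sum_{i=1}^{d}(v_{i}^{2}/2)\mathbf{q}_{i}^{2}$ the Hessian $\text{Hess}\, V=\mathrm{diag}(v_{1}^{2},\dots,v_{d}^{2})$ is a \emph{constant} diagonal matrix, so $\Gamma=2\sqrt{\text{Hess}\, V}=2\,\mathrm{diag}(v_{1},\dots,v_{d})$ is constant and every derivative $\partial \sqrt{\text{Hess}\, V}/\partial q_{i}$ vanishes. Hence in \cref{thm5} the matrices $A$ and $B$ are identically zero and the formula for $\dd\mathcal{L}/\dd t$ holds \emph{exactly} (no error terms), and moreover the Fokker--Planck operator splits into $d$ independent one-dimensional operators, one for each pair $(q_{i},p_{i})$. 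Both $\chi^{2}$ and the quadratic cross term decompose accordingly, so it suffices to prove the claim for a single one-dimensional subsystem with potential $\tfrac{v^{2}}{2}q^{2}$ and friction $2v$, where (since the potential is $1$-strongly convex in the rescaled variables) $v\ge 1$; I will in fact establish rate $2v-\epsilon\ge 2-\epsilon$ for each subsystem, and then $\mathcal{L}=\sum_{i}\mathcal{L}_{i}$ decays at the worst of these rates.

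For the one-dimensional subsystem the key point is that it is linear--Gaussian and \emph{critically damped}: the deterministic drift on $(q,p)$ is $B=\bigl[\begin{smallmatrix}0&1\\-v^{2}&-2v\end{smallmatrix}\bigr]$, whose characteristic polynomial is $(\lambda+v)^{2}$, so $B$ has the single eigenvalue $-v$ carried by one $2\times 2$ Jordan block. The plan is to run the Lyapunov scheme of \cref{thm5}--\cref{cor1} but with the $2\times 2$ coefficient matrix $S$ \emph{chosen depending on $\epsilon$}. Using that $\pi$ is Gaussian, write $h_{t}=\rho_{t}/\pi$ in its Hermite-degree expansion $L^{2}(\pi)=\bigoplus_{n\ge 0}\mathcal{H}_{n}$; the Fokker--Planck flow preserves this grading, $\chi^{2}(\rho_{t}\Vert\pi)=\sum_{n\ge 1}\Vert P_{n}h_{t}\Vert^{2}$, and on $\mathcal{H}_{n}$ the evolution is a finite-dimensional linear ODE whose generator is the $n$-th symmetric power of (a conjugate of) $B$, i.e.\ a single Jordan block with eigenvalue $-nv$. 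On the bottleneck space $\mathcal{H}_{1}$, writing $h_{t}-1=\xi_{t}^{\top}x$, one has $\dot\xi_{t}=C^{-1}BC\,\xi_{t}$ with $C=\mathrm{Cov}_{\pi}$, $\chi^{2}(\rho_{t}\Vert\pi)=\xi_{t}^{\top}C\xi_{t}$, and the cross term equals $\xi_{t}^{\top}S\xi_{t}$. Since $C^{-1}BC$ has spectral abscissa $-v$, for every $\epsilon>0$ the matrix $C^{-1}BC+(v-\tfrac{\epsilon}{2})I$ is Hurwitz, so the associated Lyapunov equation has a solution $\widetilde P_{\epsilon}\succ 0$ with $(C^{-1}BC)^{\top}\widetilde P_{\epsilon}+\widetilde P_{\epsilon}(C^{-1}BC)\preceq-(2v-\epsilon)\widetilde P_{\epsilon}$; rescaling $\widetilde P_{\epsilon}$ so that $\widetilde P_{\epsilon}\succeq C$ and setting $S_{\epsilon}:=\widetilde P_{\epsilon}-C\succeq 0$ makes $\mathcal{L}_{\epsilon}=\chi^{2}+\mathbb{E}_{\pi}[\langle\nabla_{\mathbf x}h,S_{\epsilon}\nabla_{\mathbf x}h\rangle]$ restrict to $\xi^{\top}\widetilde P_{\epsilon}\xi$ on $\mathcal{H}_{1}$, hence decay there at rate $2v-\epsilon$. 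On $\mathcal{H}_{n}$ with $n\ge 2$ the decay is faster (rate $2nv$) and is handled by the same spectral estimate for the corresponding Jordan block; for the short-time regime, where the block-wise constants are inconvenient, one invokes monotonicity of the $\chi^{2}$-divergence along the Fokker--Planck flow (it is an $f$-divergence with $\pi$ invariant) to close the argument, after which Gronwall gives $\mathcal{L}_{\epsilon}(\rho_{t})\le e^{-(2-\epsilon)t}\mathcal{L}_{\epsilon}(\rho_{0})$.

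The main obstacle is obtaining the \emph{sharp} exponent $2v-\epsilon$ rather than a lossy one: the crude comparison $M_{1}+M_{2}\succeq k(\alpha^{-1}I+S)$ used in \cref{thm13} only yields $k\le\tfrac12$ (a factor-$4$ loss), so one genuinely has to exploit the critically-damped structure, i.e.\ the presence of the $2\times 2$ Jordan block at $-v$, and this is precisely why the symmetric-part / single-matrix-inequality argument of \cref{cor1} must be replaced by the mode-by-mode linear-algebra analysis above. Two technical points require care: (i) verifying that the single $\epsilon$-dependent choice $S_{\epsilon}$ (solving the Jordan Lyapunov equation on $\mathcal{H}_{1}$) does not spoil the faster decay on the higher $\mathcal{H}_{n}$, which is where the stitching with $\chi^{2}$-monotonicity enters; and (ii) tracking the blow-up of the prefactor as $\epsilon\to 0$, which is unavoidable because the nilpotent part forces a $t^{k}e^{-vt}$ behaviour that is only absorbed into $e^{-(v-\epsilon/2)t}$ at the cost of an $\epsilon$-dependent constant.
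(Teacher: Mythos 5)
Your opening observations are right: for this potential $\Gamma$ is constant, so $A=B=0$ in \cref{thm5}, the error block $M_{3}$ vanishes, and the obstruction to the sharp rate is exactly the $2\times 2$ Jordan block at $-v$ (the $te^{-vt}$ behaviour), which is why one only gets $2v-\epsilon$. But the justification for abandoning the matrix-inequality framework is mistaken, and that is where your proof diverges from what is actually needed. The cap $m_{1}\leq\frac{1}{2}$ in \cref{cor17} is a consequence of the normalization $a+c-bs^{-2}=0$ imposed in \cref{thm13}, which forces $M_{2}$ to be block-diagonal; it is not intrinsic to the method. The paper's proof of this theorem simply drops that constraint: it sets $b=2(a+x)$, $c=\frac{1}{2}(a-y)$ with $0<y<x$ and $a$ sufficiently large (all depending on $\epsilon$), keeps the off-diagonal entries $g_{2}(v_{i},a)$ of each $2\times 2$ block, and checks directly that $M_{1}+M_{2}-(2-\epsilon)E\succeq 0$. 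The same Poincar\'e/Gronwall step as in \cref{thm13} then yields $\frac{\dd}{\dd t}\mathcal{L}\leq-(2-\epsilon)\mathcal{L}$, i.e.\ the stated inequality with prefactor exactly $1$. No mode-by-mode spectral analysis is required.

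As written, your alternative route also has concrete gaps. The reduction to one-dimensional subsystems fails because neither $\chi^{2}$ nor the cross term decomposes additively over independent coordinates: for product measures $1+\chi^{2}(\rho\Vert\pi)=\prod_{i}\bigl(1+\chi^{2}(\rho^{(i)}\Vert\pi^{(i)})\bigr)$ is multiplicative, and $\rho_{0}$ need not be a product measure even though the dynamics decouples, so ``$\mathcal{L}=\sum_{i}\mathcal{L}_{i}$'' is not available. The Hermite-grading argument could instead be run directly in $2d$ dimensions, but the two steps you flag as delicate are precisely the ones left unproved: you do not verify that the single quadratic form $S_{\epsilon}$ obtained from the $\mathcal{H}_{1}$ Lyapunov equation controls the higher levels $\mathcal{H}_{n}$, and the proposed ``stitching'' via $\chi^{2}$-monotonicity together with absorbing the polynomial prefactors can only produce $\mathcal{L}(\rho_{t})\leq C_{\epsilon}e^{-(2-\epsilon)t}\mathcal{L}(\rho_{0})$ with $C_{\epsilon}\to\infty$, not the clean differential inequality that the theorem (and \cref{thm21}) rests on. If you want to keep the spectral viewpoint, the explicit Gaussian computation of \cref{theorem19} is the honest version of it; otherwise the efficient fix is to stay inside \cref{thm13} and simply choose $(a,b,c)$ without the constraint $a+c=bs^{-2}$.
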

\begin{proof}
Recall $M_{1}$, $M_{2}$, and $E$ from the proof of \cref{thm13} and prove that there exists $a$, $b$, and $c$ that satisfies $M_{1}+M_{2}-(2-\epsilon)E \succeq 0$. Denote $k = \frac{\epsilon}{2}$, and let $b = 2(a+x)$ and $c = \frac{1}{2}(a-y)$ for convenience. Then, by the similar derivation with \cref{thm13}, we only need to prove that for sufficient $a$, $x$, and $y$,
\begin{align}
G_{1i} = 
\begin{bmatrix}
g_{1}(v_{i}, a) & g_{2}(v_{i}, a)\\ 
g_{2}(v_{i}, a)& g_{3}(v_{i}, a)
\end{bmatrix}
\succeq 0
\end{align}
with
$g_{1}(v_{i}, a) = (v_{i}^{-1} -(1-k)v_{i}^{-2})a- (1-k)(x v_{i}^{-2}+2\alpha^{-1})$, $g_{2}(v_{i}, a) = (1-(1-k)v_{i}^{-1})a - \frac{1}{2}(x+y)$, and $g_{3}(v_{i}, a) = (v_{i}-(1-k))a-(2v_{i}-(1-k))(y-2\alpha^{-1})$ holds for all $i$. 

Now, we let $0<y<x$ and $a$ sufficiently large. Then, $a > 0$, $b = 2(a+x)>0$, $c = \frac{1}{2}(a-y)>0$, and $bc -a^{2} = (x-y)a - xy > 0$ holds, so the assumptions on Lyapunov function holds. In addition, since $v_{i}\geq 1$, $g_{1}(v_{i}, a)$ and $g_{3}(v_{i}, a)$ are positive for sufficiently large $a$. Finally, 
\begin{align}
&g_{1}(v_{i}, a)g_{3}(v_{i}, a)-g_{2}(v_{i}, a)^{2} \\
&= (1-(1-k)v_{i}^{-1})a(x+y-(1-k)(xv_{i}^{-1}+2\alpha^{-1}v_{i})-(2-(1-k)v_{i}^{-1})(y-2\alpha^{-1}))\\
&+(1-k)(x v_{i}^{-2}+2\alpha^{-1})(2v_{i}-(1-k))(y-2\alpha^{-1})-\frac{1}{4}(x+y)^{2}
\end{align}
and
$1-(1-k)v_{i}^{-1}>0$ and 
\begin{align}
    x&+y-(1-k)(xv_{i}^{-1}+2\alpha^{-1}v_{i})-(2-(1-k)v_{i}^{-1})(y-2\alpha^{-1})\\
    &= (1-(1-k)v_{i}^{-1})(x-y)+2\alpha^{-1}(2-(1-k)(v_{i}+v_{i}^{-1})) 
\end{align}
so we could set sufficiently large $a$ and $0<y<x$ that satisfies $g_{1}(v_{i}, a)g_{3}(v_{i}, a) - g_{2}(v_{i}, a)^{2}>0$ for all $i$. It leads that $M_{1}+M_{2}\succeq (2-\epsilon) E$. Therefore, by applying the Poincar\'e's inequality as in \cref{thm13}, the proof is complete. 
\end{proof}
\begin{corollary}\label{thm21}
For the original dynamics \cref{eq: original} with diagonal quadratic potential $V(\mathbf{q}) = \sum_{i = 1}^{d}(v_{i}^{2}/2)\mathbf{q}_{i}^{2}$ and friction coefficient $\Gamma = 2\sqrt{Hess V}$, for any $\epsilon>0$
\begin{align}
\chi^{2}(\rho_{t}\Vert \pi) \leq C(\epsilon)\, e^{-\sqrt{\alpha}(2-\epsilon)t}
\end{align}
holds for some constant $C(\epsilon)$ which is independent of $t$.
\end{corollary}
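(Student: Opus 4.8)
The plan is to obtain \cref{thm21} as an immediate consequence of the preceding theorem (the convergence estimate for the rescaled dynamics with diagonal quadratic potential), the rescaling dictionary of \cref{cor3}, and the elementary bound $\chi^{2}(\rho_{t}\Vert\pi)\le\mathcal{L}(\rho_{t})$. No new estimate is needed; the content is in reassembling these pieces with the correct normalization.

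First I would set up the rescaling. For $V(\mathbf{q})=\sum_{i=1}^{d}(v_{i}^{2}/2)\mathbf{q}_{i}^{2}$ one has $\text{Hess}\,V=\text{diag}(v_{1}^{2},\dots,v_{d}^{2})$, so the strong convexity constant is $\alpha=\min_{i}v_{i}^{2}$. Passing to the rescaled dynamics \cref{eq: rescaled} via \cref{prop5}, the rescaled potential $\tilde{V}=V/\alpha$ is again diagonal quadratic, $\tilde{V}(\mathbf{q})=\sum_{i}(\tilde{v}_{i}^{2}/2)\mathbf{q}_{i}^{2}$ with $\tilde{v}_{i}^{2}=v_{i}^{2}/\alpha\ge 1$; in particular it satisfies the hypothesis $v_{i}\ge 1$ used in the proof of the preceding theorem. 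Here $\Gamma=2\sqrt{\text{Hess}\,V}$ is a constant matrix, so $\gamma=0$ and the $\gamma$-dependent objects ($L_{1}$, $L_{2}$, $A$, $B$) vanish, which is precisely why the sharpened argument of the preceding theorem applies and yields the rate $2-\epsilon$.

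Next I would invoke the preceding theorem: for every $\epsilon>0$ there is a choice of Lyapunov parameters $a,b,c$ (with $bc-a^{2}>0$, e.g.\ $a$ large, $b=2(a+x)$, $c=\tfrac12(a-y)$ for suitable $0<y<x$) such that the rescaled Lyapunov functional satisfies $\tilde{\mathcal{L}}(\tilde{\rho}_{\tau})\le e^{-(2-\epsilon)\tau}\tilde{\mathcal{L}}(\tilde{\rho}_{0})$. Applying \cref{cor3} with $m=2-\epsilon$ transports this to the original time scale, giving $\mathcal{L}(\rho_{t})\le e^{-\sqrt{\alpha}(2-\epsilon)t}\mathcal{L}(\rho_{0})$. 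Finally, since the cross term $\mathbb{E}_{\pi}[\langle\nabla_{\mathbf{x}}(\rho_{t}/\pi),\,P^{\top}\tilde{S}P\,\nabla_{\mathbf{x}}(\rho_{t}/\pi)\rangle]$ is a nonnegative quadratic form ($P^{\top}\tilde{S}P\succeq 0$), we have $\chi^{2}(\rho_{t}\Vert\pi)\le\mathcal{L}(\rho_{t})$, and the claim follows with $C(\epsilon)=\mathcal{L}(\rho_{0})$, which is finite for any initialization of finite Lyapunov value and manifestly independent of $t$.

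Because the heavy lifting is done upstream, there is no genuine obstacle; the only points demanding care are bookkeeping ones. I must check that the rescaled potential still has the diagonal quadratic form with eigenvalues $\ge 1$ so that the preceding theorem literally applies, and that the factor $\sqrt{\alpha}$ is not lost when passing the rate $m=2-\epsilon$ through \cref{cor3}. I would also note explicitly that the $\epsilon$-dependence enters only through $C(\epsilon)=\mathcal{L}(\rho_{0})$, whose size reflects the choice of $a,b,c$ (hence $\tilde{S}$) required to make $M_{1}+M_{2}\succeq(2-\epsilon)E$ in the rescaled picture, so that the exponent degrades continuously as $\epsilon\to 0$ while the prefactor blows up.
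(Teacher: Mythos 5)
Your proposal is correct and follows exactly the route the paper intends: the corollary is stated without its own proof precisely because it is the preceding theorem's bound $\tilde{\mathcal{L}}(\tilde{\rho}_{\tau})\leq e^{-(2-\epsilon)\tau}\tilde{\mathcal{L}}(\tilde{\rho}_{0})$ transported through \cref{cor3} with $m=2-\epsilon$ and combined with $\chi^{2}(\rho_{t}\Vert\pi)\leq\mathcal{L}(\rho_{t})$ from positive semidefiniteness of the cross term. Your bookkeeping of the rescaled eigenvalues $\tilde{v}_{i}^{2}=v_{i}^{2}/\alpha\geq 1$ and of the $\epsilon$-dependence of the prefactor is exactly what the paper's chain of results requires.
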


\section{Comparison with constant scalar friction case}\label{app1}
In this section, we compare the convergence rate with constant scalar friction coefficient case based on the result of \cref{sec3} and \cite{8}. We give explicit convergence rate comparison of the dynamics with specific potential with diagonal Hessian for the special case. 

\subsection{Comparison with DMS hypocoercive estimation result}

Here, convergence rates of original dynamics \cref{eq: original} with different friction coefficients are compared based on the results of \cref{sec3} and \cite{8}. We address \cref{thm16} for the constant scalar friction coefficient case, which is a reformulation of \cite[Theorem 4]{8} and \cite[Theorem B.1]{8} for our assumptions. We refer the discussions in \cite[Remark 1.2]{8} and note that this approach is based on \cite[Theorem 1]{32}. 

\begin{theorem}\label{thm16}(See \cite[Theorem 4]{8}) Consider the case where the friction coefficient is $\Gamma = \lambda I$. Define the function $\lambda_{DMS} = \lambda_{DMS}(\lambda, \alpha, \epsilon)$ as 
\begin{align}
    \lambda_{DMS}(\lambda, \alpha, \epsilon) =\frac{\lambda-\frac{\epsilon}{1+\alpha}-\sqrt{\epsilon^{2}(\sqrt{2}+\frac{\lambda}{2})^{2}+(\lambda-\frac{2\alpha+1}{\alpha+1}\epsilon)^{2}}}{2(1+\vert \epsilon \vert)}.
\end{align}

Then, consider the solution $f(t, \mathbf{q}, \mathbf{p})$ of backward Kolmogorov equation of the original dynamics \cref{eq: original} that satisfies $\int f(0, \mathbf{q}, \mathbf{p})\, \dd \pi = 0$. For $\epsilon \in (-1, 1)$ that satisfies $\lambda_{DMS}(\lambda, \alpha, \epsilon)>0$,
\begin{align}
    \Vert f(t, \mathbf{q}, \mathbf{p})\Vert_{L^{2}(\pi)}\, \leq\, \sqrt{\frac{1+\vert \epsilon \vert}{1-\vert \epsilon \vert}}\,\Vert f(0, \mathbf{q}, \mathbf{p})\Vert_{L^{2}(\pi)}\,e^{-\lambda_{DMS} t}
\end{align}
holds. In addition, by the direct consequence of the above inequality and \cite[Remark 1.2]{8}, 
\begin{align}
    \chi^{2}(\rho_{t}\Vert \pi)\, \leq\, \frac{1+\vert \epsilon \vert}{1-\vert \epsilon \vert}\,\chi^{2}(\rho_{0}\Vert \pi)\,e^{-2\lambda_{DMS} t}
\end{align}
holds.
\end{theorem}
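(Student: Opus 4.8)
The plan is to derive the $L^{2}(\pi)$ estimate from the abstract DMS hypocoercivity criterion \cite[Theorem 1]{32}, in the concrete form of \cite[Theorem 4]{8} and \cite[Theorem B.1]{8}, specialized to \cref{eq: original} with $\Gamma = \lambda I$ under the normalization $\alpha I \preceq \text{Hess}\,V \preceq \beta I$, and then to pass from that semigroup estimate to the $\chi^{2}$ bound. First I would write the backward Kolmogorov generator as $\mathcal{L} = \mathrm{T} - \mathrm{S}$, where $\mathrm{S} = \lambda(\mathbf{p}\cdot\nabla_{\mathbf{p}} - \Delta_{\mathbf{p}}) \succeq 0$ is self-adjoint on $L^{2}(\pi)$ and $\mathrm{T} = \mathbf{p}\cdot\nabla_{\mathbf{q}} - \nabla V(\mathbf{q})\cdot\nabla_{\mathbf{p}}$ is anti-self-adjoint (this $\mathrm{T}$ is, up to the momentum rescaling of \cref{prop5}, the operator $B[\cdot]$ of the Notation section). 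With $\Pi$ the $L^{2}(\pi)$-orthogonal projection onto $\mathbf{q}$-dependent functions and $\mathrm{A} = (1 + (\mathrm{T}\Pi)^{*}\mathrm{T}\Pi)^{-1}(\mathrm{T}\Pi)^{*}$ the DMS auxiliary operator, the working functional is $\mathcal{H}_{\epsilon}[f] = \tfrac12\|f\|_{L^{2}(\pi)}^{2} + \epsilon\langle \mathrm{A}f, f\rangle_{L^{2}(\pi)}$; since $\|\mathrm{A}\| \le \tfrac12$ one has $\tfrac{1-|\epsilon|}{2}\|f\|^{2} \le \mathcal{H}_{\epsilon}[f] \le \tfrac{1+|\epsilon|}{2}\|f\|^{2}$, which is the source of the prefactor $\sqrt{(1+|\epsilon|)/(1-|\epsilon|)}$.

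Next I would verify the two structural hypotheses with explicit constants. Microscopic coercivity, $\langle \mathrm{S}f, f\rangle \ge \lambda\|(1-\Pi)f\|^{2}$, is the Gaussian Poincar\'e inequality in $\mathbf{p}$. Macroscopic coercivity is $\|\mathrm{T}\Pi f\|^{2} \ge \alpha\|(1-\Pi_{0})\Pi f\|^{2}$, with $\Pi_{0}$ the projection onto constants, which is exactly the Poincar\'e inequality for the spatial Gibbs measure $\propto e^{-V(\mathbf{q})}$; this is supplied by the Bakry-Emery criterion \cref{prop33} applied to $U = V$, with constant precisely $\alpha$ since $\text{Hess}\,V \succeq \alpha I$. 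The auxiliary bounds needed by DMS — boundedness of $\mathrm{A}\mathrm{T}(1-\Pi)$ and $\mathrm{A}\mathrm{S}$ on $L^{2}(\pi)$ — are controlled through the $\mathbf{p}$-moments of $\pi$ and through $\lambda$. Feeding $\lambda_{m} = \lambda$ and $\lambda_{M} = \alpha$ into the DMS quadratic-form estimate (differentiate $\mathcal{H}_{\epsilon}$ along the flow, bound the cross terms by a $2\times2$ quadratic form in $\|(1-\Pi)f\|$ and $\|\Pi f\|$, and diagonalize) gives decay of $\mathcal{H}_{\epsilon}$ at rate $2\lambda_{DMS}(\lambda,\alpha,\epsilon)$ whenever the displayed expression is positive; the term $\tfrac{2\alpha+1}{\alpha+1}\epsilon$ and the factor $\sqrt{2}+\tfrac{\lambda}{2}$ are exactly the constants this computation produces. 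Making this constant-tracking match the stated formula verbatim is the step I expect to be most delicate, and is where I would rely directly on \cite[Theorem 4]{8} and \cite[Theorem B.1]{8} rather than reprove it.

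Finally, for the $\chi^{2}$ consequence I would set $f(t) = \rho_{t}/\pi - 1$, so that $\|f(t)\|_{L^{2}(\pi)}^{2} = \chi^{2}(\rho_{t}\|\pi)$ and $\int f(0)\,\dd\pi = 0$. The Fokker-Planck evolution of $\rho_{t}$ becomes the evolution of $f$ under the $L^{2}(\pi)$-adjoint semigroup, generated by $\mathcal{L}^{*} = -\mathrm{S} - \mathrm{T}$; conjugating by the momentum-reversal map $R\colon f(\mathbf{q},\mathbf{p}) \mapsto f(\mathbf{q},-\mathbf{p})$, which is an $L^{2}(\pi)$-isometry because $\pi$ is even in $\mathbf{p}$, gives $R\mathcal{L}^{*}R = \mathcal{L}$, so the estimate above transfers verbatim to $f$. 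Applying it to $f$ and squaring yields $\chi^{2}(\rho_{t}\|\pi) \le \tfrac{1+|\epsilon|}{1-|\epsilon|}\,\chi^{2}(\rho_{0}\|\pi)\,e^{-2\lambda_{DMS}t}$, which is the content of \cite[Remark 1.2]{8}, and completes the proof.
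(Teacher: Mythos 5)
Your proposal is correct and matches the paper's treatment: the paper states this result as a direct import of \cite[Theorem 4]{8} and \cite[Theorem B.1]{8} without reproving it, and your outline of the DMS functional $\mathcal{H}_{\epsilon}$, the coercivity constants $\lambda$ and $\alpha$, and the deferral of the exact constant-tracking to the cited references is exactly what that citation contains. Your passage to the $\chi^{2}$ bound via $f = \rho_{t}/\pi - 1$ and momentum-reversal conjugation is also the standard argument of \cite[Remark 1.2]{8} that the paper invokes.
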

From the above theorem, we have a convergence rate result for constant scalar friction coefficient case. Now, we compare this convergence rate with our result.  
\begin{theorem}\label{thm17}
Recall the convergence rate $2\lambda_{DMS}$ for constant scalar friction coefficient case in \cref{thm16} and denote $\Lambda_{DMS}(\lambda, \alpha) = \sup_{\epsilon\in(-1, 1)}\lambda_{DMS}(\lambda, \alpha, \epsilon)$. In addition, recall the convergence rate $\frac{\sqrt{\alpha}}{2} - O(\alpha^{-\frac{1}{2}}\kappa^{2}\gamma^{2}d)$ for our potential function related friction coefficient case. Then, considering the case of $\beta^{2}\gamma^{2}d\ll \alpha^{3}$, $2\Lambda_{DMS}$ is smaller than $\frac{\sqrt{\alpha}}{2} - O(\alpha^{-\frac{1}{2}}\kappa^{2}\gamma^{2}d)$.   
\end{theorem}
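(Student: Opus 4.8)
The plan is to place the two explicit rate expressions side by side and reduce the comparison to a one-variable estimate on $\lambda_{DMS}$. Since the correction term in our rate satisfies $O(\alpha^{-1/2}\kappa^{2}\gamma^{2}d)=O(\beta^{2}\gamma^{2}d\,\alpha^{-5/2})$, the hypothesis $\beta^{2}\gamma^{2}d\ll\alpha^{3}$ says exactly that this term is $o(\sqrt\alpha)$; hence it suffices to produce an explicit bounded function $\Phi(\alpha)$, independent of $\lambda$, with $2\Lambda_{DMS}(\lambda,\alpha)\le\Phi(\alpha)$ for every $\lambda>0$, satisfying $\Phi(\alpha)=O(\alpha^{2})$ as $\alpha\to0$ and $\sup_{\alpha>0}\Phi(\alpha)<\tfrac12$. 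Granting this, the theorem follows by comparing $\Phi(\alpha)$ against $\tfrac{\sqrt\alpha}{2}$ minus the now-negligible correction.

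First I would localize the supremum defining $\Lambda_{DMS}$. For $\epsilon<0$ one has $\lambda-\tfrac{2\alpha+1}{\alpha+1}\epsilon=\lambda+\tfrac{2\alpha+1}{\alpha+1}|\epsilon|$, so the square root in $\lambda_{DMS}$ is at least $\lambda+\tfrac{2\alpha+1}{\alpha+1}|\epsilon|$ and the numerator of $\lambda_{DMS}$ is at most $|\epsilon|\bigl(\tfrac{1}{1+\alpha}-\tfrac{2\alpha+1}{1+\alpha}\bigr)=-\tfrac{2\alpha}{1+\alpha}|\epsilon|<0$; thus $\Lambda_{DMS}(\lambda,\alpha)=\sup_{\epsilon\in(0,1)}\lambda_{DMS}(\lambda,\alpha,\epsilon)$, and the admissible interval $\epsilon\in(0,1)$ already keeps that variable bounded. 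Since also $\lambda_{DMS}\to0$ as $\epsilon\downarrow0$, the supremum is attained in the interior, so the positivity requirement in \cref{thm16} is genuinely in force.

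The crux is the upper bound on $\lambda_{DMS}$ for $\epsilon\in(0,1)$. Writing $A=\epsilon(\sqrt2+\lambda/2)>0$ and $B=\lambda-\tfrac{2\alpha+1}{\alpha+1}\epsilon>0$ (the region where $\lambda_{DMS}>0$), the globally valid inequality $\sqrt{A^{2}+B^{2}}\ge B+\tfrac{A^{2}}{2B+A}$ and the identity $\lambda-B=\tfrac{2\alpha+1}{\alpha+1}\epsilon$ give
\[
2(1+\epsilon)\,\lambda_{DMS}\ \le\ \frac{2\alpha}{\alpha+1}\,\epsilon\ -\ \frac{\epsilon^{2}(\sqrt2+\lambda/2)^{2}}{2\lambda+\epsilon\bigl(\sqrt2+\lambda/2-\tfrac{2(2\alpha+1)}{\alpha+1}\bigr)}.
\]
For fixed $(\lambda,\alpha)$ the right-hand side is an elementary rational function of $\epsilon$ whose maximum over $(0,1)$ admits an explicit bound which is $O\bigl((\alpha/(\alpha+1))^{2}\bigr)$, hence $O(\alpha^{2})$ for small $\alpha$; minimizing that quantity over $\lambda>0$ relies only on $\min_{\lambda>0}\bigl(\tfrac{2}{\lambda}+\sqrt2+\tfrac{\lambda}{4}\bigr)=2\sqrt2$ (attained at $\lambda=2\sqrt2$), since $\tfrac{(\sqrt2+\lambda/2)^{2}}{\lambda}=\tfrac2\lambda+\sqrt2+\tfrac\lambda4$. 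Tracking the constants yields $\Phi$, whose $\alpha\to\infty$ limit is an explicit number strictly below $\tfrac12$ (numerically $\sup_{\lambda}2\Lambda_{DMS}(\lambda,\alpha)$ is roughly $0.37$ in that limit and considerably smaller at moderate $\alpha$).

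To conclude, split on $\alpha$: for $\alpha$ bounded below, $\tfrac{\sqrt\alpha}{2}-\Phi(\alpha)$ is at least the fixed positive constant $\tfrac{\sqrt{\alpha_{0}}}{2}-\sup_{\alpha}\Phi(\alpha)$ (for any threshold $\alpha_{0}$ with $\sqrt{\alpha_{0}}/2>\sup\Phi$), which dominates the $o(\sqrt\alpha)$ correction once $\beta^{2}\gamma^{2}d\ll\alpha^{3}$ with a suitable implicit constant; for small $\alpha$, $2\Lambda_{DMS}\le\Phi(\alpha)=O(\alpha^{2})$ and the correction is $O(\beta^{2}\gamma^{2}d\,\alpha^{-5/2})$, and $\tfrac{\sqrt\alpha}{2}$ dominates both under the hypothesis. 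In every regime $2\Lambda_{DMS}(\lambda,\alpha)<\tfrac{\sqrt\alpha}{2}-O(\alpha^{-1/2}\kappa^{2}\gamma^{2}d)$ for all $\lambda>0$, as claimed. The hard part is the third step: a crude bound such as $\sqrt{A^{2}+B^{2}}\ge B$ only yields $2\Lambda_{DMS}<\tfrac{\alpha}{\alpha+1}<1$, which is not below $\tfrac12$, so one must keep the $\tfrac{A^{2}}{2B+A}$ correction and, importantly, retain the $\tfrac{2\alpha+1}{\alpha+1}\epsilon$ inside $B$ --- dropping it would send the large-$\alpha$ limit of the bound back above $\tfrac12$ --- and since the maximizing $\epsilon$ is not small (it sits near $0.4$ to $0.5$ for large $\alpha$), any shortcut via a small-$\epsilon$ Taylor expansion must be justified as an honest one-sided inequality rather than taken on faith.
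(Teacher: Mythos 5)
Your overall strategy is the same as the paper's: rationalize the square root so that the numerator of $\lambda_{DMS}$ becomes $\tfrac{2\alpha}{\alpha+1}\epsilon$ minus a term quadratic in $\epsilon$, observe that dropping the quadratic correction only yields $2\Lambda_{DMS}<\tfrac{\alpha}{\alpha+1}$ (which fails against $\tfrac{\sqrt\alpha}{2}$ near $\alpha=1$), and then optimize the corrected one-variable expression. The paper does this by multiplying by the conjugate, bounding the numerator by $\bigl(\tfrac{4\alpha\epsilon}{\alpha+1}-\sqrt2\epsilon^{2}\bigr)\lambda$ and the denominator below by $2(1+\epsilon)\lambda(1+\epsilon/2)$ so that $\lambda$ cancels, and closing with an AM--GM argument giving the uniform bound $\Lambda_{DMS}\le\tfrac{\sqrt\alpha}{4.02}$. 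Your version of the rationalization, however, contains a false step. You assert that $B=\lambda-\tfrac{2\alpha+1}{\alpha+1}\epsilon>0$ throughout the region where $\lambda_{DMS}>0$, and your key inequality $\sqrt{A^{2}+B^{2}}\ge B+\tfrac{A^{2}}{2B+A}$ (equivalently $\sqrt{A^{2}+B^{2}}\le A+B$) requires this. Neither holds: take $\alpha=1000$, $\epsilon=0.3$, $\lambda=0.55$. Then $B\approx-0.050<0$ while the numerator of $\lambda_{DMS}$ is $0.55-0.0003-0.509\approx0.041>0$, so $\lambda_{DMS}>0$; and at this point your claimed upper bound $\tfrac{2\alpha}{\alpha+1}\epsilon-\tfrac{A^{2}}{2B+A}\approx0.599-0.631<0$ sits strictly below the true value $2(1+\epsilon)\lambda_{DMS}\approx0.041$. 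So the central estimate is not merely unjustified on part of the admissible region --- it is false there. The lapse is repairable (when $B<0$ one has $\lambda<\tfrac{2\alpha+1}{\alpha+1}\epsilon$ and the crude bound $\sqrt{A^{2}+B^{2}}\ge A$ already gives $2\lambda_{DMS}\le\tfrac{(2-\sqrt2)\epsilon}{1+\epsilon}\le\tfrac{2-\sqrt2}{2}<0.3$, which only occurs for $\alpha>1+\sqrt2$ where $\tfrac{\sqrt\alpha}{2}>0.7$), but you must add this case. The paper's conjugate form avoids the issue entirely because it never needs a sign on $B$.

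The second, more substantive gap is that the decisive quantitative estimate is asserted rather than proven. Everything in the theorem reduces to showing that your $\Phi(\alpha)$ beats $\tfrac{\sqrt\alpha}{2}$ uniformly with a definite margin; you replace this with the numerical claims ``$\sup_{\alpha}\Phi(\alpha)<\tfrac12$, roughly $0.37$ in the large-$\alpha$ limit'' and ``$\Phi(\alpha)=O(\alpha^{2})$,'' neither of which is derived. As you yourself note, the crude bound lands exactly at $\tfrac12$ when $\alpha=1$, so the entire theorem lives in the constant you have not computed. Moreover, stitching the regimes together needs a single $\alpha$-dependent inequality rather than separate asymptotic statements: the paper supplies exactly this by proving $\sup_{\epsilon}\bigl(\tfrac{4\alpha}{\alpha+1}-\sqrt2\epsilon\bigr)\tfrac{\epsilon}{\epsilon^{2}+3\epsilon+2}\le\tfrac{\sqrt\alpha}{4.02}$ for all $\alpha>0$ via the elementary bound $\alpha\sqrt\alpha+\sqrt\alpha\ge2\alpha$ followed by AM--GM in $\epsilon$, which handles small, moderate, and large $\alpha$ in one stroke. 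Until you carry out an optimization of comparable explicitness, the proof is incomplete precisely at its load-bearing step.
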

\begin{proof}
Denote the set $A\subset (-1, 1)$ as $A =  \{\epsilon\,\vert\, \epsilon \in (-1, 1),\,  \lambda_{DMS} > 0\}$
and note that $A$ is not an empty set and every element of $A$ is positive. Then, 
\begin{align}
    \Lambda_{DMS} &=\sup_{\epsilon\in A}\, \frac{\lambda-\frac{\epsilon}{1+\alpha}-\sqrt{\epsilon^{2}(\sqrt{2}+\frac{\lambda}{2})^{2}+(\lambda-\frac{2\alpha+1}{\alpha+1}\epsilon)^{2}}}{2(1+\epsilon)}\\
    & = \sup_{\epsilon\in A}\, \frac{(\lambda-\frac{\epsilon}{1+\alpha})^{2}-\epsilon^{2}(\sqrt{2}+\frac{\lambda}{2})^{2}-(\lambda-\frac{2\alpha+1}{\alpha+1}\epsilon)^{2}}{2(1+ \epsilon)(\lambda-\frac{\epsilon}{1+\alpha}+\sqrt{\epsilon^{2}(\sqrt{2}+\frac{\lambda}{2})^{2}+(\lambda-\frac{2\alpha+1}{\alpha+1}\epsilon)^{2}})}\\
    &\leq \sup_{\epsilon\in A}\, \frac{(\frac{4\alpha\epsilon}{\alpha+1}-\sqrt{2}\epsilon^{2})\lambda}{2(1+\epsilon)(\lambda-\frac{\epsilon}{1+\alpha}+\epsilon(\sqrt{2}+\frac{\lambda}{2}))}\\
    &\leq \sup_{\epsilon\in A}\, \frac{\frac{4\alpha\epsilon}{\alpha+1}-\sqrt{2}\epsilon^{2}}{2(1+\epsilon)(1+\frac{\epsilon}{2})} = \sup_{\epsilon\in A}\, (\frac{4\alpha}{\alpha + 1}-\sqrt{2}\epsilon) \frac{\epsilon}{\epsilon^{2}+3\epsilon+2}
\end{align}
holds. 

Considering our convergence rate $\frac{\sqrt{\alpha}}{2} - O(\alpha^{-\frac{1}{2}}\kappa^{2}\gamma^{2}d) = \frac{\sqrt{\alpha}}{2}(1 - O(\alpha^{-1}\kappa^{2}\gamma^{2}d))$, it suffices to prove that $2\Lambda_{DMS}\leq \frac{\sqrt{\alpha}}{2.01}$ by the assumption. Therefore, we prove that
\begin{align}
\sup_{\epsilon\in A}\, (\frac{4\alpha}{\alpha + 1}-\sqrt{2}\epsilon) \frac{\epsilon}{\epsilon^{2}+3\epsilon+2}\leq \frac{\sqrt{\alpha}}{4.02}. 
\end{align}
Since
\begin{align}
    (\frac{\epsilon^{2}+3\epsilon+2}{4.02\epsilon}&\sqrt{\alpha}+\sqrt{2}\epsilon)(\alpha+1)\\&\geq \frac{\epsilon^{2}+3\epsilon+2}{4.02\epsilon}(\alpha\sqrt{\alpha}+\sqrt{\alpha})+\sqrt{2}\epsilon\alpha\\&\geq (\frac{\epsilon^{2}+3\epsilon+2}{2.01\epsilon} + \sqrt{2}\epsilon)\alpha=((\frac{1}{2.01}+\sqrt{2})\epsilon+\frac{3}{2.01}+\frac{2}{2.01\epsilon})\alpha \\ &\geq (2\sqrt{(\frac{1}{2.01}+\sqrt{2})(\frac{2}{2.01})}+\frac{3}{2.01})\alpha\geq 4.2 \alpha >4 \alpha, 
\end{align}
the proof is complete. 
\end{proof}

Therefore, we proved that our convergence rate result is better than the rate of any constant scalar friction coefficient case obtained in \cite[Theorem 4]{8} under sufficient condition.

\subsection{Case of potential function with diagonal Hessian}\label{sec42}
We start with the case of original dynamics \cref{eq: original} with diagonal quadratic potential, which is $V(\mathbf{q})= \sum_{i=1}^{d}(v_{i}^{2}/2)\mathbf{q}_{i}^{2}$ for $v_{i}>0$. We prove that the dynamics with friction coefficient $\Gamma = 2\sqrt{\text{Hess}\, \, V}$ has grater or equal convergence rate than the dynamics with friction coefficient $\Gamma = \lambda I$ for every $\lambda> 0$, in case of $\chi^{2}$-divergence. Computing the bound of $\chi^{2}$-divergence based on the explicit solution is not very straightforward, so we contained it in the proof. 

\begin{theorem}\label{theorem19} Consider the original dynamics \cref{eq: original} with the potential $V(\mathbf{q})= \sum_{i=1}^{d}(v_{i}^{2}/2)\mathbf{q}_{i}^{2}$ for all $v_{i}>0$. Then, for every $\lambda > 0$, exponential convergence rate of $\chi^{2}$-divergence is greater or equal in friction coefficient $\Gamma = 2\sqrt{\text{Hess}\, \, V}$ case than $\Gamma = \lambda I$ case.
\end{theorem}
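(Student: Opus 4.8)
The plan is to exploit that for a diagonal quadratic potential the system \cref{eq: original} decouples into $d$ independent two–dimensional linear SDEs, to solve these explicitly, to read off the decay rate of the $\chi^{2}$–divergence from the eigenvalues of the $2\times 2$ drift matrices, and finally to reduce the comparison to an elementary scalar inequality. Since $V(\mathbf{q})=\sum_{i}(v_{i}^{2}/2)\mathbf{q}_{i}^{2}$ gives $\text{Hess}\,V=\text{diag}(v_{1}^{2},\dots,v_{d}^{2})$, both friction matrices under comparison are diagonal and constant, namely $2\sqrt{\text{Hess}\,V}=\text{diag}(2v_{i})$ and $\lambda I$; hence \cref{eq: original} splits into independent Ornstein--Uhlenbeck pairs $(\mathbf{q}_{i},\mathbf{p}_{i})$ with drift matrix $M_{i}=\bigl[\begin{smallmatrix}0&1\\-v_{i}^{2}&-\mu\end{smallmatrix}\bigr]$, where $\mu=2v_{i}$ in the first case and $\mu=\lambda$ in the second, and $\pi=\bigotimes_{i}\pi_{i}$. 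Consequently $\rho_{t}=\bigotimes_{i}\rho_{t}^{(i)}$ and $\chi^{2}(\rho_{t}\Vert\pi)=\prod_{i}\bigl(1+\chi^{2}(\rho_{t}^{(i)}\Vert\pi_{i})\bigr)-1$, so the asymptotic exponential rate of $\chi^{2}(\rho_{t}\Vert\pi)$ equals the minimum of the per–mode rates, and it suffices to analyse a single mode and locate the slowest one.

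For a single mode, starting from a Gaussian initial density (which a linear SDE preserves), $\rho_{t}^{(i)}=\mathcal{N}(m_{t},C_{t})$ with $m_{t}=e^{tM_{i}}m_{0}$ and $C_{t}-C_{\infty}=e^{tM_{i}}(C_{0}-C_{\infty})e^{tM_{i}^{\top}}$, where $C_{\infty}$ is the stationary covariance solving the Lyapunov equation. Substituting into the closed form of the $\chi^{2}$–divergence between two Gaussians and expanding, its leading order is of the size of $\Vert m_{t}\Vert^{2}+\Vert C_{t}-C_{\infty}\Vert^{2}$, hence controlled by $e^{tM_{i}}$; since the eigenvalues of $M_{i}$ are $\tfrac{1}{2}(-\mu\pm\sqrt{\mu^{2}-4v_{i}^{2}})$, the norm of $e^{tM_{i}}$ is of order $t^{j}e^{-\sigma_{i}t}$ with $\sigma_{i}$ the smaller of $-\operatorname{Re}$ of those two eigenvalues and $j=1$ exactly in the critically damped case $\mu=2v_{i}$, $j=0$ otherwise, so that $\chi^{2}(\rho_{t}^{(i)}\Vert\pi_{i})$ decays, up to a polynomial prefactor, at rate $2\sigma_{i}=R(\mu,v_{i})$ with
\[
R(\mu,v_{i})=\begin{cases}\mu, & \mu\le 2v_{i},\\[2pt] \mu-\sqrt{\mu^{2}-4v_{i}^{2}}, & \mu\ge 2v_{i}.\end{cases}
\]
A matching lower bound, showing the rate cannot be larger, follows from Cauchy--Schwarz against a linear test function $\ell$ aligned with the slow eigendirection, $\chi^{2}(\rho_{t}^{(i)}\Vert\pi_{i})\ge(\mathbb{E}_{\rho_{t}^{(i)}}\ell-\mathbb{E}_{\pi_{i}}\ell)^{2}/\operatorname{Var}_{\pi_{i}}(\ell)$, evaluated on a mean–displaced Gaussian. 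For $\mu=2v_{i}$ every mode is critically damped with rate $2v_{i}$, and the slowest mode has $v_{i}^{2}=\alpha$, so the rate for $\Gamma=2\sqrt{\text{Hess}\,V}$ is $2\sqrt{\alpha}$; equivalently, \cref{thm21} already furnishes the upper bound $\chi^{2}(\rho_{t}\Vert\pi)\le C(\epsilon)e^{-\sqrt{\alpha}(2-\epsilon)t}$ for every $\epsilon>0$.

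For $\Gamma=\lambda I$ the overall rate is $\min_{i}R(\lambda,v_{i})$, and since $R(\lambda,\cdot)$ is nondecreasing in its second argument this minimum is attained at the slowest mode, giving rate $R(\lambda,\sqrt{\alpha})$. It then remains only to verify $R(\lambda,\sqrt{\alpha})\le 2\sqrt{\alpha}$ for every $\lambda>0$: for $\lambda\le 2\sqrt{\alpha}$ this is immediate, and for $\lambda\ge 2\sqrt{\alpha}$ one has $\lambda-2\sqrt{\alpha}\le\sqrt{(\lambda-2\sqrt{\alpha})(\lambda+2\sqrt{\alpha})}=\sqrt{\lambda^{2}-4\alpha}$, i.e.\ $R(\lambda,\sqrt{\alpha})=\lambda-\sqrt{\lambda^{2}-4\alpha}\le 2\sqrt{\alpha}$, with equality precisely at $\lambda=2\sqrt{\alpha}$. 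Hence the exponential rate of $\chi^{2}(\rho_{t}\Vert\pi)$ under $\Gamma=\lambda I$ is at most $2\sqrt{\alpha}$, which equals the rate under $\Gamma=2\sqrt{\text{Hess}\,V}$; this proves the claim and exhibits $\lambda=2\sqrt{\alpha}$ as the unique constant scalar friction attaining equality.

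The main obstacle is the per–mode step: extracting a clean rate statement from the explicit Gaussian solution. One must put the two–Gaussian $\chi^{2}$ in a form whose leading behaviour is transparently of order $\Vert m_{t}\Vert^{2}+\Vert C_{t}-C_{\infty}\Vert^{2}$ (the $\chi^{2}$ can be infinite at small times, so this is an asymptotic statement), carefully track the polynomial factor $t$ in the critically damped case so as not to overstate the rate there, and confirm that restricting to Gaussian initial data — together with the mean–displacement example used for the lower bound — suffices for the notion of worst–case exponential convergence rate being compared. The decoupling and the concluding scalar inequality are routine.
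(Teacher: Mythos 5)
Your proposal is correct and follows essentially the same route as the paper's proof: decouple the diagonal quadratic system into $d$ independent two-dimensional Ornstein--Uhlenbeck modes, solve them explicitly as Gaussians, read the per-mode $\chi^{2}$ decay rate off the eigenvalues of the $2\times 2$ drift matrix (with the polynomial prefactor in the critically damped case), take the minimum over modes, and finish with the scalar inequality $\lambda-\sqrt{\lambda^{2}-4\alpha}\le 2\sqrt{\alpha}$. The only additions beyond the paper's argument are the explicit product formula for $\chi^{2}$ over independent modes and the sketched test-function lower bound, both of which only make the same argument slightly more careful.
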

\begin{proof}
Before analyzing the general process, we consider the equation where $\mathbf{q}_{t}$ and $\mathbf{p}_{t}$ are one dimensional variables. For $\lambda, w>0$, we prove that the exponential convergence rate of two-dimensional SDE
\begin{equation}\label{eq7}
\begin{cases}
\dd \mathbf{q}_{t} = \mathbf{p}_{t}\, \dd t \\
\dd\mathbf{p}_{t} = -w^{2}\mathbf{q}_{t}\,\dd t-\lambda\mathbf{p}_{t}\,\dd t + \sqrt{2\lambda}\,\dd\mathbf{W}_{t}
\end{cases}
\end{equation} 
is $\lambda-\sqrt{\lambda^{2}-4w^{2}}$ when $\lambda > 2w$ and
$\lambda$ when $\lambda \leq 2w$. 
Note that the maximum convergence rate is $2w$ when $\lambda = 2w$. The equation \cref{eq7} is equivalent to the form
\begin{align}
\dd \mathbf{X}_{t} = AX_{t}\dd t + \sigma \dd \mathbf{W}_{t}    
\end{align}
where $X_{t}=\bigl[\begin{smallmatrix}\mathbf{q}_{t}\\ \mathbf{p}_{t}\end{smallmatrix}\bigr]$, $A =\bigl[ \begin{smallmatrix}0& 1 \\-w^{2}&-\lambda \end{smallmatrix}\bigr]$, and $\sigma =\bigl[ \begin{smallmatrix}0&0 \\ 0& \sqrt{2 \lambda}\end{smallmatrix}\bigr]$. The solution of this equation is
\begin{align}
    X_{t} = e^{At}X_{0}+M_{t}
\end{align}
where $M_{t}\sim\mathcal{N}(0,\Sigma)$ and
\begin{align}
    \Sigma =\int_{0}^{t}e^{A(t-s)}\sigma\sigma^{\top}e^{A^{\top}(t-s)}ds.
\end{align}\par
First, we consider the case where $\lambda = 2w$. Since $A$ has one eigenvalue $-w$ and $(A+wI)^{2}=0$ holds,
\begin{align}
e^{At}= e^{-wtI}(I+(A+wI)t) =e^{-wt}\begin{bmatrix}1+wt & t\\ -w^{2}t& 1-wt
\end{bmatrix}
\end{align}
and similarly
\begin{align}
e^{A^{\top}t}= e^{-wtI}(I+(A^{\top}+wI)t) =e^{-wt}\begin{bmatrix}1+wt & -w^{2}t\\ t& 1-wt
\end{bmatrix}
\end{align}
holds. Then, by direct computation, the mean of $X_{t}$ is $\mu_{1} = e^{-wt} N_{1}$, covariance matrix is $\Sigma_{1} =\bigl[ \begin{smallmatrix} \frac{1}{w^{2}}& 0 \\ 0 & 1 \end{smallmatrix}\bigr] +e^{-2wt}N_{2}$, and inverse of covariance matrix is 
$\Sigma_{1}^{-1}= \bigl[\begin{smallmatrix} w^{2}&0 \\ 0& 1 \end{smallmatrix}\bigr] +e^{-2wt}N_{3},$
where $N_{1}$, $N_{2}$, and $N_{3}$ is a matrix which elements are bounded as $O(t^{k_{1}})$ for some finite $k_{1}$. Since its stationary distribution $\pi$ is a Gaussian with mean $\mu_{2}=0$ and covariance $\Sigma_{2}=\bigl[\begin{smallmatrix}
\frac{1}{w^{2}}& 0 \\ 0 & 1
\end{smallmatrix}\bigr]$, we obtain
\begin{align}
    \chi^{2}(&\rho_{t}\Vert \pi)+1\\=& \int\frac{\sqrt{\det \Sigma_{2}}}{2\pi \det \Sigma_{1}}e^{-(x-\mu_{1})^{\top}\Sigma_{1}^{-1}(x-\mu_{1})+\frac{1}{2}(x-\mu_{2})^{\top}\Sigma_{2}^{-1}(x-\mu_{2})}\dd x\\=&\int\frac{\sqrt{\det \Sigma_{2}}}{2\pi \det \Sigma_{1}}e^{-(x-(\Sigma_{1}^{-1}-\frac{1}{2}\Sigma_{2}^{-1})^{-1}\Sigma_{1}^{-1}\mu_{1}))^{\top}(\Sigma_{1}^{-1}-\frac{1}{2}\Sigma_{2}^{-1})(x-(\Sigma_{1}^{-1}-\frac{1}{2}\Sigma_{2}^{-1})^{-1}\Sigma_{1}^{-1}\mu_{1})}\\&\cdot e^{((\Sigma_{1}^{-1}-\frac{1}{2}\Sigma_{2}^{-1})^{-1}\Sigma_{1}^{-1}\mu_{1})^{\top}(\Sigma_{1}^{-1}-\frac{1}{2}\Sigma_{2}^{-1})((\Sigma_{1}^{-1}-\frac{1}{2}\Sigma_{2}^{-1})^{-1}\Sigma_{1}^{-1}\mu_{1})}\dd x\\=&1+e^{-2wt}\cdot f(t),
\end{align}
where $f(t)$ is a function bounded as $O(t^{k_{2}})$ for some finite $k_{2}$. Therefore, $\chi^{2}$-divergence converges exponentially with rate $2w$.\par
Second, we consider the case where $\lambda \neq 2w$. We let two distinct eigenvalues as $\alpha_{1} = \frac{-\lambda+\sqrt{\lambda^{2}-4w^{2}}}{2}$ and $\alpha_{2} = \frac{-\lambda-\sqrt{\lambda^{2}-4w^{2}}}{2}$. Then, similarly evaluating the mean and the covariance matrix, we obtain the mean and the covariance of $X_{t}$ as
\begin{align}
\mu_{1}=\frac{1}{\alpha_{2}-\alpha_{1}}
\begin{bmatrix}
\alpha_{2}e^{\alpha_{1}t}-\alpha_{1}e^{\alpha_{2}t} & e^{\alpha_{2}t}-e^{\alpha_{1}t}\\ \alpha_{1}\alpha_{2}(e^{\alpha_{1}t}-e^{\alpha_{2}t})&\alpha_{2}e^{\alpha_{2}t}-\alpha_{1}e^{\alpha_{1}t}   
\end{bmatrix} X_{0}
\end{align}
and
\begin{align}
\Sigma_{1} = \frac{2\lambda}{(\alpha_{1}-\alpha_{2})^{2}}
\begin{bmatrix}
p+2q+r & \alpha_{1}(p+q)+\alpha_{2}(q+r)\\
\alpha_{1}(p+q)+\alpha_{2}(q+r) & \alpha_{1}^{2}p+2\alpha_{1}\alpha_{2}q+\alpha_{2}^{2}r
\end{bmatrix}
\end{align}
where $p = \frac{1}{2\alpha_{1}}(e^{2\alpha_{1}t}-1)$, $q =- \frac{1}{\alpha_{1}+\alpha_{2}}(e^{(\alpha_{1}+\alpha_{2})t}-1)$, and $r = \frac{1}{2\alpha_{2}}(e^{2\alpha_{2}t}-1)$.\par
When $\lambda>2w$, similar to the previous derivation, we obtain $\mu_{1} = e^{\alpha_{1}t} N_{1}$, $\Sigma_{1} = \bigl[\begin{smallmatrix} \frac{1}{w^{2}} & 0\\ 0 & 1
\end{smallmatrix}\bigr]
+e^{2\alpha_{1}t} N_{2}$,
and $\Sigma_{1}^{-1} = \bigl[\begin{smallmatrix}w^{2} & 0\\ 0 & 1\end{smallmatrix}\bigr]+e^{2\alpha_{1}t} N_{3},$
where $N_{1}$, $N_{2}$ and $N_{3}$ is a matrix where the elements are given as $O(t^{k_{3}})$ for some $k_{3}$. Then, $\chi^{2}$-divergence converges exponentially with rate $-2\alpha_{1}=\lambda-\sqrt{\lambda^{2}-4w^{2}}$.\par
When $\lambda<2w$, since the real part of $\alpha_{1}$ and $\alpha_{2}$ is $-\frac{\lambda}{2}$, we can similarly let $\mu_{1} = e^{-\frac{\lambda}{2}t} N_{1}$, $\Sigma_{1} = \bigl[\begin{smallmatrix} \frac{1}{w^{2}} & 0\\ 0 & 1 \end{smallmatrix}\bigr] +e^{-\lambda t} N_{2}$,
and 
$\Sigma_{1}^{-1} = \bigl[\begin{smallmatrix}
w^{2} & 0\\ 0 & 1
\end{smallmatrix}\bigr]
+e^{-\lambda t} N_{3},$
where $N_{1}$, $N_{2}$, and $N_{3}$ is a matrix which elements are given as $O(t^{k_{4}})$ for some $k_{4}$. Then, $\chi^{2}$-divergence converges exponentially with rate $\lambda$.\par
Finally, we consider the original equation 
\begin{equation}
\begin{cases}
\dd \mathbf{q}_{t} = \mathbf{p}_{t}\, \dd t \\
\dd\mathbf{p}_{t} = -A\mathbf{q}_{t}\,\dd t-\Gamma\mathbf{p}_{t}\,\dd t + \sqrt{2\Gamma}\,\dd\mathbf{W}_{t}
\end{cases}
\end{equation}
where $A = \text{diag}(v_{1}^{2},\dots,v_{n}^{2})$ and assume $v_{1}\geq v_{2}\dots\geq v_{n}>0$ without the loss of generalization. Then, for both cases $\Gamma = \lambda I$ and $\Gamma = 2\sqrt{A}$, since we can split the equation to $n$ two-dimensional equations with the variables $(\mathbf{q}_{i},\mathbf{p}_{i})$, convergence rate of $\chi^{2}$-divergence follows the minimum value of the convergence rates of each two-dimensional equations. Therefore, when $\Gamma = 2\sqrt{A}$ we obtain the convergence rate $2 v_{n}$, and when $\Gamma = \lambda I$, the convergence rate is less or equal to $2 v_{n}$ and the equality is obtained when $\lambda = 2 v_{n}$. 
\end{proof}
From this theorem, we obtain that $\Gamma = 2\sqrt{\text{Hess}\, \, V}$ case has greater or equal convergence than $\Gamma = \lambda I$ case and intuitively understand that the condition $\Gamma = 2\sqrt{\text{Hess}\, \, V}$ is pertinent to the convergence of each variables. We present the final theorem which connects the diagonal quadratic potential case to general diagonal potential case.
\begin{theorem}\label{cor_comparison}

    Consider the potential function $V(\mathbf{q})$
    \begin{align}
        V(\mathbf{q}) = \frac{1}{2}\sum_{i = 1}^{d}v_{i}^{2}\mathbf{q}_{i}^{2}+\varepsilon\sum_{i = 1}^{d}f_{i}(\mathbf{q}_{i})
    \end{align}
    where $\varepsilon>0$ and $f_{i}^{(2)}(\mathbf{q}_{i})$ and $f_{i}^{(3)}(\mathbf{q}_{i})$ are bounded for all $i$. Then, there exists $\varepsilon_{0}>0$ such that for all $0<\varepsilon<\varepsilon_{0}$ the convergence rate $\frac{\sqrt{\alpha(\varepsilon)}}{2} - O(\alpha(\varepsilon)^{-\frac{1}{2}}\kappa(\varepsilon)^{2}\gamma(\varepsilon)^{2}d)$ is higher than $2\Lambda_{DMS}(\lambda,\alpha(\varepsilon))$.
\end{theorem}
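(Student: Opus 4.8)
The plan is to reduce \cref{cor_comparison} to \cref{thm17} by checking that, for all sufficiently small $\varepsilon$, the perturbed potential $V$ satisfies \cref{assu: original} with parameters $\alpha(\varepsilon),\beta(\varepsilon),\gamma(\varepsilon)$ for which $\alpha(\varepsilon)$ stays bounded away from $0$, $\kappa(\varepsilon)$ stays bounded, and $\gamma(\varepsilon)\to 0$. Since $V$ is separable in the coordinates, $\text{Hess}\,V(\mathbf q)=\text{diag}\big(v_1^2+\varepsilon f_1^{(2)}(\mathbf q_1),\dots,v_d^2+\varepsilon f_d^{(2)}(\mathbf q_d)\big)$. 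Writing $F=\max_i\sup|f_i^{(2)}|$ and $F'=\max_i\sup|f_i^{(3)}|$, both finite by hypothesis, for every $\varepsilon<\varepsilon_1:=\frac{\min_i v_i^2}{2(F+1)}$ each diagonal entry lies in the fixed interval $\big[\tfrac{1}{2}\min_i v_i^2,\ \max_i v_i^2+F\big]$. Hence $V$ is $\alpha(\varepsilon)$-strongly convex and $\beta(\varepsilon)$-smooth with $\alpha(\varepsilon)\ge\alpha_*:=\tfrac{1}{2}\min_i v_i^2>0$ and $\beta(\varepsilon)\le\beta_*:=\max_i v_i^2+F$, so $\kappa(\varepsilon)=\beta(\varepsilon)/\alpha(\varepsilon)\le\kappa_*:=\beta_*/\alpha_*$ uniformly over $\varepsilon\in(0,\varepsilon_1)$.

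The second step is the estimate of $\gamma(\varepsilon)$, and it is here that the mechanism of the theorem resides: since the quadratic part of $V$ has constant Hessian, the position-dependence of $\sqrt{\text{Hess}\,V}$ comes entirely from the $\varepsilon$-perturbation. Explicitly $\sqrt{\text{Hess}\,V}=\text{diag}\big(\sqrt{v_j^2+\varepsilon f_j^{(2)}(\mathbf q_j)}\big)$, so $\partial\sqrt{\text{Hess}\,V}/\partial q_i$ is the diagonal matrix whose only nonzero entry is the $(i,i)$ one, equal to $\varepsilon f_i^{(3)}(\mathbf q_i)\big/\big(2\sqrt{v_i^2+\varepsilon f_i^{(2)}(\mathbf q_i)}\big)$; taking operator norms gives $\big\|\partial\sqrt{\text{Hess}\,V}/\partial q_i\big\|_2\le\varepsilon F'\big/\big(2\sqrt{\alpha_*}\big)=:C_\gamma\varepsilon$ for all $i$ and all $\mathbf q$, so one may take $\gamma(\varepsilon)=C_\gamma\varepsilon$. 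Consequently $\alpha(\varepsilon)^{-1}\kappa(\varepsilon)^2\gamma(\varepsilon)^2 d\le\alpha_*^{-1}\kappa_*^2 C_\gamma^2 d\,\varepsilon^2=O(\varepsilon^2)$ with an $\varepsilon$-free constant, and in particular $\beta(\varepsilon)^2\gamma(\varepsilon)^2 d\ll\alpha(\varepsilon)^3$ once $\varepsilon$ is small.

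With these bounds in hand I would invoke \cref{thm17} at $(\alpha,\beta,\gamma)=(\alpha(\varepsilon),\beta(\varepsilon),\gamma(\varepsilon))$. Its proof produces the bound $2\Lambda_{DMS}(\lambda,\alpha(\varepsilon))\le\tfrac{\sqrt{\alpha(\varepsilon)}}{2.01}$, valid for every value of $\lambda$ and independent of the remaining parameters, while \cref{thm20} (equivalently the rate recorded in \cref{thm1}, using $m_2=O(\beta^2\alpha^{-3}d)$ from \cref{cor17}) gives for \cref{eq: original} with $\Gamma=2\sqrt{\text{Hess}\,V}$ the convergence rate $\tfrac{\sqrt{\alpha(\varepsilon)}}{2}-O\big(\alpha(\varepsilon)^{-1/2}\kappa(\varepsilon)^2\gamma(\varepsilon)^2 d\big)=\tfrac{\sqrt{\alpha(\varepsilon)}}{2}\big(1-O(\varepsilon^2)\big)$. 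Finally I would pick $\varepsilon_0\le\varepsilon_1$ small enough that this relative correction satisfies $O(\varepsilon^2)<\tfrac{0.01}{2.01}$ for all $\varepsilon<\varepsilon_0$; then the rate above strictly exceeds $\tfrac{\sqrt{\alpha(\varepsilon)}}{2.01}\ge2\Lambda_{DMS}(\lambda,\alpha(\varepsilon))$, which is exactly the assertion of \cref{cor_comparison}.

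The point that demands care — the step I expect to be the main obstacle — is the uniformity of the hidden constants in $\varepsilon$: the $O(\cdot)$ in \cref{thm20} and the quantitative threshold implicit in \cref{thm17} both depend on $\alpha,\beta,d$, so one must prevent them from degrading as $\varepsilon$ varies. This is precisely why the first step is phrased as the two-sided uniform bound $\alpha_*\le\alpha(\varepsilon)\le\beta(\varepsilon)\le\beta_*$ rather than as mere limits as $\varepsilon\to0$: with $\alpha(\varepsilon),\beta(\varepsilon)$ confined to a fixed compact interval and $d$ held fixed, $m_2=O(\beta(\varepsilon)^2\alpha(\varepsilon)^{-3}d)$ is uniformly bounded, so $\gamma(\varepsilon)^2 m_2=O(\varepsilon^2)$ with a genuinely $\varepsilon$-free constant and all the inequalities above can be arranged to hold simultaneously on $(0,\varepsilon_0)$. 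A minor bookkeeping item is that strong convexity of $V$, needed both for the DMS setup in \cref{thm16} and for the Poincar\'e-inequality ingredients, is guaranteed exactly by $\varepsilon<\varepsilon_1$, so no restriction beyond $\varepsilon_0\le\varepsilon_1$ is required.
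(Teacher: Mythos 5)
Your proposal is correct and follows the same reduction as the paper: compute the (diagonal) Hessian of the perturbed potential, observe that $\alpha(\varepsilon)$ and $\beta(\varepsilon)$ stay close to $\min_i v_i^2$ and $\max_i v_i^2$, show $\gamma(\varepsilon)=O(\varepsilon)$ from the explicit derivative of $\sqrt{v_i^2+\varepsilon f_i^{(2)}}$, and then invoke \cref{thm17}. The one place where your execution genuinely differs is the final step: the paper closes the argument by a pure continuity/limiting argument ($\alpha(\varepsilon)\to\alpha$, $\Lambda_{DMS}(\lambda,\alpha(\varepsilon))\to\Lambda_{DMS}(\lambda,\alpha)$, and the limiting inequality $\frac{\sqrt{\alpha}}{2}>2\Lambda_{DMS}(\lambda,\alpha)$ is strict), whereas you instead carry through the uniform quantitative bound $2\Lambda_{DMS}(\lambda,\alpha)\le\frac{\sqrt{\alpha}}{2.01}$ from the proof of \cref{thm17} together with two-sided bounds $\alpha_*\le\alpha(\varepsilon)\le\beta(\varepsilon)\le\beta_*$. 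This buys you something the paper's version does not obviously provide: a threshold $\varepsilon_0$ that is uniform in $\lambda$ and hidden constants that are demonstrably $\varepsilon$-free, whereas the paper's continuity argument, read literally, yields an $\varepsilon_0$ that could depend on $\lambda$ and leaves the uniformity of the $O(\cdot)$ terms implicit. Your version is therefore a slightly sharper rendering of the same proof.
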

\begin{proof}
    Let $\alpha = \min (v_{i}^{2})$ and $\beta = \max(v_{i}^{2})$. Then, by direct computation, 
    \begin{align}
        \text{Hess}\, V(\mathbf{q}) = \text{diag}(v_{1}^{2}+\varepsilon f_{1}^{(2)}(\mathbf{q}_{1}),\dots,v_{d}^{2}+\varepsilon f_{d}^{(2)}(\mathbf{q}_{d}))
    \end{align}
     holds, so $\alpha(\varepsilon)\preceq \text{Hess}\, V(\mathbf{q}) \preceq \beta(\varepsilon)$ where $\alpha(\varepsilon) \to \alpha$ and $\beta(\varepsilon) \to \beta$ as $\varepsilon \to 0$. In addition, 
    \begin{align}
        \frac{\dd (v_{i}^{2}+\varepsilon f_{i}^{(2)}(\mathbf{q}_{i}))^{\frac{1}{2}}}{\dd \mathbf{q}_{i}} = \frac{\varepsilon f_{i}^{(3)}(\mathbf{q}_{i})}{2(v_{i}^{2}+\varepsilon f_{i}^{(2)}(\mathbf{q}_{i}))^{\frac{1}{2}}} = O(\varepsilon),
    \end{align}     
    so $\gamma = \gamma(\varepsilon) = O(\varepsilon)$ holds. It leads that $\frac{\sqrt{\alpha(\varepsilon)}}{2} - O(\alpha(\varepsilon)^{-\frac{1}{2}}\kappa(\varepsilon)^{2}\gamma(\varepsilon)^{2}d)\to \frac{\sqrt{\alpha}}{2}$ as $\varepsilon \to 0$.
    
    Therefore, since $\Lambda_{DMS}(\lambda, \alpha)$ is a continuous function of $\alpha$, $\Lambda_{DMS}(\lambda, \alpha(\varepsilon))\to \Lambda_{DMS}(\lambda, \alpha)$ holds as $\varepsilon\to 0$. Combining the above results and applying the result of \cref{thm17} that $\frac{\sqrt{\alpha}}{2}>2\Lambda_{DMS}(\lambda, \alpha)$, the proof is complete.  
\end{proof}

\printbibliography 

\end{document}